\definecolor{navy}{rgb}{0,0,.5}
\definecolor{darkred}{rgb}{1,0,0}
\def\G{{\mathbf G}}
\def\CC{{\mathbb C}}
\def\RR{{\mathbb R}}
\def\ZZ{{\mathbb Z}}
\def\QQ{{\mathbb Q}}
\def\AA{{\mathbb A}}
\def\PP{{\mathbb P}}
\def\TT{{\mathbb T}}
\def\GG{{\mathbb G}}
\def\Puis{{\mathbb{C} \{\!\{ t \}\!\}}}
\def\Spec{\operatorname{Spec}}
\def\inn{\operatorname{in}}
\def\Trop{\operatorname{Trop}}
\def\trop{\operatorname{trop}}
\def\Hilb{\operatorname{Hilb}}
\def\oPic{\operatorname{\overline{Pic}}}
\def\Jac{\operatorname{Jac}}
\def\oJac{\operatorname{\mathscr{J}}}
\def\an{\mathrm{an}}
\def\cC{{\mathscr C}}
\def\cU{{\mathcal U}}
\def\cX{{\mathcal X}}
\def\cY{{\mathcal Y}}
\def\cZ{{\mathcal Z}}
\def\<{{\langle}}
\def\>{{\rangle}}
\DeclareMathOperator{\eu}{eu}
\DeclareMathOperator{\val}{val}
\newcommand{\ignore}[1]{}
\numberwithin{equation}{section}
\newtheorem{thm}[equation]{Theorem}
\newtheorem{prop}[equation]{Proposition}
\newtheorem{lem}[equation]{Lemma}
\newtheorem{cor}[equation]{Corollary}
\newtheorem{conj}[equation]{Conjecture}
\newtheorem*{thm*}{Theorem}
\newtheorem{thm2}{Theorem}[section]
\theoremstyle{definition}
\newtheorem{defn}[equation]{Definition}
\newtheorem{ex}[equation]{Example}
\theoremstyle{remark}
\newtheorem{rem}[equation]{Remark}
\newtheorem{examples}[thm2]{Examples}
\newenvironment{conjtag}[2]{\vspace{5pt} \noindent {\bf Conjecture #1.} {\it #2}}{}
\newcommand{\spe}{\mathrm{sp}}
\newcommand{\VF}{\mathrm{VF}}
\newcommand{\Var}{\mathrm{Var}}
\newcommand{\Vol}{\mathrm{Vol}}
\newcommand{\RES}{\mathrm{RES}}
\newcommand{\rec}{\mathrm{rec}}
\def\Q{\mathbb{Q}}
\def\LL{{\mathbb L}}
\begin{document}
\title{Tropical refined curve counting via motivic integration}
\author{Johannes Nicaise}
\author{Sam Payne}
\author{Franziska Schroeter}
\address{Johannes Nicaise, Imperial College,
Department of Mathematics, South Kensington Campus,
London SW72AZ, UK, and KU Leuven, Department of Mathematics, Celestijnenlaan 200B, 3001 Heverlee, Belgium} \email{j.nicaise@imperial.ac.uk}
\address{Sam Payne, Mathematics Department, Yale University, New Haven, CT 06511,USA}
\email{sam.payne@yale.edu}
\address{Franziska Schroeter, Fachbereich Mathematik (AD), Universit\"at Hamburg, Bundesstr. 55, 20146 Hamburg, Germany}
\email{franziska.schroeter@uni-hamburg.de}

\begin{abstract}
  We propose a geometric interpretation of Block and G\"{o}ttsche's refined tropical curve counting invariants in terms of virtual $\chi_{-y}$ specializations of motivic measures of semialgebraic sets in  relative Hilbert schemes.  We prove that this interpretation is correct for linear series of genus 1, and in arbitrary genus after specializing from $\chi_{-y}$-genus to Euler characteristic.
\end{abstract}

\maketitle

\vspace{-20pt}

\section{Introduction} \label{sec:intro}

Geometers have developed an array of sophisticated techniques for counting special curves in linear series, from degenerations and cobordism to stability conditions on derived categories.  Our paper examines the relationship between two relatively naive approaches to curve counting in toric surfaces, one using Euler characteristics of relative compactified Jacobians and Hilbert schemes, and the other using tropical geometry.  We are especially motivated by ``refined" versions of these curve counts, in which Euler characteristics are replaced with $\chi_{-y}$-genera\footnote{The $\chi_{-y}$-genus of a smooth projective complex variety $X$ is the polynomial $\chi_{-y}(X) = \sum_{q} (-1)^{q}\chi(X,\Omega^q_X)y^q$. This definition extends uniquely to an invariant $\chi_{-y}$ for arbitrary complex varieties that is additive with respect to finite partitions into subvarieties, and which specializes to the Euler characteristic by setting $y=1$. Some authors instead use a ``normalized" $\chi_{-y}$-genus, given by $y^{-\dim(X)/2} \chi_{-y}(X)$, which is a Laurent polynomial in $y^{1/2}$ that is symmetric under $y^{1/2} \mapsto y^{-1/2}$. Beware that this normalized $\chi_{-y}$-genus is not additive and, hence, does not extend to arbitrary varieties in any direct way.  See \S\ref{ss:chiy} for further details.} and combinatorially defined tropical multiplicities are replaced with polynomials, or Laurent polynomials, in a variable $y$ that specialize to the ordinary multiplicities by setting $y =1$.  Our aim is to give a geometric interpretation for the Block--G\"ottsche refined tropical multiplicities from \cite{BG14}, which, on a few specific toric surfaces and conjecturally much more generally, can be used to express G\"ottsche and Shende's refined curve counting invariants from \cite{GS14} as a sum over tropical curves. We expect that our geometric interpretation will be useful to prove the conjectured refined correspondence theorem; we will outline the strategy in Section \ref{ss:conj}.

Our approach is inspired by the growing web of connections between tropical geometry, Berkovich spaces, and motivic integration.  We associate a semialgebraic set in the relative compactified Jacobian or relative Hilbert scheme of points to each tropical curve, and relate combinatorially defined tropical multiplicities to motivic invariants of these semialgebraic sets, using the theory of motivic integration of Hrushovski and Kazhdan \cite{HK06}, together with recent results on the $\ell$-adic cohomology of locally closed semialgebraic sets \cite{Mar14, HL15}.

\subsection{Block and G\"ottsche's refined tropical multiplicities}\label{ss:BG}

Let $Y(\Delta)$ be the projective toric surface associated to a lattice polygon $\Delta$, and denote by $|L(\Delta)|$ the corresponding complete linear series.  Suppose that $\Delta$ has $g$ interior lattice points and $n+1$ total lattice points.  Then the linear series $|L(\Delta)|$ has dimension $n$, its general member is a smooth projective curve of genus $g$, and the locus of irreducible $\delta$-nodal curves in $|L(\Delta)|$ has codimension $\delta$, for $0 \leq \delta \leq g$.  We write $n^{\Delta, \delta}$ for the corresponding \emph{toric Severi degree}, the number of integral $\delta$-nodal curves in $|L(\Delta)|$ that pass through $n - \delta$ points in general position.

These toric Severi degrees can be computed tropically, as the number of plane tropical curves $\Gamma$ of degree $\Delta$ and genus $g - \delta$ passing through $n - \delta$ points in general position, counted with combinatorially defined integer multiplicities $n(\Gamma)$.  These ordinary tropical multiplicities may be interpreted geometrically through the tropical--nonarchimedean correspondence theorems \cite{NS06, Nis09, Gro11, Tyo12}, as follows.  Let $K=\Puis$ be the field of Puiseux series, fix $n - \delta$ algebraic points over $K$ whose tropicalizations are the given $n - \delta$ tropical points in general position, and let $|L| \subset |L(\Delta)|$ be the linear series of curves over $K$ passing through these points.  Then $n(\Gamma)$ is the number of $\delta$-nodal curves in $|L|$ whose tropicalization is $\Gamma$.

The ordinary tropical multiplicity $n(\Gamma)$ can also be expressed combinatorially as a product over the trivalent vertices in the tropical curve, with positive integer factors.
Recently, Block and G\"ottsche have introduced a \emph{refined tropical multiplicity} $N(\Gamma)$, which is a Laurent polynomial in a single variable $y$, and which is expected to form the tropical counterpart of the refined curve counting invariants of G\"ottsche and Shende \cite{GS14}. It may be expressed similarly to the ordinary tropical multiplicity, as a product over trivalent vertices, and specializes to $n(\Gamma)$ by setting $y = 1$ \cite{BG14}.  We will recall the precise definition in \S\ref{sec:tropicalcounting}. As mentioned above, our goal is to give a geometric interpretation for these combinatorial invariants $N(\Gamma)$, generalizing the nonarchimedean-tropical correspondence theorems for ordinary multiplicities. Our approach relies on a suitable generalization of the refined curve counts of G\"ottsche and Shende to families of curves over a semialgebraic base. This requires us to define the $\chi_{-y}$-genus of a semialgebraic set, for which we use the theory of motivic integration developed by Hrushovski
 and Kazhdan \cite{HK06}. Other interpretations of the Block--G\"ottsche invariants in terms of wall-crossings and geometry of real curves are presented in \cite{FS15} and \cite{Mik16}, respectively; it would be interesting to understand the precise relation with the construction that we present here.

\subsection{The refined curve counts of G\"ottsche and Shende}
We begin with rational curve counting invariants (the case where $\delta = g$), following the well-known approach via Euler characteristics of relative compactified Jacobians.  Recall that, if $\cC \rightarrow U$ is a family of integral curves on a smooth surface with finitely many rational fibers, and if all rational fibers have only nodal singularities, then the number of rational fibers is equal to $\eu(\oJac(\cC))$, the Euler characteristic of the relative compactified Jacobian \cite{YZ96, Bea99, FGS99}.  This follows easily from the fact that the compactified Jacobian of each rational fiber has Euler characteristic 1, and the compactified Jacobian of each of the other fibers has Euler characteristic 0.

There is an analogous approach to counting $\delta$-nodal curves via Euler characteristics when $\delta$ is not necessarily equal to $g$, using the relative Hilbert scheme of points rather than the relative compactified Jacobian. Given family of curves $\cC\to U$ and a nonnegative integer $i$, let $\Hilb^i(\cC)$ be the associated relative Hilbert scheme of points, parameterizing families of zero-dimensional subschemes of length $i$ in the fibers. There exists a unique sequence of integers $(n_r(\cC))_{r\geq 0}$ satisfying
\[
q^{1-g} \sum_{i = 0}^\infty \, \eu(\Hilb^i(\cC)) \, q^i = \sum_{r = 0}^\infty n_r(\cC)\, q^{r+1-g}(1-q)^{2g-2r-2}.
\]
It was shown in \cite{GV98, PT10, KST11} that if $\cC \rightarrow U$ is a family of reduced curves of arithmetic genus $g$ on a smooth surface with finitely many $\delta$-nodal fibers, in which all other fibers have geometric genus greater than $g - \delta$, then $n_r(\cC)$ vanishes for $r>\delta$, and the number of $\delta$-nodal curves is equal to $n_{\delta} (\cC)$.\footnote{Be aware that in the three references cited above, the indices are permuted by $r \mapsto g-r$.  We follow the notation of \cite{GS14}, in which $n_r$ is, roughly speaking, a virtual count of curves of cogenus $r$.} The geometric hypotheses on the fibers of the family are satisfied in many natural situations, e.g. for $\delta$-dimensional families of integral curves satisfying mild positivity conditions on a smooth rational surface \cite[Proposition~2.1]{Har86}, and for a general linear series of dimension $\delta$ in the complete linear series of a $\delta$-very ample line bundle on any smooth projective surface \cite[Proposition~2.1]{KST11}.

 In \cite{GS14}, G\"ottsche and Shende have proposed refined curve counting invariants using a similar generating series approach, replacing Euler characteristics with $\chi_{-y}$-genera.  They observe that the generating series
\[
q^{1-g} \sum_{i=0}^\infty \, \chi_{-y} (\Hilb^i(\cC)) \, q^i
\]
can be expressed uniquely as a sum
\[
\sum_{r = 0}^\infty N_r(\cC) \, q^{r+1-g}(1-q)^{g-r-1}(1-qy)^{g-r-1},
\]
where each coefficient $N_r(\cC)$ is a polynomial in $y$ that specializes to $n_r(\cC)$ by setting $y = 1$. G\"ottsche and Shende show that $N_r(\cC)$ vanishes for $r > g$ when $\cC\to U$ is a family of integral curves on a smooth surface and $\Hilb^i(\cC)$ is smooth for all $i$ \cite[Proposition~42]{GS14}. They conjecture that $N_r(\cC)$ also vanishes for $r > \delta$ under the additional assumption that $U \cong \PP^\delta$ and $\cC$ is the universal family of a linear system of curves on a smooth projective surface \cite[Conjecture~5]{GS14}.   The main complication in working with $\chi_{-y}$ instead of the Euler characteristic is that the $\chi_{-y}$-genus of a family cannot be computed by integrating over the base, because of the monodromy action on the induced variation of Hodge stuctures. This is apparent, for instance, in Example \ref{ex:cubic}, below.

\subsection{Linear series on toric surfaces}\label{ss:conj}

We return to the set-up of \S\ref{ss:BG}. Let $\Delta$ be a lattice polygon in $\RR^2$ with $n+1$ lattice points and $g$ interior lattice points. We denote by $(Y(\Delta),L(\Delta))$ the associated polarized toric surface over the field of Puiseux series $\Puis$. The complete linear series $|L(\Delta)|$ has dimension $n$, and its general member is a smooth projective curve of genus $g$. We fix an integer $\delta$, with $0\leq \delta\leq g$. Let $S$ be a set of $n-\delta$ closed points on the dense torus in $Y(\Delta)$ whose tropicalization $\trop(S)\subset \RR^2$ is in general position, and let $|L| \subset |L(\Delta)|$ be the linear series of curves passing through $S$. We denote by $\cC\to |L|$ the universal curve over $|L|\cong \PP^{\delta}$.

First, assume that $\delta=g$, and let $\Gamma\subset \RR^2$ be a rational tropical curve of degree $\Delta$ through the points of $\trop(S)$. As we will explain in \S\ref{sec:fixtrop}, the curves in $|L|$  whose tropicalizations are equal to $\Gamma$ form a semialgebraic set $|L|_\Gamma$ in $|L|$.  Let $\cC_\Gamma$ be the universal curve over $|L|_\Gamma$ and let $\oJac(\cC_\Gamma)$ be the relative compactified Jacobian.  In other words, $\cC_\Gamma$ is the preimage of $|L|_\Gamma$ in the universal curve $\cC \rightarrow |L|$ and, similarly, $\oJac(\cC_\Gamma)$ is the preimage of $|L|_\Gamma$ in the relative compactified Jacobian $\oJac(\cC\times_{|L|}U) \rightarrow U$, where $U \subset |L|$ is the open subvariety parameterizing integral curves  (in \S\ref{sec:integral}, we show that $|L|_\Gamma$ is contained in $U$). We define the Euler characteristic and $\chi_{-y}$-genus of these semialgebraic sets as specializations of the Hrushovski--Kazhdan motivic volume from \cite{HK06} (see Definitions \ref{def:chiy} and \ref{def:eu}).  We conjecture the following geometric interpretation of the Block-G\"ottsche multiplicity $N(\Gamma)$ for counting rational curves.

\begin{conj} \label{conj:rational}
Assume that $\delta=g$ and let $\Gamma\subset \RR^2$ be a rational tropical curve of degree $\Delta$ through the points of $\trop(S)$. Then the Block--G\"ottsche refined tropical multiplicity $N(\Gamma)$ is equal to $y^{-g} \chi_{-y}(\oJac(\cC_\Gamma))$.
\end{conj}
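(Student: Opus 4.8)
\medskip

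\noindent\textbf{Strategy for a proof.}
The plan is to reduce the computation of $\chi_{-y}(\oJac(\cC_\Gamma))$ to a product of local contributions indexed by the vertices of $\Gamma$, and to match each with a factor of the Block--G\"ottsche formula. Recall from \S\ref{sec:tropicalcounting} that $N(\Gamma)=\prod_v[m_v]_y$, a product over the trivalent vertices $v$ of the quantum integers $[m_v]_y=(y^{m_v/2}-y^{-m_v/2})/(y^{1/2}-y^{-1/2})$, where $m_v$ is the lattice multiplicity of $v$. Since $[m]_y=y^{-(m-1)/2}\,\chi_{-y}(\PP^{m-1})$ and a rational tropical curve of degree $\Delta$ through $\trop(S)$ satisfies $\sum_v(m_v-1)=2g$, Conjecture \ref{conj:rational} is equivalent to the assertion that
\[
\chi_{-y}(\oJac(\cC_\Gamma))\;=\;\prod_v\chi_{-y}(\PP^{m_v-1}),
\]
the half-integer powers of $y$ in $N(\Gamma)$ arising solely from the normalization $y^{-g}$. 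Concretely, one must prove that in the Grothendieck ring underlying the Hrushovski--Kazhdan motivic volume the semialgebraic set $\oJac(\cC_\Gamma)$ has class $\prod_v[\PP^{m_v-1}]$, the vertices with $m_v=1$ contributing a point.

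First, I would build a toric degeneration of $(Y(\Delta),L(\Delta))$ over $\Spec\cO_K$ adapted to the dual subdivision of $\Gamma$, in the spirit of the correspondence theorem \cite{NS06}: its special fibre has one component $Y(\Delta_v)$, the toric surface of the dual lattice triangle of $v$ of normalized area $m_v$, for each vertex, glued along its double locus over the bounded edges. Using this model I would present $|L|_\Gamma$ --- and with it $\cC_\Gamma$, whose members are integral with exactly $g$ nodes, so that $|L|_\Gamma\subset U$ as claimed in \S\ref{sec:integral} --- as assembled from ``local'' semialgebraic pieces indexed by the cells of the dual subdivision.

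Next, I would descend to the relative compactified Jacobian. For $C$ rational with $g$ nodes, $\oJac(C)$ compactifies $\Jac(C)\cong(\GG_m)^g$, with combinatorics governed by the dual graph of $C$, and the degeneration refines this along the cells of the dual subdivision. The heart of the argument is a \emph{motivic factorization}: the class of $\oJac(\cC_\Gamma)$ is a product over the vertices of $\Gamma$ of local relative compactified Jacobians $\oJac(\cC_{\Gamma,v})$, the bounded edges and the multiplicity-one vertices contributing trivially. Granting it, the conjecture reduces, vertex by vertex, to showing that $[\oJac(\cC_{\Gamma,v})]=[\PP^{m_v-1}]$ --- the $\delta=g=m_v-1$ instance of the same problem on the single surface $Y(\Delta_v)$. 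For small $m_v$ this is covered by, or closely parallels, the genus-one case of Conjecture \ref{conj:rational} proved in this paper; for larger $m_v$ one expects either an inductive ``one node at a time'' degeneration inside $Y(\Delta_v)$ reducing to the $m_v=2$ building block, or a direct computation of the motivic volume of the compactified Jacobian of a curve of Newton polygon $\Delta_v$.

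The main obstacle is the motivic factorization in the previous step. It is exactly the monodromy phenomenon emphasized above --- $\chi_{-y}$ of a family is \emph{not} the integral over its base of the fibrewise values --- that rules out a naive fibrewise argument and forces us to work throughout with the Hrushovski--Kazhdan motivic volume \cite{HK06}, which remembers the monodromy, and to prove an honest identity of motivic classes of semialgebraic sets. This requires a sufficiently explicit model of $\oJac(\cC_\Gamma)$, obtained from a simultaneous (toric) stable reduction of $\cC_\Gamma$ compatible with the combinatorics of $\Gamma$, in which both $|L|_\Gamma$ and the relative Jacobian visibly decompose along the cells of the dual subdivision; the delicate points are to verify that the bounded edges --- in particular those of weight greater than one --- contribute no non-trivial strata, and that the per-vertex pieces really do have the class of a projective space. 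Constructing and controlling this model is where we expect the technical heart of a proof to lie.
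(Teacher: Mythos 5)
You should first be clear about the status of the statement: it is a \emph{conjecture}, and the paper does not prove it in general. What the paper establishes is the case $g=1$ (Theorem \ref{thm:genus1}), the implication Conjecture \ref{conj:chiydelta} $\Rightarrow$ Conjecture \ref{conj:rational} (Theorem \ref{thm:zeta} and Corollary \ref{cor:imply}), and the specialization to $y=1$ (Theorem \ref{thm:toricSeveri}). Your text is likewise not a proof: its central step, the ``motivic factorization'' of $[\oJac(\cC_\Gamma)]$ into vertex-local factors, is exactly the open content of the conjecture and is only asserted, with the construction of the required model deferred. Note also that in the cases the paper can handle, its method is \emph{additive}, not multiplicative: one decomposes $\cC_\Gamma$ (for $g=1$, $\oJac(\cC)=\cC$) into semialgebraic pieces lying over the cells of the tropicalization, computes the Hrushovski--Kazhdan volume of each piece from smooth initial degenerations via Propositions \ref{prop:tropvol2} and \ref{prop:tropvol3}, and sums; and the remark closing \S\ref{sec:genus1} states that even the $y\mapsto y^{-1}$ symmetry of $\chi_{-y}(\oJac(\cC_\Gamma))$ --- an immediate consequence of your proposed product formula --- is not known in general. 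So the factorization you are assuming is strictly stronger than anything the authors can prove.

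There is moreover a concrete error in your opening reduction. The identity $\sum_v(m_v-1)=2g$, and with it the claimed equivalence of the conjecture with $\chi_{-y}(\oJac(\cC_\Gamma))=\prod_v\chi_{-y}(\PP^{m_v-1})$, fails whenever $\Gamma$ has a node, i.e.\ a $4$-valent crossing of the image (a loop in $\Gamma$). In the paper's own Example \ref{ex:cubic}, most of the rational tropical cubics have a loop and \emph{all} their trivalent vertices have $m_v=1$, so your right-hand side is $1$, whereas the paper computes $\chi_{-y}(\oJac(\cC_\Gamma))=\chi_{-y}(\cC_\Gamma)=y$; consistency with $N(\Gamma)=y^{-g}\chi_{-y}(\oJac(\cC_\Gamma))$ forces each crossing to contribute an extra factor $y$ (motivically, essentially $\LL$), reflecting that the node of the rational fiber tropicalizes to the crossing. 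So any correct local formula must include node-local factors in addition to the vertex factors $[\PP^{m_v-1}]$, and your bounded-edge/weight-two bookkeeping would have to be redone accordingly (your parenthetical claim that every member of $\cC_\Gamma$ is integral with exactly $g$ nodes is also false: Proposition \ref{prop:irreducible} gives integrality only, and in Example \ref{ex:cubic} all but finitely many fibers over $|L|_\Gamma$ are smooth of genus $1$). In short: the reformulation is wrong as stated, and the step that would make the corrected version work is precisely the unproven heart of the conjecture.
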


As evidence in favor of this conjecture, we prove that it is correct when $g = 1$ (see Theorem \ref{thm:genus1}). The proof is based on a tropical formula for the Hrushovski--Kazhdan motivic volume of sch{\"o}n subvarieties of tori, which is of indepdendent interest.  See \S\ref{sec:motivic}. We will also prove that our conjecture holds after specialization to ordinary Euler characteristic: if $\delta = g$ then $ n(\Gamma)=\eu(\oJac(\cC_\Gamma))$ (see Theorem \ref{thm:toricSeveri}). This gives a new geometric interpretation of the classical tropical multiplicity $n(\Gamma)$. Our argument uses a comparison result for the motivic Euler characteristic of a semialgebraic set and Berkovich's $\ell$-adic cohomology for nonarchimedean analytic spaces (Proposition \ref{prop:analytic}); this allows us to prove that the motivic Euler characteristic satisfies some standard cohomological properties (see in particular Corollary \ref{cor:eufibers}).

\smallskip

We now drop the assumption that $\delta = g$ and state analogous conjectures and results for counting nodal curves of arbitrary genus.   Let $\Gamma \subset \RR^2$ be a tropical curve of genus $g-\delta$ and degree $\Delta$ through the points of $\trop(S)$.  Adapting the approach of G\"ottsche and Shende to families of curves over a semialgebraic base, we define polynomials $N_r(\cC_{\Gamma})$ in $\ZZ[y]$, for $r\geq 0$, by means of the equality
\[
q^{1-g} \sum_{i=0}^\infty \, \chi_{-y} (\Hilb^i(\cC_\Gamma)) \, q^i
=
\sum_{r = 0}^\infty N_r(\cC_\Gamma) \, q^{r+1-g}(1-q)^{g-r-1}(1-qy)^{g-r-1}.
\]
Here, the relevant geometric object $\Hilb^i(\cC_\Gamma)$ is the preimage of the semialgebraic set $|L|_\Gamma \subset |L|$ in the relative Hilbert scheme $\Hilb^i(\cC) \rightarrow |L|$. Note that the $r$th term in the right hand side is a Laurent series in $q$ with leading exponent $r+1-g$, leading to a recursive formula for the $N_r(\cC_\Gamma)$ with a unique solution.

\begin{conj} \label{conj:chiydelta}
For any value of $\delta$ in $\{0,\ldots,g\}$, let $\Gamma\subset \RR^2$ be a tropical curve of genus $g-\delta$ and degree $\Delta$ through the points of $\trop(S)$.  Then the Block--G\"ottsche refined tropical multiplicity $N(\Gamma)$ is equal to $y^{-\delta} N_{\delta}(\cC_\Gamma)$.
\end{conj}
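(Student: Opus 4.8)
We outline a strategy for approaching Conjecture~\ref{conj:chiydelta}. The plan is to reduce it to a local statement at each trivalent vertex of $\Gamma$, exploiting that both sides are multiplicative over the vertices. On the combinatorial side this is built in: as recalled in \S\ref{sec:tropicalcounting}, the Block--G\"ottsche multiplicity is a product $N(\Gamma)=\prod_V [m_V]_y$ over the trivalent vertices $V$ of $\Gamma$, with each factor a quantum integer in the vertex multiplicity $m_V$ (the lattice area of the triangle dual to $V$). The first task is therefore to produce a matching factorization of $y^{-\delta}N_\delta(\cC_\Gamma)$ into local contributions indexed by the vertices of $\Gamma$. The mechanism should be the degeneration of the family $\cC_\Gamma\to |L|_\Gamma$ governed by the regular subdivision of $\Delta$ dual to $\Gamma$, as in the nonarchimedean--tropical correspondence: the members of $|L|_\Gamma$ are integral (shown in \S\ref{sec:integral}), their $\delta$-nodal members are those that the generating series $\sum_i \chi_{-y}(\Hilb^i(\cC_\Gamma))\,q^i$ is designed to detect through the coefficient $N_\delta$, and the $\delta$ nodes of such a member are distributed among the cells of the subdivision so that each trivalent vertex $V$ governs a definite local piece of the curve, lying sch\"on in a torus whose Newton polygon is the triangle dual to $V$.

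Granting this, the second step is the local computation: one must show that the contribution of a trivalent vertex $V$ to $y^{-\delta}N_\delta(\cC_\Gamma)$ is exactly $[m_V]_y$. Here I would use the tropical formula for the Hrushovski--Kazhdan motivic volume of sch\"on subvarieties of tori developed in \S\ref{sec:motivic} --- the same engine that drives the genus~$1$ case (Theorem~\ref{thm:genus1}) --- to compute each $\chi_{-y}(\Hilb^i(\cC_\Gamma))$ as a weighted lattice-point sum over the combinatorics of $\Gamma$, and then feed this through the G\"ottsche--Shende identity
\[
q^{1-g} \sum_{i=0}^\infty \, \chi_{-y} (\Hilb^i(\cC_\Gamma)) \, q^i
=
\sum_{r = 0}^\infty N_r(\cC_\Gamma) \, q^{r+1-g}(1-q)^{g-r-1}(1-qy)^{g-r-1}
\]
to isolate $N_\delta$. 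Several checkpoints calibrate the local contribution and serve as sanity checks: setting $y=1$ must recover $n_\delta(\cC_\Gamma)=n(\Gamma)$, which is Theorem~\ref{thm:toricSeveri}; the case $\delta=g$ must agree with $y^{-g}\chi_{-y}(\oJac(\cC_\Gamma))$ of Conjecture~\ref{conj:rational} under the relation between $\Hilb^i$ of a rational curve and its compactified Jacobian, so that the $g=1$ instance is precisely Theorem~\ref{thm:genus1}; and $y^{-\delta}N_\delta(\cC_\Gamma)$ should inherit the $y^{1/2}\mapsto y^{-1/2}$ symmetry of $\prod_V[m_V]_y$, a functional equation whose verification is a useful consistency test. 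One also needs the vanishing $N_r(\cC_\Gamma)=0$ for $r>\delta$ in this semialgebraic setting --- the analogue of \cite[Conjecture~5]{GS14} --- which should likewise follow from the local analysis, since once the genus of $\Gamma$ is fixed a $\delta$-nodal fiber can only contribute cogenus at most $\delta$.

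The main obstacle is exactly the phenomenon highlighted in Example~\ref{ex:cubic}: $\chi_{-y}$ is not additive in families because of the monodromy on the variation of Hodge structures, so none of the above can be carried out by integration over $|L|_\Gamma$, and the Hrushovski--Kazhdan motivic volume must be used at every turn. The hard part is to upgrade the genus~$1$ tropical formula for the motivic volume of sch\"on subvarieties to the relative Hilbert scheme $\Hilb^i(\cC_\Gamma)$ in arbitrary genus. Unlike $\oJac(\cC_\Gamma)$ in the genus~$1$ situation, $\Hilb^i(\cC_\Gamma)$ is not directly modelled on a torus, the base $|L|_\Gamma$ is higher-dimensional and combinatorially more involved, and one must control how the local contributions of distinct vertices interact globally --- in particular establishing that they genuinely multiply, not merely after $y=1$. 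A secondary difficulty is that the multiplicativity and vanishing results of G\"ottsche--Shende are proved for honest families of curves on smooth projective surfaces, under $\delta$-very-ampleness or positivity hypotheses, whereas here the base is a semialgebraic set and $\chi_{-y}$ is defined motivically; transporting those arguments into the present framework, together with the comparison to Berkovich cohomology (Proposition~\ref{prop:analytic}), is itself a substantial part of the program.
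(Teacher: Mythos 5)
The statement you are addressing is a \emph{conjecture} in the paper: Conjecture~\ref{conj:chiydelta} is not proved there in general, and your text, by its own admission, is a program rather than a proof. So the first thing to say is that there is a genuine gap, and it sits exactly where the open problem sits. Your central device --- a factorization of $y^{-\delta}N_\delta(\cC_\Gamma)$ into local contributions $[m_V]_y$ attached to the trivalent vertices of $\Gamma$, mirroring $N(\Gamma)=\prod_v M(v)$ --- is nowhere established, and you give no mechanism for it beyond the hope that the dual subdivision of $\Delta$ ``governs'' local pieces. The paper's own evidence suggests this is the hard point: in the cases it does settle ($g=1$ in Theorem~\ref{thm:genus1} and $\delta\leq 1$ in Theorem~\ref{thm:delta1}), the argument is \emph{not} vertex-local. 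It first uses the motivic Hilbert zeta function (Theorem~\ref{thm:zeta}) to identify $N^{\mathrm{mot}}_g(\cC_\Gamma)=[\oJac(\cC_\Gamma)]$ and $N^{\mathrm{mot}}_1(\cC_\Gamma)=[\cC_\Gamma]+(g-1)(\LL+1)[\,|L|_\Gamma]$, thereby reducing the conjecture in those cases to computing $\chi_{-y}$ of a single semialgebraic set; it then decomposes $\cC_\Gamma$ (and $|L|_\Gamma$) into preimages of \emph{all} faces of the tropicalization, computes each initial degeneration, and sums, using Pick's formula to count faces --- the expected answer emerges only after global cancellation, and the individual face contributions do not even exhibit the $y\mapsto y^{-1}$ symmetry (the paper explicitly remarks it cannot see this symmetry termwise). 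So the multiplicativity over vertices that your plan takes as its backbone is precisely what is missing, and nothing in your outline supplies it.

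Two further steps you invoke are likewise unproven inputs rather than available tools. The vanishing $N_r(\cC_\Gamma)=0$ for $r>\delta$ is, for $\delta<g$, the analogue of \cite[Conjecture~5]{GS14} and is not known in this semialgebraic setting; the paper only proves vanishing for $r>g$ (Theorem~\ref{thm:zeta} and Corollary~\ref{cor:imply}), which is what makes the $\delta=g$ and $\delta=1$ reductions work but does not touch intermediate $\delta$. And your proposed use of the sch\"on tropical formula (Proposition~\ref{prop:tropvol2} and its variants) for $\Hilb^i(\cC_\Gamma)$ in arbitrary genus is exactly the obstacle you name yourself: $\Hilb^i(\cC_\Gamma)$ is not presented as a sch\"on subvariety of a torus, and no substitute computation is offered. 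Your consistency checks (specialization to $y=1$ via Theorem~\ref{thm:toricSeveri}, compatibility with Conjecture~\ref{conj:rational} via Corollary~\ref{cor:imply}, the genus-one case) are correctly identified and do match the paper's partial results, but they calibrate a proof that has not been constructed. In short: the proposal is a reasonable research plan, but every step beyond what the paper already proves is asserted rather than argued, and the paper's own method in the proved cases proceeds by a different, non-multiplicative route.
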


\noindent We prove that this conjecture, also, is correct for $g=1$ (Theorem \ref{thm:genus1}), for $\delta \leq 1$ (Theorem~\ref{thm:delta1}), and after specialization to $y = 1$ (Theorem \ref{thm:toricSeveri}). Moreover, we will show that Conjecture \ref{conj:chiydelta} implies Conjecture \ref{conj:rational}; see Corollary \ref{cor:imply} for a more precise statement. Note that Conjecture~\ref{conj:chiydelta} also implies that the evaluation of $y^{-\delta} N_\delta(\cC_\Gamma)$ at $y = -1$ is equal to the tropical Welschinger invariant of $\Gamma$, as studied in \cite{IKS09}.

  Conjectures \ref{conj:rational} and \ref{conj:chiydelta} provide a strategy to prove the correspondence conjecture between tropical and geometric refined curve counts \cite[2.12]{BG14}. The correspondence conjecture states that the tropical and geometric counts agree under a suitable positivity condition on the line bundle $L(\Delta)$ (namely, $\delta$-very ampleness).
 Assume that $g=\delta$ and that Conjecture \ref{conj:rational} holds. In order to prove the correspondence conjecture, it would be sufficient to show that the locus of curves in $\mathscr{C}$ whose tropicalizations are not rational, has $\chi_{-y}$-genus equal to zero. The correspondence conjecture then follows from the additivity of the $\chi_{-y}$-genus on finite semi-algebraic partitions. A similar strategy can be applied to the case where $\delta$ is arbitrary, starting from Conjecture \ref{conj:chiydelta} and proving the vanishing of the  contribution of the curves whose tropicalization has genus larger than $g-\delta$.

We conclude the introduction with an example illustrating Conjecture~\ref{conj:rational} in the genus one case.

\begin{ex}\label{ex:cubic}
Let $\cC \rightarrow \PP^1$ be the pencil of cubics through 8 general points in $\PP^2$.  This pencil contains 12 rational fibers.  This can be seen by noting that the total space $\cC$ is isomorphic to the blowup of $\PP^2$ at the 9 basepoints of the pencil, and hence has Euler characteristic 12.  It is straightforward to check that cuspidal curves, reducible curves, and non-reduced curves all have codimension greater than 1 in the space of all cubics, and hence a general pencil contains only smooth curves of genus 1 and nodal rational curves.  Since genus 1 curves have Euler characteristic 0 and rational nodal cubics have Euler characteristic 1, it follows that there must be exactly 12 rational fibers.

This count can also be performed tropically.  There are two possibilities for the collection of tropical rational curves through 8 general points in $\RR^2$.  Either there are 9 such curves, of which 8 contain a loop and 1 contains no loop but has a bounded edge of weight 2, or there are 10 such curves, of which 9 contain a loop and 1 contains no loop, but has a vertex whose outgoing edge directions span a sublattice of index 3.  Each tropical curve $\Gamma$ containing a loop counts with tropical multiplicity $n(\Gamma) = 1$, the curve $\Gamma'$ with a bounded edge of weight 2 counts with multiplicity $n(\Gamma') = 4$, and the curve $\Gamma''$ with a vertex of multiplicity 3 counts with multiplicity $n(\Gamma'') = 3$.  The refined tropical multiplicities, as defined combinatorially by Block and G\"ottsche, are
\[
N(\Gamma) = 1,  \mbox{ \ \ } N(\Gamma') = y^{-1} + 2 + y, \mbox{ \ \ and \ \ } N(\Gamma'') = y^{-1} + 1 + y,
\]
respectively.  The following figure illustrates examples of tropical rational curves of degree 3 in $\PP^2$ with a loop and a node, an edge of weight 2, and a vertex of multiplicity 3; the node, the edge of weight 2, and the vertex of multiplicity 3 are marked in blue.

\bigskip

\begin {center} \scalebox{0.7}{\begin{picture}(0,0)%
\includegraphics{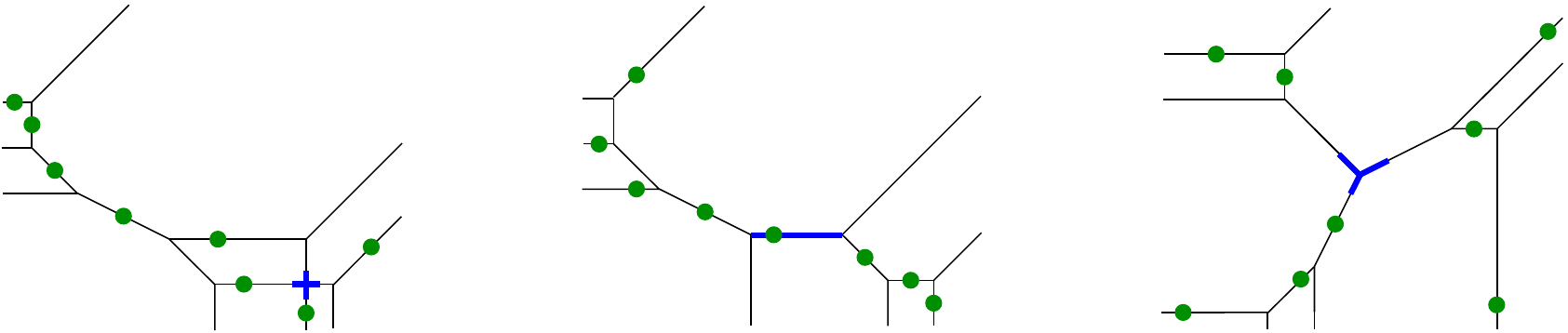}%
\end{picture}%
\setlength{\unitlength}{3947sp}%
\begingroup\makeatletter\ifx\SetFigFont\undefined%
\gdef\SetFigFont#1#2#3#4#5{%
  \reset@font\fontsize{#1}{#2pt}%
  \fontfamily{#3}\fontseries{#4}\fontshape{#5}%
  \selectfont}%
\fi\endgroup%
\begin{picture}(8073,1703)(8846,-11484)
\put(10072,-10293){\makebox(0,0)[lb]{\smash{{\SetFigFont{14}{16.8}{\familydefault}{\mddefault}{\updefault}{\color[rgb]{0,0,0}$\Gamma$}%
}}}}
\put(12901,-10261){\makebox(0,0)[lb]{\smash{{\SetFigFont{14}{16.8}{\familydefault}{\mddefault}{\updefault}{\color[rgb]{0,0,0}$\Gamma'$}%
}}}}
\put(15901,-10261){\makebox(0,0)[lb]{\smash{{\SetFigFont{14}{16.8}{\familydefault}{\mddefault}{\updefault}{\color[rgb]{0,0,0}$\Gamma''$}%
}}}}
\put(12901,-10936){\makebox(0,0)[lb]{\smash{{\SetFigFont{14}{16.8}{\familydefault}{\mddefault}{\updefault}{\color[rgb]{0,0,1}$2$}%
}}}}
\end{picture}%
} \end {center}

\bigskip

Let us briefly explain how we confirm Conjecture \ref{conj:rational} in these cases, postponing the details of the computations to \S\ref{sec:genus1}.

Each of the tropical curves $\Gamma$ with a loop also has a node where the images of two different edges in the rational parameterization cross.  We write $\PP^1_\Gamma \subset \PP^1$ for the semialgebraic subset parameterizing curves with tropicalization $\Gamma$, and $\cC_\Gamma$ for the preimage of $\PP^1_\Gamma$ in the universal curve $\cC$.  Then $\cC_\Gamma \rightarrow \PP^1_\Gamma$ contains exactly one rational fiber, which is nodal, and it follows that the Euler characteristic of $\cC_\Gamma$ is 1.  The tropicalization of the node in the rational fiber is the tropical node $v$, because the initial degeneration of each fiber at any other point of $\Gamma$ is necessarily smooth.  We will show in \S\ref{sec:genus1} that the motivic volume of $\cC_\Gamma$ is $\LL$ (the class of the affine line in the Grothendieck group of varieties over the residue field), and hence
\[
\chi_{-y}(\cC_\Gamma) = y,
\]
as predicted by Conjecture \ref{conj:rational}. Note that this value is also equal to the $\chi_{-y}$-genus of the rational fiber, so that the $\chi_{-y}$-genus of the union of all smooth fibers is 0.

Next, we consider the tropical curve $\Gamma'$ with a bounded edge of weight 2.  The universal curve $\cC_{\Gamma'} \rightarrow \PP^1_{\Gamma'}$ contains exactly four rational fibers, each of which is nodal, and the tropicalization of each node lies in the interior of the edge of weight 2, because the initial degeneration of each fiber at any other point of $\Gamma'$ is necessarily smooth.  We will show in \S\ref{sec:genus1} that
\[
\chi_{-y}(\cC_{\Gamma'}) = 1 + 2y + y^2,
\]
which is equal to $y N({\Gamma'})$, in agreement with Conjecture~\ref{conj:rational}.  Note that the $\chi_{-y}$-genus of each rational fiber is $y$, so that the $\chi_{-y}$-genus of the union of the smooth fibers is $1 - 2y + y^2$, even though the $\chi_{-y}$-genus of each smooth fiber is 0. This illustrates the crucial complication that, unlike the Euler characteristic, the $\chi_{-y}$-genus of a family cannot be computed by integration on the base, because of the monodromy action on the induced variation of Hodge stuctures. In particular, to compute $\chi_{-y}(\cC_{\Gamma'})$, it is not sufficient to add up the $\chi_{-y}$-genera of the singular fibers.

Similarly, for the tropical curve $\Gamma''$ with a vertex of multiplicity 3, the universal curve $\cC_{\Gamma''} \rightarrow \PP^1_{\Gamma''}$ has exactly three rational fibers, each of which is nodal, and the tropicalization of each node is the vertex of multiplicity 3.  In this case, our computations yield $$\chi_{-y}(\cC_{\Gamma''})=1 + y + y^2,$$ which is equal to $y N({\Gamma''})$, again confirming Conjecture~\ref{conj:rational}.  Once more, we find that the $\chi_{-y}$-genus of the union of all smooth fibers is $1- 2y + y^2$, even though the $\chi_{-y}$ genus of each smooth fiber is 0.
\end{ex}

\noindent \textbf{Acknowledgments.}  We thank F.~Block, L.~G\"ottsche, A.~Gross, I.~Itenberg, D.~Ranganathan, and V.~Shende for helpful conversations related to this work. Johannes Nicaise is supported by the ERC Starting Grant MOTZETA (project 306610) of the European Research Council, and by long term structural funding (Methusalem
grant) of the Flemish Government. Sam Payne is supported in part by NSF CAREER DMS-1149054.

\section{Tropical preliminaries}  \label{sec:prelims}

\subsection{Tropicalizations of curves} \label{sec:tropicalization}

Here we recall a few basic notions from tropical curve counting that are essential for our purposes, including the definition of the Block--G\"ottsche multiplicities.  For further details and references related to tropicalization and analytification of curves, we refer the reader to \cite{BPR16}.

Throughout, we fix a rank two lattice $M$ with dual lattice $N$ and a lattice polygon $\Delta$ in $M_\RR$ with $n +1$ lattice points $u_0, \ldots, u_n$, of which $g$ lie in the interior of $\Delta$. We denote by $Y(\Delta)$ the corresponding toric surface over the field of Puiseux series $K = \Puis$.  Let $f \in K[M]$ be a Laurent polynomial with Newton polygon $\Delta$.  In other words,
\[
f = a_0 x^{u_0} + \cdots + a_n x^{u_n},
\]
with coefficients $a_i \in K$, and $a_i$ is nonzero when $u_i$ is a vertex of $\Delta$.  The vanishing locus of $f$ is a curve $X$ in the torus $T = \Spec K[M]$.  Moreover, since $\Delta$ is the Newton polygon of $f$, the closure $\overline X$ in $Y(\Delta)$ is a curve in the complete linear series $|L(\Delta)|$ that  does not contain any of the $T$-fixed points of $Y(\Delta)$.  Conversely, any curve in $|L(\Delta)|$ that does not contain any of the $T$-fixed points is the closure in $Y(\Delta)$ of the vanishing locus of a Laurent polynomial with Newton polygon $\Delta$.

We associate to $f$ the piecewise linear function $\psi_f : N_\RR \rightarrow \RR$ defined by
\[
\psi_f(v) = \min \{ \langle u_0, v \rangle + \val (a_0), \ldots, \langle u_n, v \rangle + \val (a_n) \}.
\]
The \emph{tropicalization} of the curve $X$, denoted $\Trop(X)$, can be characterized in three equivalent ways, as:
\begin{enumerate}
\item the corner locus of $\psi_f$;
\item the image of the Berkovich analytification $X^\an$, which is a closed subset of $T^\an$, under the proper map $T^\an \rightarrow N_\RR$ defined by coordinatewise valuation;
\item the closure in $N_\RR$ of the image of $X(K)$ under coordinatewise valuation.
\end{enumerate}
It is a rational polyhedral complex of pure dimension $1$.

We also associate to $f$ the \emph{Newton subdivision} of $\Delta$, defined by taking the lower convex hull of the points $(u_i, \val(a_i))$ in $M_\RR \times \RR$, for $a_i \neq 0$, and projecting onto $\Delta$.  Note that the vertices of the Newton subdivision form a subset of the lattice points in $\Delta$, and that this may be a proper subset.  There is a natural order reversing, incidence preserving bijection in which the maximal faces of the Newton subdivision correspond to the vertices of $\Trop(X)$, the edges of the Newton subdivision correspond to the edges of $\Trop(X)$, and the vertices of the Newton subdivision correspond to the chambers of $N_\RR\smallsetminus \Trop(X)$.  This bijection takes a positive-dimensional face $F$ of the Newton subdivision to the face $\gamma_F$ of $\Trop(X)$ given by $$\gamma_F = \{ v \in N_\RR \ | \ \psi_f(v) = \langle u_i, v \rangle + \val (a_i), \mbox{ for } u_i \in F \}.$$  In particular, the unbounded edge directions of $\Trop(X)$ are the inner normals to edges of $\Delta$, and the bounded edge directions are orthogonal to the interior edges of the Newton subdivision.

The edges of $\Trop(X)$ come with positive integer \emph{weights} that satisfy a balancing condition at each vertex, as follows.  If $e$ is an edge of $\Trop(X)$ then the weight $w(e)$ is the lattice length of the corresponding edge of the Newton subdivision.  Let $v$ be a vertex of $\Trop(X)$, let $e_1, \ldots, e_s$ be the edges of $\Trop(X)$ that contain $v$, and let $v_i \in N$ be the primitive lattice vector parallel to $e_i$ in the outgoing direction starting from $v$.  Then the balancing condition at $v$ may be expressed as
\[
w(e_1) v_1 + \cdots + w(e_s) v_s = 0.
\]
Conversely, any rational polyhedral complex of pure dimension 1 in $N_\RR$, with positive integer weights assigned to each edge that satisfy the balancing condition at every vertex, can be realized as the tropicalization of a curve $X \subset T$, and these balanced, weighted complexes are called \emph{tropical curves}.  We say that a tropical curve in $N_\RR$ has \emph{degree $\Delta$} if it is the tropicalization of a curve defined by a Laurent polynomial with Newton polygon $\Delta$.  Equivalently, to each tropical curve, we can associate the multiset of outgoing directions of its unbounded edges, in which the direction of an unbounded edge $e$ is counted $w(e)$ times.  Then a tropical curve has degree $\Delta$ if and only if this multiset consists of the inner normals of $\Delta$, and the number of times that the inner normal of each edge of $\Delta$ appears is the lattice length of that edge.

\subsection{The space of curves with a given tropicalization}\label{sec:fixtrop}

Let $\Gamma$ be a tropical curve of degree $\Delta$. We now describe the locus $|L(\Delta)|_\Gamma$ in the complete linear series $|L(\Delta)|$ parameterizing curves whose intersection with the dense torus $T$ has tropicalization $\Gamma$, as a semialgebraic subset.   The complete linear series $|L(\Delta)|$ is a projective space of dimension $n$, with homogeneous coordinates $[a_0 : \cdots : a_n]$ corresponding to the lattice points $u_0, \ldots, u_n$ in $\Delta$.  The homogeneous coordinate $a_i$ corresponding to a vertex $u_i$ of $\Delta$ must be nonzero for any curve in $|L(\Delta)|$ whose intersection with $T$ has tropicalization $\Gamma$.  In particular, we may assume that $u_0$ is a vertex of $\Delta$ and restrict attention to the affine space where $a_0$ is nonzero, and normalize so that $a_0 = 1$.  Then $a_1, \ldots, a_n$ are coordinates on this affine space.  Note that there is a unique concave function $\phi: \Delta \cap \ZZ^2 \to \QQ$ such that $\phi(u_0) = 0$ and the corner locus of the corresponding concave piecewise linear function $\psi$ on $\RR^2$ given by
\[
\psi(v) = \min_{u \in \Delta \cap \ZZ^2} \langle u, v \rangle + \phi(u)
\]
is exactly $\Gamma$, with its given edge weights.  Then, the tropicalization of the curve $X$ in $T$ defined by the equation $x^{u_0} + a_1 x^{u_1} + \cdots + a_n x^{u_n}= 0$ is equal to $\Gamma$ if and only if $\val(a_i) = \phi(u_i)$ for each vertex $u_i$ of the Newton subdivision dual to $\Gamma$, and $\val(a_j) \geq \phi(u_j)$ for each lattice point $u_j$ in $\Delta$ that is not a vertex of the Newton subdivision. These conditions define a closed semialgebraic subset of the complete linear series $|L(\Delta)|$.  See \S\ref{sec:semialgebraicsets} for further details on semialgebraic sets and \cite{KP11, Kat13} for a more general treatment of \emph{realization spaces} for tropical varieties.

\subsection{Tropical curve counting}  \label{sec:tropicalcounting}

Tropical curves of degree $\Delta$ in $N_\RR$ can be used to count algebraic curves in $|L(\Delta)|$, using the well-known correspondence theorems.  To explain the statement of these theorems, it is helpful first to define the \emph{genus} of a tropical curve using parameterizations.

Let $\Gamma$ be a tropical curve of degree $\Delta$ in $N_\RR$.  A parameterization of $\Gamma$ is a metric graph $\Gamma'$ with a continuous surjective map $p: \Gamma' \rightarrow \Gamma$ such that the restriction of $p$ to each edge of $\Gamma'$ is linear with derivative in the lattice $N$.  These slopes are required to satisfy the balancing condition at each vertex of $\Gamma'$, meaning that the sum of the outgoing slopes at each vertex is zero.  Finally, the parameterization must be compatible with the edge weights on $\Gamma$.  This means that there is some subdivision of $\Gamma$ in which each edge is the homeomorphic image of finitely many edges of $\Gamma'$, and each of these homeomorphisms is a dilation by a positive integer factor (with respect to the lattice metric on $\Gamma$), such that the sum of these dilation factors is the edge weight in $\Gamma$.  Note that parameterizations may contract some edges of $\Gamma'$. We identify two parameterizations if a subdivision of one is isomorphic to a subdivision of the other.  The \emph{genus} of a tropical curve $\Gamma$ in $N_\RR$ is defined to be the smallest first Betti number of a metric graph $\Gamma'$ that admits a parameterization $p: \Gamma' \rightarrow \Gamma$.  It follows easily from the theory of skeletons of Berkovich analytifications of curves, as discussed in \cite{BPR16}, that the genus of the tropicalization of a curve $X \subset T$ is less than or equal to the geometric genus of $X$.

Fix $0 \leq \delta \leq g$, and choose a set $S$ of $n - \delta$ rational points in general position in $N_\RR$. Note that we do not require $S$ to be in vertically or horizontally stretched position, or to lie on a line of irrational slope, we just require that it be general with respect to the appropriate evaluation map, as in \cite{GM08}.  This condition is satisfied on an open dense subset of $N_\RR^{n- \delta}$.  One of the essential and foundational combinatorial facts underlying tropical curve counting is that there are only finitely many parameterized tropical curves of genus $g - \delta$ and degree $\Delta$ passing through $S$.  Each of these tropical curves is \emph{simple}, meaning that the parameterizing curve is connected and trivalent, the parameterization is an immersion, the image has only trivalent and 4-valent vertices, and the preimage of each 4-valent vertex has exactly two points.  Moreover, each unbounded edge has weight 1.

Now, choose a collection $\widetilde S$ of $n - \delta$ points in $T(K)$ whose tropicalization is $S$. There are finitely many curves of geometric genus $g - \delta$ in $|L(\Delta)|$ that contain $\widetilde S$, and the tropicalization of each of these is one of the finitely many tropical curves of genus $g - \delta$ that contains $S$.  Let $\Gamma$ be one of these tropical curves.  The nonarchimedean tropical correspondence theorems tell us that the number of algebraic curves of genus $g - \delta$ that contain $\widetilde S$ and tropicalize to $\Gamma$ can be expressed combinatorially as a product over trivalent vertices,
\[
n(\Gamma) = \prod_v m(v)
\]
where the factor $m(v)$ corresponding to a vertex $v$ is the index of the sublattice of $N$ generated by the vectors $w(e_1) v_1, \ldots, w(e_s) v_s$, where $v_i$ is the primitive lattice vector in the outgoing direction along the edge $e_i$, and $w(e_i)$ is the weight of this edge.

The Block--G\"ottsche multiplicities are defined similarly, by the combinatorial formula
\[
N(\Gamma) = \prod_v M(v),
\]
where $M(v)$ is the Laurent polynomial in $y^{1/2}$ given by $$\frac{y^{m(v)/2}-y^{-m(v)/2}}{y^{1/2}-y^{-1/2}} = y^{(m(v)-1)/2}+y^{(m(v)-3)/2}+\cdots+y^{-(m(v)-1)/2}.$$  Each tropical curve of genus $g - \delta$ containing $S$ has an even number of vertices such that $m(v)$ is even.  To see this, note that the multiplicity of a trivalent vertex in the embedded tropical curve is twice the area of the corresponding triangle in the Newton subdivision.  Pick's formula then implies that the multiplicity of the vertex is congruent to the lattice perimeter of the corresponding triangle, modulo 2.  Therefore, if every edge of Gamma has odd weight, then every triangle has odd perimeter.  Each bounded edge of even weight in the tropical curve corresponds to an interior edge of even length in the Newton subdivision, which changes the parity of the lattice perimeter of two triangles.  It follows that $N(\Gamma)$ is a Laurent polynomial in $y$, even though it is expressed as a product of Laurent polynomials in $y^{1/2}$. This Laurent polynomial specializes to $n(\Gamma)$ by setting $y = 1$, since $M(v)$ specializes to $m(v)$ by setting $y^{1/2} = 1$.

\section{Motivic volumes of semialgebraic sets}  \label{sec:motivic}
 An essential tool in this paper is a motivic Euler characteristic for definable sets in the language of valued fields over $\Puis$, which was constructed by Hrushovski-Kazhdan \cite{HK06} using deep results from model theory. It assigns a value in the Grothendieck ring $K_0(\Var_{\CC})$ of  complex varieties to any semialgebraic set in an algebraic $\Puis$-variety, and it is additive with respect to disjoint unions. We will use this motivic Euler characteristic to define the $\chi_{-y}$-genus of semialgebraic sets such as $\Jac(\cC_\Gamma)$.

 Hrushovski and Kazhdan's theory works over any henselian valued field of equal characteristic zero. We will review the main statements in the special case we need, where the field is real-valued and algebraically closed.  The latter assumption  substantially simplifies a part of the construction, by collapsing the generalized residue field structure. Although the proofs of these results use the model theory of algebraically closed valued fields in an essential way, we have tried to present the statements, as much as possible, in a geometric language.

 Unless explicitly stated otherwise, the results in this section are valid for any height one valuation ring $R$ of equal characteristic zero with algebraically closed quotient field $K$. We denote by $k$ the residue field of $R$, and by $G$ the value group of $K$.  The assumption that $K$ is algebraically closed implies that $k$ is algebraically closed and $G$ is divisible. The valuation map is denoted by $\val:K^\ast \twoheadrightarrow G$.  We fix an embedding of the ordered group $G$ in $(\RR,+,\leq)$, extend the ordering to $G\cup \{\infty\}$ by declaring that $x\leq \infty$ for every $x$ in $G\cup \{\infty\}$, and extend the valuation to $K$ by setting $\val(0) = \infty$.

  A polyhedron in $G^n$ is an intersection of finitely many half-spaces of the form $$\{u\in G^n\,|\,a_1u_1+\ldots +a_nu_n\leq a_0\}$$ with $a_1,\ldots,a_n$ in $\QQ$ and $a_0$ in $G$. We will also use the term ``$G$-rational polyhedron'' to denote subsets of $\RR^n$ defined by the same type of formulas. This should not lead to confusion, since a $G$-rational polyhedron in $\RR^n$ is completely determined by its intersection with the dense subset $G^n$ of $\RR^n$. For every integer $n>0$, we denote by $\trop$ the tropicalization map
  $$\trop:(K^{\ast})^n\to G^n:(x_1,\ldots,x_n)\mapsto (v(x_1),\ldots,v(x_n)).$$ More generally, for any algebraic torus $T$ over $K$ with cocharacter lattice $N$, we denote by
  $$\trop:T(K)\to N_{\RR}$$ the map defined by coordinatewise valuation.

\subsection{Semialgebraic subsets of algebraic $K$-varieties} \label{sec:semialgebraicsets}
Let $n$ be a positive integer. A semialgebraic subset of $K^n$ is a finite Boolean combination of subsets of the form $$\{x\in K^n\,|\,\val(f(x))\geq \val(g(x))\}$$ where $f$ and $g$ are polynomials in $K[x_1,\ldots,x_n]$. More generally, if $X$ is a $K$-scheme of finite type, then a subset $S$ of $X(K)$ is called a semialgebraic subset of $X$ if we can cover $X$ by affine open subschemes $U$ such that there exists a closed immersion $i:U\to \AA^n_K$, for some $n>0$, with the property that $i(S\cap U(K))$ is a semialgebraic subset of $K^n$. It is easy to check that, if $S$ is a semialgebraic subset of $X$, then $i(S\cap U(K))$ is a semialgebraic subset of $K^n$ for {\em every} open subscheme $U$ of $X$ and every closed immersion $i:U\to \AA^n_K$. Robinson's  quantifier elimination theorem for algebraically closed valued fields \cite{Robinson56} implies that, if $f:X\to Y$ is a morphism of $K$-schemes of finite type and $S$ is a semialgebraic subset of $X$, then $f(S)\subset Y(K)$ is a semialgebraic subset of $Y$.
 \begin{examples}\label{exam:semialg}
\begin{enumerate}
\item If $X$ is a $K$-scheme of finite type, then every constructible subset of $X(K)$ is semialgebraic. Indeed, locally on $X$, it is a Boolean combination of subsets of the form
    $$\{x\in X(K)\,|\,f(x)=0\}=\{x\in X(K)\,|\,\val(f(x))\geq \val(0)\}$$ with $f$ a regular function.

\item \label{it:spsemialg} Let $\cX$ be an $R$-scheme of finite type and set $X=\cX_K$. The specialization map
    $$\spe_{\cX}:\cX(R)\to \cX(k)$$ is defined by reducing coordinates modulo the maximal ideal of $R$.
If $C$ is a constructible subset of $\cX(k)$, then $\spe_{\cX}^{-1}(C)$ is a semialgebraic subset of $X$. To prove this, it suffices to consider the case where $\cX$ is affine and $C$ is closed in $\cX(k)$. If $t_1,\ldots,t_n$ generate the $R$-algebra $\mathcal{O}(\cX)$ and $C$ is the set of closed points of the zero locus of an ideal $(f_1,\ldots,f_\ell)$ in $\mathcal{O}(\cX)$, then
 $$\spe_{\cX}^{-1}(C)=\{x\in X(K)\,|\,\val(t_i(x))\geq 0 \mbox{ and }\val(f_j(x))>0\mbox{ for all }i,j\}.$$
 When $C$ is a constructible subset of $\cX_k$ (rather than $\cX(k)$) we will write $\spe_{\cX}^{-1}(C)$ instead of $\spe_{\cX}^{-1}(C\cap \cX(k))$.

\item If $a_1,\ldots,a_n$ are elements of $\Q$ and $c$ is an element of $G$, then $$\{x\in (K^\ast)^n\,|\,a_1\val(x_1)+\ldots+a_n\val(x_n)\leq c\}$$ is a semialgebraic subset of $K^n$. More generally, if $S$ is a finite Boolean combination of $G$-rational polyhedra in $\RR^n$, then $\trop^{-1}(S)$ is a semialgebraic subset of $K^n$. Conversely, it follows from Robinson's quantifier elimination for algebraically closed valued fields that the image of every semialgebraic subset of $(K^\ast)^n$ under $\trop$ is a finite Boolean combination of $G$-rational polyhedra in $G^n$.

\end{enumerate}
  \end{examples}

We will now discuss some less obvious examples that are of interest in tropical geometry.

\begin{prop}
Let $Y$ be a $K$-scheme of finite type and let $X$ be a subscheme of $Y\times_K \mathbb{G}_{m,K}^n$, for some $n>0$. We denote by $f:X(K)\to Y(K)$ the restriction of the projection morphism $Y\times_K \mathbb{G}_{m,K}^n\to Y$. Let $\Gamma$ be a finite Boolean combination of $G$-rational polyhedra in $\RR^n$. Then the set of points $y$ in $Y(K)$ such that $\trop(f^{-1}(y))=\Gamma$ is a semialgebraic subset of $Y$.
\end{prop}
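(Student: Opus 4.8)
The plan is to split the equality $\trop(f^{-1}(y)) = \Gamma$ into the two inclusions ``$\subseteq$'' and ``$\supseteq$'', to show that each of them cuts out a semialgebraic subset of $Y$, and to intersect the two loci. Throughout, we use the convention recalled above that a finite Boolean combination of $G$-rational polyhedra in $\RR^n$ is determined by its intersection with the dense subset $G^n$; since $\trop(f^{-1}(y))$ is always a subset of $G^n$, the condition $\trop(f^{-1}(y)) = \Gamma$ will be read as the equality $\trop(f^{-1}(y)) = \Gamma \cap G^n$ inside $G^n$. Let $\pi_Y\colon Y\times_K \mathbb{G}_{m,K}^n\to Y$ be the projection, so that $f$ is the restriction of $\pi_Y$ to $X(K)$. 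We will use freely that $X(K)$ is semialgebraic (it is constructible, Example~\ref{exam:semialg}(1)); that $\trop^{-1}(\Gamma)$ and $\trop^{-1}(\RR^n\smallsetminus\Gamma)$ are semialgebraic subsets of $(K^\ast)^n$ (Example~\ref{exam:semialg}(3)); that semialgebraic sets are stable under finite Boolean operations and under preimages along morphisms of finite type $K$-schemes; and that images of semialgebraic sets along such morphisms are semialgebraic (Robinson's quantifier elimination).

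\emph{The inclusion $\trop(f^{-1}(y))\subseteq\Gamma$.} Let $A$ be the intersection of $X(K)$ with the preimage under the second projection $Y\times_K\mathbb{G}_{m,K}^n\to \mathbb{G}_{m,K}^n$ of $\trop^{-1}(\RR^n\smallsetminus\Gamma)$; thus $A=\{(y,t)\in X(K)\mid \trop(t)\notin\Gamma\}$, which is semialgebraic. Its image $f(A)\subseteq Y(K)$ is semialgebraic, and by construction $f(A)=\{y\in Y(K)\mid \trop(f^{-1}(y))\not\subseteq\Gamma\}$. Hence the locus where $\trop(f^{-1}(y))\subseteq\Gamma$ holds is the complement $Y(K)\smallsetminus f(A)$, which is semialgebraic.

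\emph{The inclusion $\Gamma\cap G^n\subseteq\trop(f^{-1}(y))$.} This is where the real difficulty lies: a priori the condition reads ``for every $\gamma$ in the infinite polyhedral set $\Gamma\cap G^n$ there is a point of $f^{-1}(y)$ tropicalizing to $\gamma$'', so the universal quantifier ranges over $\Gamma\cap G^n$ rather than over $K$-points, which is not directly amenable to quantifier elimination. We get around this using surjectivity of $\val\colon K^\ast\to G$: every $\gamma\in\Gamma\cap G^n$ equals $\trop(t)$ for some $t\in\trop^{-1}(\Gamma)$, so the inclusion holds for $y$ if and only if for every $t\in(K^\ast)^n$ with $\trop(t)\in\Gamma$ there exists $s\in(K^\ast)^n$ with $(y,s)\in X(K)$ and $\trop(s)=\trop(t)$. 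Consider now
\[
\widetilde E=\bigl\{\,(y,t,s)\in Y(K)\times (K^\ast)^n\times (K^\ast)^n \ \bigm|\ (y,s)\in X(K),\ \val(s_i)=\val(t_i)\text{ for }1\le i\le n\,\bigr\}.
\]
This set is semialgebraic: the conditions $\val(s_i)=\val(t_i)$ are the conjunctions of the semialgebraic inequalities $\val(s_i)\ge\val(t_i)$ and $\val(t_i)\ge\val(s_i)$, while $\{(y,t,s)\mid (y,s)\in X(K)\}$ is the preimage of the semialgebraic set $X(K)$ under the projection forgetting the $t$-coordinates. Let $E\subseteq Y(K)\times (K^\ast)^n$ be the image of $\widetilde E$ under the projection forgetting $s$; it is semialgebraic. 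Setting $\Pi=Y(K)\times\trop^{-1}(\Gamma)$, which is again semialgebraic, our condition on $y$ reads $\Pi_y\subseteq E_y$, equivalently $y\notin\pi_Y(\Pi\smallsetminus E)$. Since $\Pi\smallsetminus E$ is semialgebraic, so is $\pi_Y(\Pi\smallsetminus E)$, and hence so is the locus where $\Gamma\cap G^n\subseteq\trop(f^{-1}(y))$ holds.

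Intersecting the two semialgebraic loci obtained above yields $\{y\in Y(K)\mid \trop(f^{-1}(y))=\Gamma\}$, which is therefore semialgebraic. As indicated, the only substantial point is the second inclusion, where one must convert the universal quantifier over the infinite set $\Gamma\cap G^n$ into a quantifier over $K$-points of the torus using surjectivity of the valuation; the remaining bookkeeping is a routine application of the permanence properties of semialgebraic sets and of Robinson's quantifier elimination.
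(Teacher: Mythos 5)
Your proof is correct and is essentially the paper's argument: the paper simply declares the statement an immediate consequence of Robinson's quantifier elimination, and your write-up spells out exactly that, encoding the two inclusions as Boolean combinations, preimages, and images (projections) of semialgebraic sets. The one point the paper leaves implicit --- converting the quantifier over $\Gamma\cap G^n$ into a quantifier over $K$-points of the torus via surjectivity of $\val$ --- is handled correctly in your proposal.
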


\begin{proof}
This is an immediate consequence of Robinson's  quantifier elimination theorem for algebraically closed valued fields.
\end{proof}

\begin{prop}
Let $Y$ be a $K$-scheme of finite type and let $X$ be a hypersurface in $Y\times_K \mathbb{G}_{m,K}^n$, for some $n>0$. We denote by $f:X(K)\to Y(K)$ the restriction of the projection morphism $Y\times_K \mathbb{G}_{m,K}^n\to Y$.
 Let $T$ be any topological space. Then the set of points $y$ in $Y(K)$ such that the closure of $\trop(f^{-1}(y))$ in $\RR^n$ is homeomorphic to $T$ is a semialgebraic subset of $Y$.
\end{prop}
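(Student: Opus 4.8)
The plan is to partition $Y(K)$ into finitely many semialgebraic subsets on each of which the homeomorphism type of $\overline{\trop(f^{-1}(y))}$ is constant; the set in the statement is then the union of those pieces on which this constant type is homeomorphic to $T$ (possibly the empty union), hence semialgebraic. Working locally on $Y$, I may assume that $Y = \Spec B$ is affine and that $X$ is the zero locus of a single Laurent polynomial $F = \sum_{u \in A} b_u x^u$, with $A \subset \ZZ^n$ finite and each $b_u \in B$ not identically zero (that $X$ be given by one equation is the natural reading of ``hypersurface'' here; if one only assumes $X$ of pure codimension one, one passes first to $X_{\red}$ and uses that $\mathbb{G}_{m,K}^n$ is factorial). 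For $y \in Y(K)$ the fiber $f^{-1}(y)$ is the subscheme of $\mathbb{G}_{m,K}^n$ defined by $F_y = \sum_u b_u(y) x^u$, and by the description of tropicalizations in \S\ref{sec:tropicalization}, extended in the usual way from curves in a torus to hypersurfaces in higher dimensional tori, the closure of $\trop(f^{-1}(y))$ in $\RR^n$ is the corner locus of $v \mapsto \min_u\{\langle u, v\rangle + \val(b_u(y))\}$, that is, the tropical hypersurface dual to the regular subdivision $\mathcal S(y)$ of $\conv(\supp F_y)$ induced by the height vector $(\val(b_u(y)))_{u \in \supp F_y}$.

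The combinatorial heart of the argument is that this tropical hypersurface depends, up to homeomorphism, only on the pair $(\supp F_y, \mathcal S(y))$, and that only finitely many such pairs occur. The latter is clear: $\supp F_y$ ranges over the finitely many subsets $A' \subseteq A$, and for each $A'$ there are only finitely many regular subdivisions of $\conv(A')$, as these correspond to the faces of the secondary polytope of $A'$. For the former, fix a regular subdivision $\mathcal S$ of $\conv(A')$; as the height vector ranges over the relative interior of the secondary cone of $\mathcal S$, the face poset of the dual tropical hypersurface stays constant, and so do the affine hull and the recession cone of each of its closed cells, while the bounded faces of those cells vary continuously. Consequently any two tropical hypersurfaces arising from heights in this relatively open cone are homeomorphic (indeed PL-isotopic in $\RR^n$). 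Hence only finitely many homeomorphism types of $\overline{\trop(f^{-1}(y))}$ occur as $y$ runs over $Y(K)$; in particular, if $T$ is homeomorphic to none of them, the set in the statement is empty and we are done.

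It remains to show that for a fixed pair $c = (A', \mathcal S)$ the set $Y_c = \{\, y \in Y(K) : \supp F_y = A' \text{ and } \mathcal S(y) = \mathcal S \,\}$ is semialgebraic; the set in the statement is then the finite union of those $Y_c$ whose associated homeomorphism type is homeomorphic to $T$. (One cannot simply invoke the previous proposition here, since $Y_c$ meets uncountably many of the loci $\{ y : \trop(f^{-1}(y)) = \Gamma \}$.) The condition $\supp F_y = A'$ says that $b_u(y) \neq 0$ for $u \in A'$ and $b_u(y) = 0$ for $u \in A \smallsetminus A'$, which is constructible, hence semialgebraic by Examples~\ref{exam:semialg}(1). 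On this locus, the condition $\mathcal S(y) = \mathcal S$ is expressed by finitely many linear equalities and (strict or non-strict) inequalities with integer coefficients in the valuations $\val(b_u(y))$, $u \in A'$ --- namely, the relations defining the relative interior of the secondary cone of $\mathcal S$. Each such relation, say $\sum_u m_u \val(b_u(y))$ compared to $0$, can be rewritten by collecting the positive and the negative coefficients $m_u$ as a relation of the same type between $\val(P(y))$ and $\val(Q(y))$ for suitable regular functions $P, Q$ on $Y_c$ (monomials in the $b_u$), and therefore defines a semialgebraic set by Examples~\ref{exam:semialg}(3). Thus $Y_c$ is a finite Boolean combination of semialgebraic subsets of $Y$, which completes the argument.

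The step requiring the most care is the topological claim in the second paragraph. Since $T$ may be an arbitrary topological space, one genuinely needs a homeomorphism between tropical hypersurfaces dual to combinatorially equal regular subdivisions, not just a combinatorial isomorphism of the underlying polyhedral complexes; what makes this available is that the deformation of the height vector can be kept inside a single secondary cone, which upgrades the combinatorial isomorphism to an ambient PL-isotopy of $\RR^n$. I would either invoke this from the standard theory relating tropical hypersurfaces to regular subdivisions and secondary fans, or build the homeomorphism cell by cell, matching up the closed cells of the two complexes (each a polyhedron with the same affine hull and the same recession cone) along their induced boundary decompositions and gluing.
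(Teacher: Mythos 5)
Your proposal is correct and follows essentially the same route as the paper: stratify the base by the support (Newton polytope) of the fiber equation and then by the Newton subdivision, note that the homeomorphism type of the closed tropical hypersurface depends only on the subdivision and that only finitely many subdivisions occur, and observe that fixing a subdivision imposes semialgebraic conditions (linear relations among valuations of the coefficients). The paper merely asserts the key topological fact that the subdivision determines the homeomorphism type; your secondary-cone/PL-isotopy justification supplies the detail it leaves implicit.
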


\begin{proof}
Given a family of hypersurfaces in the torus $\mathbb{G}_{m,K}^n$, after stratifying the base by the Newton polytope of the defining equation, we may assume that the Newton polytope is constant.

Now, the tropicalization of a hypersurface in $\mathbb{G}_{m,K}^n$ has the structure of a regular polyhedral complex, which is dual to the Newton subdivision of the Newton polytope. In particular, the homeomorphism type of the tropical hypersurface is determined by the Newton subdivision.  There are finitely many possibilities for the Newton subdivision, and the condition of the defining equation having fixed Newton subdivision is semialgebraic, given simply by linear inequalities on the valuations of the coefficients of the defining equation.
\end{proof}

\begin{rem}
 Although we will not need it in this paper, we want to point out a related result involving Berkovich skeletons of stable marked curves over $K$. Let $f:X\to Y$ be a morphism of $K$-schemes of finite type, and let $\sigma_1,\ldots,\sigma_n:Y\to X$ be sections of $f$ whose images are disjoint. Assume that $f^{-1}(y)$, marked with the points $(\sigma_1(y),\ldots,\sigma_n(y))$ is a stable curve over $K$, for every $y$ in $Y(K)$. Then for every graph $\Gamma$, the set of points $y$ in $Y(K)$ such that the skeleton of $f^{-1}(y)$ is homeomorphic to $\Gamma$ is a semialgebraic subset of $Y$. This can be deduced from Example \ref{exam:semialg}\eqref{it:spsemialg} and the fact that the combinatorial types of Berkovich skeletons of stable genus $g$ curves with $n$ marked points correspond to the boundary strata of $\overline{\mathcal{M}}_{g,n}$ \cite{acp}.
\end{rem}

\subsection{The Grothendieck ring of semialgebraic sets}
 We define the category $\VF_K$ of semialgebraic sets over $K$ as follows. The objects in this category are the pairs $(X,S)$ with $X$ a $K$-scheme of finite type and $S$ a semialgebraic subset of $X$. A morphism $(X,S)\to (Y,T)$ in this category is a map $S\to T$ whose graph is a semialgebraic subset of $X\times_K Y$. It is clear that a composition of two such maps is again a morphism in $\VF_K$. We will often denote an object $(X,S)$ in the category $\VF_K$ simply by $S$, leaving $X$ implicit. The image and inverse image of a semialgebraic set under a morphism in $\VF_K$ are again semialgebraic.

 The Grothendieck group $K_0(\VF_K)$ of semialgebraic sets over $K$ is the free abelian group on isomorphism classes $[S]$ of semialgebraic sets $S$ over $K$ modulo the relations $$[S]=[T]+[S\smallsetminus T]$$ for all $K$-schemes of finite type $X$ and all semialgebraic subsets $T\subset S$ of $X$. Note that, in particular, $[\emptyset]=0$. The group $K_0(\VF_K)$ has a unique ring structure such that $$[S]\cdot [S']=[S\times S']$$ in $K_0(\VF_K)$ for all semialgebraic sets $S$ and $S'$.

\subsection{The Grothendieck ring of algebraic varieties}
For every field $F$, the Grothendieck ring of $F$-varieties $K_0(\Var_F)$ is the free abelian group on isomorphism classes $[X]$ of $F$-schemes $X$ of finite type modulo the scissor relations $[X]=[Y]+[X\smallsetminus Y]$ for all $F$-schemes of finite type $X$ and all closed subschemes $Y$ of $X$. The ring structure on $K_0(\Var_F)$ is determined by the property that $$[X]\cdot [X']=[X\times_F X']$$ for all $F$-schemes of finite type $X$ and $X'$. We set $\LL=[\AA^1_F]$. If $F$ is algebraically closed and of characteristic zero, which is the only case we will need, then $K_0(\Var_F)$ is canonically isomorphic to the Grothendieck ring of definable sets over the field $F$ (see \S3.7 and \S3.8 in \cite{NiSe-K0}). It is clear from the definitions that there exists a unique ring morphism $$K_0(\Var_K)\to K_0(\VF_K)$$ that sends the class of a $K$-scheme of finite type $X$ in $K_0(\Var_K)$ to the class of $X(K)$ in  $K_0(\VF_K)$ (where we view $X(K)$ as a semialgebraic subset of $X$).

For every integer $n\geq 0$, we define the Grothendieck group $K_0(\Var_F[n])$ in the same way as $K_0(\Var_F)$, except that we only consider $F$-schemes of finite type $X$, $Y$ of dimension at most $n$ in the definition of the generators and the scissor relations.

For every $F$-scheme of finite type $X$ of dimension at most $n$, we will denote by $[X]_n$ its class in $K_0(\Var_F[n])$. We give the direct sum $$K_0(\Var_F[\ast])=\bigoplus_{n\geq 0}K_0(\Var_F[n])$$ the structure of a graded ring by setting $$[X]_m\cdot [X']_n=[X\times_F X']_{m+n}$$ for all integers $m,n\geq 0$ and all $F$-schemes of finite type $X,X'$ of dimension at most $m$, respectively $n$.

\subsection{The Grothendieck ring of $G$-rational polyhedra}
For every nonnegative integer $n$, we write $G[n]$ for the category of definable subsets in $G^n$ with integral affine transformations. A definable subset of $G^n$ is a finite Boolean combination of $G$-rational polyhedra in $G^n$. A morphism $S\to T$ in $G[n]$ is a bijective map $f:S\to T$ such that there exists a finite partition of $S$ into definable subsets $S_i$ satisfying the property that the restriction of $f$ to $S_i$ can be written as $x\mapsto Mx+b$ with $M\in \mathrm{GL}_n(\ZZ)$ and $b\in G^n$. In particular, every morphism in this category is an isomorphism.
   Now the Grothendieck group $K_0(G[n])$ is the free abelian group on isomorphism classes $[S]_n$ of definable subsets $S$ of $G^n$ modulo the relations $$[S]_n=[T]_n+[S\smallsetminus T]_n$$ for all definable subsets $T\subset S$ of $G^n$. We again give the direct sum $$K_0(G[\ast])=\bigoplus_{n\geq 0}K_0(G[n])$$ the structure of a graded ring by setting $$[S]_m\cdot [T]_n=[S\times T]_{m+n}$$ for all definable subsets $S,T$ of $G^m$ and $G^n$, respectively. The class of a point in $G^n$ will be denoted by $[1]_n$.

Every definable subset of $G^n$ has a natural dimension by the theory of $o$-minimal structures. It is equal to the smallest non-negative integer $d$ such that $S$ lies in a finite union of spaces of the form
  $$(V\otimes_{\QQ}\RR + g)\cap G^n$$ where $V$ is a $d$-dimensional linear subspace of $\QQ^n$ and $g$ is an element of $G^n$.    This notion of dimension agrees with the topological dimension of the closure of $S$ in $\RR^n$. By convention, the empty set has  dimension $-\infty$. If $S$ is a definable subset of $G^n$ of dimension at most $m$, it is not difficult to see that there exists a finite partition of $S$ into definable subsets $S_1,\ldots,S_r$ such that, for each $i$, there exists an isomorphism in $G[n]$ between $S_i$ and a definable subset $T_i$ of $G^m\subset G^n$. The element $[T_1]+\cdots +[T_r]$ of $K_0(G[m])$ does not depend on the choice of the partition or the sets $T_i$, and will be denoted by $[S]_m$.

 Beware that a homothety with factor different from $\pm 1$ is {\em not} a morphism in $G[n]$ for $n>0$. Nevertheless, we can still make the following identifications in the Grothendieck group.

\begin{prop}\label{prop:scale}
For every positive integer $n$ and every element $g>0$ in $G$, we denote by $\Delta^o_{n,g}$ the open simplex
 $$\Delta^o_{n,g}=\{x\in G^{n}\,|\,\sum_{i=1}^n x_i<g\mbox{ and }x_i>0\mbox{ for all }i\}.$$
Then $$[\Delta^o_{n,g}]_n=(-1)^{n}[1]_n$$ in $K_0(G[n])$.
\end{prop}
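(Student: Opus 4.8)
The plan is to compute $[\Delta^o_{n,g}]_n$ by induction on $n$, using the scissor relations to slice the open simplex into pieces whose classes are already known. For $n = 1$, the set $\Delta^o_{1,g} = \{x \in G \mid 0 < x < g\}$ is a bounded open interval; I would first check the base case $[\Delta^o_{1,g}]_1 = -[1]_1$ directly. To do this, consider the closed interval $[0,g] \cap G$ together with its two endpoints: writing $[0,g] = \{0\} \sqcup \Delta^o_{1,g} \sqcup \{g\}$ gives $[\text{closed interval}]_1 = 2[1]_1 + [\Delta^o_{1,g}]_1$. So it suffices to show that a closed bounded interval has class $[1]_1$ in $K_0(G[1])$; equivalently, by translation, that $[0,g] \cap G$ and a point have the same class. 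This is the genuinely nontrivial input: it expresses that the Grothendieck ring of the $o$-minimal structure on $G$ does not see "length," only combinatorial/Euler-characteristic data. I expect the cleanest route is to compare $[0,g]$ with $[0,g']$ for two different positive $g, g'$: after subdividing, translating the relevant half by an element of $G$, and using that a half-open interval $(0,g] \cap G$ has the same class as $(0, g'] \cap G$ when $g' \le g$ (slice $(0,g] = (0,g'] \sqcup (g', g]$ and translate $(g',g]$ down), one deduces that the class of a closed bounded interval is independent of its length, hence — shrinking to a point as a limiting combinatorial statement, or rather: showing $[(0,g]]_1$ is idempotent-like — that it equals $[1]_1$. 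This base case is the main obstacle; everything else is bookkeeping.

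Granting the case $n=1$, I would run the induction as follows. Fix the coordinate hyperplane direction $e_n$ and slice $\Delta^o_{n,g}$ along the "last coordinate" fibration. For each value $x_n = t \in G$ with $0 < t < g$, the fiber is the open simplex $\{(x_1,\dots,x_{n-1}) \mid x_i > 0,\ \sum_{i<n} x_i < g - t\}$, which is isomorphic in $G[n-1]$ to $\Delta^o_{n-1, g-t}$, whose class is $(-1)^{n-1}[1]_{n-1}$ by induction. The point is to organize this into an honest scissor computation in $K_0(G[n])$, not an integral. The standard trick: decompose $\Delta^o_{n,g}$ as a disjoint union indexed by which of the coordinate inequalities is "active" after cutting by the hyperplanes $x_i = x_j$ or by a generic sweeping hyperplane. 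Concretely, I would use the relation coming from the half-open prism $P = \{x \in G^n \mid x_i > 0,\ \sum x_i \le g\}$: write $P = \Delta^o_{n,g} \sqcup \partial$, where $\partial = \{x \in P \mid \sum x_i = g\}$ is (after the integral-affine change of coordinates eliminating $x_n = g - \sum_{i<n}x_i$) a copy of $\Delta^o_{n-1,g}$ sitting in $G^{n-1}$, so $[\partial]_n = [\Delta^o_{n-1,g}]_{n-1} \cdot [1]_1$. Meanwhile $P$ itself can be shown to have class $[1]_n$ by an inductive argument of the same flavor as the $n=1$ case — it is a "closed bounded region," and one shows its class is insensitive to the parameter $g$ and equals that of a point. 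Then $[\Delta^o_{n,g}]_n = [1]_n - (-1)^{n-1}[1]_{n-1}\cdot[1]_1 = [1]_n + (-1)^n [1]_n = \dots$ — so I would instead arrange the decomposition of $P$ so that $P = \Delta^o_{n,g} \sqcup (\text{face})$ with the face contributing $(-1)^{n-1}[1]_n$ and $[P]_n = 0$, whence $[\Delta^o_{n,g}]_n = (-1)^n [1]_n$; the precise sign bookkeeping is where care is needed.

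The cleanest packaging, which I would actually write, avoids carrying a contractible prism around: instead I would prove the auxiliary fact that for the open interval, $[(0,g)]_1 = -[1]_1$, and then build $\Delta^o_{n,g}$ up coordinate by coordinate as a fiber bundle over $(0, g)$ in the last coordinate with fiber $\Delta^o_{n-1, g - x_n}$, using additivity/Fubini for the Grothendieck ring of $G[\ast]$ over a definable base with definably-varying fibers (this is legitimate because the total space has a finite definable partition trivializing the fiber type up to $G[n-1]$-isomorphism, since $\Delta^o_{n-1,h}$ has the same class for all $h > 0$). That gives $[\Delta^o_{n,g}]_n = [(0,g)]_1 \cdot (-1)^{n-1}[1]_{n-1} = (-1)\cdot(-1)^{n-1}[1]_n = (-1)^n[1]_n$. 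So the real content reduces to two lemmas: (i) a closed bounded $G$-interval has the class of a point in $K_0(G[1])$ (equivalently, the open interval has class $-[1]_1$), and (ii) the class of $\Delta^o_{n,h}$ in $K_0(G[n-1])$, resp.\ the fibered computation, is independent of the size parameter $h > 0$. Both (i) and (ii) come down to the same elementary move — cut a half-open interval and translate by an element of $G$ — so once (i) is in hand, (ii) and the induction are routine. The main obstacle, to reiterate, is (i): verifying that "interval length is invisible" in this Grothendieck ring, which ultimately rests on the $o$-minimality of the ordered divisible abelian group $G$.
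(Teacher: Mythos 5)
The central gap is your lemma (i), which you yourself flag as the crux but never actually prove, and the route you sketch for it cannot work. Slicing $(0,g]=(0,g']\sqcup(g',g]$ and translating $(g',g]$ down only yields $[(0,g]]_1=[(0,g']]_1+[(0,g-g']]_1$, i.e.\ additivity of the class under concatenation, not independence of the length; and ``shrinking to a point as a limiting combinatorial statement'' is not an operation available in a Grothendieck group. More fundamentally, no argument that stays inside \emph{bounded} subsets of $G$ and uses only scissor relations and piecewise maps $x\mapsto\pm x+b$ can prove (i): total length is additive under definable partitions and invariant under such bijections, so it survives every relation you allow yourself, and it assigns $g\neq 0$ to $(0,g]$ but $0$ to a point. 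The indispensable ingredient is an unbounded set: the paper observes that $[0,\infty)$ and $[g,\infty)$ are isomorphic in $G[1]$ via translation, so $[[0,g)]_1=[[0,\infty)]_1-[[g,\infty)]_1=0$, whence $[(0,g)]_1=-[1]_1$. Your closing claim that both (i) and (ii) ``come down to cutting a half-open interval and translating by an element of $G$'' is therefore not repairable as stated.

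The induction step as you package it inherits the same problem and adds two further slips. In the prism version, $P=\{x_i>0,\ \sum_i x_i\le g\}$ is not closed and its class is $0$, not $[1]_n$; you notice the sign mismatch and pivot to wanting $[P]_n=0$, but that vanishing is precisely the nontrivial point. The paper gets it from unboundedness again: the unbounded prism $S_0=\{x\in G^n\,:\,x_i>0,\ \sum_{i<n}x_i<g\}$ is isomorphic in $G[n]$, via an integral shear composed with translation by $g$ in the last coordinate, to $S_2=\{x\in S_0\,:\,\sum_i x_i>g\}$, so $[P]_n=[S_0]_n-[S_2]_n=0$ and $[\Delta^o_{n,g}]_n=-[S_1]_n=(-1)^n[1]_n$ with $S_1\cong\Delta^o_{n-1,g}\times\{0\}$. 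Your preferred ``Fubini'' packaging is also unjustified: fibers having equal \emph{classes} does not produce a finite definable partition trivializing the family. Indeed $\Delta^o_{n-1,h}$ and $\Delta^o_{n-1,h'}$ are not isomorphic in $G[n-1]$ for $h\neq h'$ (volume is preserved by piecewise $\mathrm{GL}_{n-1}(\ZZ)$-affine bijections), so the family over $(0,g)$ with fibers $\Delta^o_{n-1,g-x_n}$ is not piecewise trivial, and the product formula $[\Delta^o_{n,g}]_n=[(0,g)]_1\cdot(-1)^{n-1}[1]_{n-1}$ would itself require an argument of the same strength as the proposition.
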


\begin{proof}
We will proceed by induction on $n$. For $n=1$, the intervals $[0,\infty)$ and $[g,\infty)$ in $G$ are isomorphic in $G[1]$ (via the translation over $g$) so that the class of their difference $[0,g)$ vanishes in $K_0(G[1])$. Hence, the class in $K_0(G[1])$ of every semi-open bounded interval with endpoints in $G$ is equal to zero. Thus the class of any bounded open interval with endpoints in $G$ is equal to minus $[1]_1$, the class of a point.

Now assume that $n>1$ and that the result holds for all strictly smaller values of $n$. We consider the definable subsets
   \begin{eqnarray*}S_0&=&\{x\in G^{n}\,|\,\sum_{i=1}^{n-1} x_i<g\mbox{ and }x_i>0\mbox{ for }i=1,\ldots,n\}
   \\S_1&=&\{x\in G^{n}\,|\,\sum_{i=1}^{n} x_i=g\mbox{ and }x_i>0\mbox{ for }i=1,\ldots,n\}
   \\S_2&=&\{x\in S_0\,|\,\sum_{i=1}^{n} x_i>g \}
   \end{eqnarray*}
of $G^n$. Then $\{\Delta^o_{n,g},S_1,S_2\}$ is a partition of $S_0$, and $S_0$ and $S_2$ are isomorphic in $G[n]$. Thus,
   $$[\Delta^o_{n,g}]_n=-[S_1]_n$$ in $K_0(G[n])$. But $S_1$ is isomorphic to $\Delta^o_{n-1,g}\times \{0\}$ in $G[n]$, so that
 $$[S_1]_n=[1]_1\cdot [\Delta^o_{n-1,g}]_{n-1}=(-1)^{n-1}[1]_n$$ by the induction hypothesis. This concludes the proof.
 \end{proof}

\subsection{The motivic volume of Hrushovski-Kazhdan}\label{sec:KVF}

We will now recall how the theory of motivic integration of Hrushovski-Kazhdan \cite{HK06} gives rise to a ring morphism
$$\Vol:K_0(\VF_K)\to K_0(\Var_k).$$ If $S$ is a semialgebraic set over $K$, then we will write $\Vol(S)$ for $\Vol([S])$ and we call this object the {\em motivic volume} of $S$.
 If $X$ is a $K$-scheme of finite type, then we write $\Vol(X)$ instead of $\Vol(X(K))$.
 In order to keep our presentation accessible to readers with a background in algebraic geometry, we will state the construction in slightly different terms than those used in \cite{HK06}, but it is not difficult to see that it yields the same result.

One of the central results of Hrushovski and Kazhdan's theory is the construction of a ring isomorphism
\begin{equation}\label{eq:theta}
\Theta':K_0(\VF_K)\to (K_0(\RES_K[\ast])\otimes_{\ZZ[\tau]}K_0(G[\ast]))/I_{\spe}.
\end{equation}
Let us explain the different notations, as well as the main ingredients of this construction. Since, in our setting, the value group of $K$ is divisible and $k$ has characteristic zero, the Grothendieck ring $K_0(\RES_K[\ast])$ is simply the graded ring
$$K_0(\Var_k[\ast])=\bigoplus_{n\geq 0}K_0(\Var_k[n]).$$
We view $K_0(\RES_K[\ast])$ and $K_0(G[\ast])$ as graded $\ZZ[\tau]$-algebras {\em via} the morphisms
$$\begin{array}{l}
\ZZ[\tau]\to K_0(G[\ast]):\tau\mapsto [1]_1,
\\[2pt] \ZZ[\tau]\to  K_0(\RES_K[\ast]): \tau\mapsto [\mathbb{G}_{m,k}]_1.
\end{array}$$
Hrushovski and Kazhdan defined a ring morphism $$\Theta:K_0(\RES_K[\ast])\otimes_{\ZZ[\tau]}K_0(G[\ast]))\to K_0(\VF_K)$$ that is characterized by the following properties. If $S$ is a definable subset of $G^n$, then  the morphism $\Theta$ maps  $[S]_n\in K_0(G[n])$ to the class in $K_0(\VF_K)$ of the semialgebraic subset $\trop^{-1}(S)$ of $K^n$. By Noether normalization, we can write every element in $K_0(\RES_K[n])$ as a $\ZZ$-linear combination of classes of $k$-schemes of finite type $X$ that admit a quasi-finite morphism $f:X\to \AA^n_k$ with locally closed image. Since $k$ has characteristic zero, we can assume that $X$ is \'etale over $f(X)$ (endowed with its reduced induced structure).
Then by \cite[18.1.1]{ega4.4}, we can even suppose that there exists an \'etale $\AA^n_R$-scheme of finite type $\cX$ and a closed immersion $X\to \cX_k$ such that the restriction of $\cX_k\to \AA^n_k$ to $X$ coincides with $f$. The morphism $\Theta$ maps $[X]_n \in K_0(\Var_k[n])$ to the class in $K_0(\VF_K)$ of the semialgebraic set $\spe_{\cX}^{-1}(X)$. This definition is independent of the choice of $\cX$, because $R$ is henselian.

Hrushovski and Kazhdan proved in \cite[8.8 and 10.3]{HK06} that the morphism $\Theta$ is surjective, and that its kernel is the ideal $I_{\spe}$ generated by the element $$[\Spec k]_0+[G_{>0}]_1-[\Spec k]_1.$$  Note that this element indeed belongs to the kernel of $\Theta$ since the image of $[\Spec k]_0$ is the class of a point in $K_0(\VF_K)$, the image of $[G_{>0}]_1$ is the class of the punctured open unit disc $$\{x\in K^\times\,|\,\val(x)>0\},$$ and the image of $[\Spec k]_1$ is the class of the open unit disc $\spe^{-1}_{\AA^1_R}(0)$ in $K$. Thus the morphism $\Theta$ factors through an isomorphism $$(K_0(\RES_K[\ast])\otimes_{\ZZ[\tau]}K_0(G[\ast]))/I_{\spe}\to  K_0(\VF_K),$$ and $\Theta'$ is, by definition, the inverse of this isomorphism.

The morphism $$\Vol:K_0(\VF_K)\to K_0(\Var_k)$$ is now obtained by composing $\Theta'$ with the ring morphism
 $$\Psi:(K_0(\RES_K[\ast])\otimes_{\ZZ[\tau]}K_0(G[\ast]))/I_{\spe}\to K_0(\Var_k)$$
defined in \cite[10.5(4)]{HK06} (we implicitly make use of the isomorphism in \cite[10.3]{HK06}). Let us unravel its construction. For every $n\geq 0$ we have an obvious group morphism $$K_0(\Var_k[n])\to K_0(\Var_k):[X]_n\mapsto [X]$$ and these give rise to a ring morphism
\begin{equation}\label{eq:res}
K_0(\RES_K[\ast])\to K_0(\Var_k).
\end{equation}
On the other hand, we can consider the Euler characteristic
  $$\chi':K_0(G[\ast])\to \ZZ$$
from \cite[9.6]{HK06} that sends the class in $K_0(G[n])$ of a definable subset $S$ of $G^n$ to
  $$\chi'(S)=\lim_{r\to +\infty}\chi(S\cap [-r,r]^{n}),$$
where $\chi(\cdot)$ is the usual $o$-minimal Euler characteristic \cite[Ch4\S2]{vandenDries}.
The value of $\chi(S\cap [-r,r]^{n})$ stabilizes for sufficiently large $r$.

Equivalently, the ring morphism $\chi'$ is characterized by the property that it sends the class of any closed polyhedron to $1$; the image of the class of an arbitrary definable set can then be computed by cell decomposition, using the additivity of $\chi'$.

\begin{prop}\label{prop:chi-poly}
Let $\gamma$ be a non-empty $G$-rational polyhedron in $\RR^n$, for some $n\geq 0$, and denote by $\mathring{\gamma}$ its relative interior.
Then $$\chi'(\mathring{\gamma}\cap G^n)=(-1)^{\mathrm{dim}(\gamma)}$$ if $\gamma$ is bounded;  $\chi'(\mathring{\gamma}\cap G^n)=1$ if $\gamma$ is an affine subspace of $\RR^n$; and $\chi'(\mathring{\gamma}\cap G^n)=0$ in all other cases.
\end{prop}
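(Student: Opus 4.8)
The plan is to reduce the statement to a computation of the ordinary topological Euler characteristic of the relative boundary $\partial\gamma:=\gamma\smallsetminus\mathring\gamma$, which is the finite union of the proper faces of $\gamma$. Since $\gamma$ is a non-empty closed $G$-rational polyhedron, the characterization of $\chi'$ recalled just above the statement gives $\chi'(\gamma\cap G^n)=1$; as $\chi'$ is additive and $\gamma\cap G^n$ is the disjoint union of $\mathring\gamma\cap G^n$ and $\partial\gamma\cap G^n$, this yields
\[
\chi'(\mathring\gamma\cap G^n)=1-\chi'(\partial\gamma\cap G^n),
\]
so everything comes down to evaluating $\chi'$ on the set $\partial\gamma\cap G^n$.

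The first key step would be the following lemma: for any finite union $\Sigma=P_1\cup\dots\cup P_m$ of closed $G$-rational polyhedra in $\RR^n$, one has $\chi'(\Sigma\cap G^n)=\chi_{\mathrm{top}}(\Sigma)$. Indeed, inclusion--exclusion together with additivity of $\chi'$ gives
\[
\chi'(\Sigma\cap G^n)=\sum_{\emptyset\neq I\subseteq\{1,\dots,m\}}(-1)^{|I|+1}\,\chi'\bigl(\bigl(\textstyle\bigcap_{i\in I}P_i\bigr)\cap G^n\bigr),
\]
and each intersection $\bigcap_{i\in I}P_i$ is again a closed $G$-rational polyhedron, hence contributes $1$ if it is non-empty and $0$ otherwise. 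The resulting alternating sum is exactly the topological Euler characteristic of the nerve of the closed cover $\{P_i\}$ of $\Sigma$, and since every non-empty finite intersection of the $P_i$ is convex, hence contractible, the nerve lemma shows that this nerve is homotopy equivalent to $\Sigma$, giving $\chi'(\Sigma\cap G^n)=\chi_{\mathrm{top}}(\Sigma)$.

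It then remains to compute $\chi_{\mathrm{top}}(\partial\gamma)$, a purely topological question. If $\gamma$ is an affine subspace (in particular if $\dim\gamma=0$), then $\partial\gamma=\emptyset$ and $\chi'(\mathring\gamma\cap G^n)=1$. If $\gamma$ is bounded of dimension $d\geq 1$, then $\partial\gamma$ is a sphere $S^{d-1}$, so $\chi_{\mathrm{top}}(\partial\gamma)=1+(-1)^{d-1}$ and hence $\chi'(\mathring\gamma\cap G^n)=(-1)^{d}$. In the remaining case $\gamma$ is unbounded and has a proper face; by an integral affine change of coordinates (legitimate in $G[n]$) we write $\gamma\cong\gamma_0\times L$ with $L$ the rational lineality space of $\gamma$ and $\gamma_0$ line-free, so that $\partial\gamma=\partial\gamma_0\times L$ and, by multiplicativity of $\chi'$ and $\chi'(L\cap G^{\dim L})=1$, we reduce to $\gamma_0$. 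When $\gamma_0$ is line-free of dimension $d$ with non-zero (hence pointed) recession cone $\rec(\gamma_0)$, radial projection from a point of $\mathring{\gamma_0}$ identifies $\partial\gamma_0$ with $S^{d-1}\smallsetminus\bigl(\rec(\gamma_0)\cap S^{d-1}\bigr)$; the removed set is a non-empty closed contractible subset of the sphere contained in an open hemisphere, so its complement has trivial reduced homology, $\chi_{\mathrm{top}}(\partial\gamma_0)=1$, and $\chi'(\mathring\gamma\cap G^n)=0$.

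I expect the main obstacle to be this last case, i.e.\ the topological control of $\partial\gamma$ for unbounded $\gamma$; in particular one has to isolate the contribution of the lineality space, which is precisely where the hypothesis that $\gamma$ is not an affine subspace enters and where the product decomposition $\gamma\cong\gamma_0\times L$ becomes essential --- it separates the pointed recession behaviour (which makes $\chi'$ vanish) from the linear directions (each contributing a factor $1$) before the radial-projection argument is applied. A secondary technical point is the lemma identifying $\chi'$ with $\chi_{\mathrm{top}}$ on closed $G$-rational polyhedral complexes: one must verify that the $G$-rational combinatorics of the cover faithfully reflects the topology of its closure in $\RR^n$ and that the nerve lemma applies to the (finite, closed, convex) cover. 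Alternatively, one can bypass the nerve lemma and argue by induction on $\dim\gamma$, writing $\chi'(\partial\gamma\cap G^n)=\sum_{F}\chi'(\mathring F\cap G^n)$ as a sum over the proper faces $F$ and invoking the induction hypothesis, the needed combinatorial inputs then being the Euler--Poincar\'e relation for polytopes and the contractibility of the bounded subcomplex of a line-free polyhedron.
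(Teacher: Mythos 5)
Your overall strategy is sound and genuinely different from the paper's: the paper works directly with the truncation definition $\chi'(S)=\lim_{r}\chi(S\cap[-r,r]^n)$ and computes the o-minimal Euler characteristic of the truncated set, whereas you normalize by $\chi'=1$ on closed polyhedra, use additivity to reduce to $\chi'(\partial\gamma\cap G^n)$, identify $\chi'$ with the homotopy-invariant Euler characteristic on finite unions of closed $G$-rational polyhedra via inclusion--exclusion and the nerve lemma, and then compute the topology of $\partial\gamma$. Your bounded case, affine-subspace case, and line-free unbounded case (radial projection plus Alexander duality) are correct, and the nerve-lemma step is unproblematic for a finite closed convex cover (triangulate so that each $P_i$ is a subcomplex, or use the closed ANR version); your suggested alternative via the Euler--Poincar\'e relation would also work.

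The gap is in the final case split. After writing $\gamma\cong\gamma_0\times L$ you only treat the situation where the line-free factor $\gamma_0$ has nonzero recession cone, but ``$\gamma$ unbounded and not an affine subspace'' does not imply this: $\gamma$ can be unbounded solely because of its lineality space, with $\gamma_0$ a polytope of positive dimension, e.g.\ $\gamma=\RR\times[0,1]\subset\RR^2$. There your own machinery gives $\chi'(\partial\gamma\cap G^2)=\chi'((\RR\times\{0,1\})\cap G^2)=2$, hence $\chi'(\mathring{\gamma}\cap G^2)=-1$, not $0$ --- and this value is in fact forced: additivity together with $\chi'=1$ on the three closed polyhedra $\RR\times[0,1]$, $\RR\times\{0\}$, $\RR\times\{1\}$ gives $\chi'(\mathring{\gamma}\cap G^2)=-1$, and the truncation definition gives the same. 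So this case cannot be folded into the asserted answer $0$: the correct general statement is $\chi'(\mathring{\gamma}\cap G^n)=(-1)^{\dim\gamma-\dim\rec(\gamma)}$ when $\rec(\gamma)$ is a linear subspace (this subsumes the bounded and affine cases) and $0$ otherwise, so the trichotomy as printed fails precisely when $\rec(\gamma)$ is a positive-dimensional linear subspace and $\gamma$ is not affine. The paper's own proof glosses over the same case (its claim that the relative boundary of $\mathring{\gamma}\cap[-r,r]^{\dim\gamma}$ is homeomorphic to $\RR^{\dim\gamma-1}$ fails for $\RR\times[0,1]$), and in all applications in the paper the recession cones involved are pointed, so only the cases you did handle are ever used; still, to make your argument complete you must either add that hypothesis explicitly or record the corrected value in the lineality case rather than claim the value $0$ there.
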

\begin{proof}
 In the bounded case, $\chi'(\mathring{\gamma}\cap G^n)$ coincides with the usual Euler characteristic  (with compact supports)
 of $\mathring{\gamma}$, and the assertion is obvious.

  In the unbounded case, $$\chi'(\mathring{\gamma}\cap G^n)=\chi(\mathring{\gamma} \cap [-r,r]^{\mathrm{dim}(\gamma)})$$ when $r$ is sufficiently large.
  If $\gamma$ is an affine subspace of $\RR^n$, then $\mathring{\gamma}=\gamma$ and $\gamma \cap [-r,r]^{\mathrm{dim}(\gamma)}$ is homeomorphic to $[0,1]^{\mathrm{dim}(\gamma)}$, so that the Euler characteristic equals $1$. Otherwise,
   for large $r$, the space $\mathring{\gamma}\cap\, (-r,r)^{\mathrm{dim}(\gamma)}$ is homeomorphic to $\RR^{\mathrm{dim}(\gamma)}$ while
   the relative boundary of $\mathring{\gamma}\cap [-r,r]^{\mathrm{dim}(\gamma)}$ is homeomorphic to $\RR^{\mathrm{dim}(\gamma)-1}$, so that $$\chi(\mathring{\gamma} \cap [-r,r]^{\mathrm{dim}(\gamma)})=0.$$
 \end{proof}

We use $\chi'$ to define a group morphism \begin{equation}\label{eq:chi}
   K_0(G[n])\to K_0(\Var_k):\alpha\mapsto \chi'(\alpha)(\LL-1)^n,
\end{equation}
for every $n\geq 0$.  The morphisms \eqref{eq:res} and \eqref{eq:chi} together induce a ring morphism
   $$K_0(\RES_K[\ast])\otimes_{\ZZ[\tau]}K_0(G[\ast])\to K_0(\Var_k),$$
whose kernel contains $I_{\spe}$ because $\chi'(G_{>0})=0$. Thus it factors through a ring morphism
   $$\Psi:(K_0(\RES_K[\ast])\otimes_{\ZZ[\tau]}K_0(G[\ast]))/I_{\spe}\to K_0(\Var_k).$$

It will be useful to consider the behavior of the motivic volume under extensions of the valued field $K$.  Let $K'$ be an algebraically closed valued extension of $K$ of rank 1. The case of most interest to us will be the case where $K'$ is the completion of $K$.  For every semialgebraic set $S$ over $K$, we can consider its base change $S\times_K K'$ to $K'$, which is defined by the same formulas (this construction is well-defined because of quantifier elimination).

\begin{prop}\label{prop:complete}
 For every semialgebraic set $S$ over $K$, we have
 $\Vol(S\times_K K')=\Vol(S)$.
\end{prop}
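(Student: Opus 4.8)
The plan is to unwind the definition $\Vol=\Psi\circ\Theta'$ and to verify that each ingredient in this composition commutes with extension of scalars from $K$ to $K'$. Fix a height one valuation ring $R'$ of $K'$ dominating $R$; it is automatically henselian, of equal characteristic zero, with algebraically closed residue field $k'\supseteq k$ and divisible value group $G'\supseteq G$. When $K'$ is the completion of $K$ one has $k'=k$ and $G'=G$, and the asserted identity is literal; in general it is to be read through the base change homomorphism $K_0(\Var_k)\to K_0(\Var_{k'})$, as in the statement. (An alternative would be to invoke directly the results of Hrushovski and Kazhdan on the behaviour of their constructions under extension of the valued field, but the direct verification below is more transparent.)

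First I would record that base change induces graded ring homomorphisms at every level of the construction, compatible with the relevant $\ZZ[\tau]$-algebra structures: a homomorphism $K_0(\VF_K)\to K_0(\VF_{K'})$, $[S]\mapsto [S\times_K K']$, which is well defined because base change of a semialgebraic set is described by the same defining formulas and so preserves Boolean operations and the scissor relations; the usual base change $K_0(\RES_K[\ast])=K_0(\Var_k[\ast])\to K_0(\Var_{k'}[\ast])=K_0(\RES_{K'}[\ast])$, which is the identity when $k'=k$; and a homomorphism $K_0(G[\ast])\to K_0(G'[\ast])$ sending the class of a definable $S\subseteq G^n$ to the class of the subset of $(G')^n$ cut out by the same inequalities (a morphism in $G[n]$ is piecewise an integral affine map $x\mapsto Mx+b$ with $M\in\mathrm{GL}_n(\ZZ)$ and $b\in G^n\subseteq (G')^n$, hence base changes to a morphism in $G'[n]$).

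The crux is to check that the Hrushovski--Kazhdan morphism $\Theta$ commutes with base change, i.e.\ $\Theta_{K'}\circ(\text{base change})=(\text{base change})\circ\Theta_K$ as maps $K_0(\RES_K[\ast])\otimes_{\ZZ[\tau]}K_0(G[\ast])\to K_0(\VF_{K'})$. Since $\Theta$ is a ring homomorphism it suffices to check this on the two families of generators. For $[S]_n\in K_0(G[n])$ one has $\Theta_K([S]_n)=[\trop^{-1}(S)]$, and $\trop^{-1}(S)\times_K K'=\trop^{-1}(S')$ where $S'\subseteq (G')^n$ is the base change of $S$; this case is immediate. For a generator $[X]_n\in K_0(\Var_k[n])$ with $X$ \'etale over a locally closed subscheme of $\AA^n_k$, choose as in the construction an \'etale $\AA^n_R$-scheme $\cX$ of finite type and a closed immersion $X\hookrightarrow \cX_k$, so that $\Theta_K([X]_n)=[\spe_{\cX}^{-1}(X)]$; then $\cX':=\cX\times_R R'$ is \'etale over $\AA^n_{R'}$, $X':=X\times_k k'\hookrightarrow \cX'_{k'}$, and by the explicit description of such preimages in Example~\ref{exam:semialg}\eqref{it:spsemialg} the semialgebraic set $\spe_{\cX'}^{-1}(X')$ is cut out by the same valuation inequalities as $\spe_{\cX}^{-1}(X)$, hence equals $\spe_{\cX}^{-1}(X)\times_K K'$, while $\Theta_{K'}([X']_n)=[\spe_{\cX'}^{-1}(X')]$ because $R'$ is henselian. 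Since $\Theta$ is surjective with kernel $I_{\spe}$ generated by $[\Spec k]_0+[G_{>0}]_1-[\Spec k]_1$, whose base change is the analogous generator of $I_{\spe}$ over $K'$, the induced isomorphisms on the quotients are compatible with base change; inverting, so is $\Theta'$.

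Finally I would check that $\Psi$ commutes with base change. On the $\RES$ factor, $\Psi$ is the forgetful homomorphism $K_0(\Var_k[\ast])\to K_0(\Var_k)$, which visibly commutes with $K_0(\Var_k)\to K_0(\Var_{k'})$. On the $G$ factor, $\Psi$ is assembled from $\alpha\mapsto\chi'(\alpha)(\LL-1)^n$, so it is enough that $\chi'$ is invariant under $G\hookrightarrow G'$: a $G$-rational polyhedron, regarded as a subset of $\RR^n$, is literally unchanged when $G$ is enlarged to $G'$ (its defining inequalities have rational linear parts and constant term in $G\subseteq G'$), so its $o$-minimal Euler characteristic is unchanged --- concretely this is visible from Proposition~\ref{prop:chi-poly} --- and, by additivity and cell decomposition, so is $\chi'$ on any definable set. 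Composing, one obtains $\Vol(S\times_K K')=\Vol(S)$, as claimed. The main obstacle is verifying that $\Theta$ commutes with base change: one must be careful that the specialization generators $\spe_{\cX}^{-1}(X)$ behave correctly, which requires passing to a dominating valuation ring $R'$ of $K'$, base changing the \'etale integral model, and invoking henselianity for the well-definedness of $\Theta_{K'}$ on those classes. Everything else is formal once the definitions are unwound.
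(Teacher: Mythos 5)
Your proof is correct and follows essentially the same route as the paper, whose entire argument is the remark that $\Theta$ is compatible with base change from $K$ to $K'$, so the claim follows from the definition $\Vol=\Psi\circ\Theta'$. You simply spell out what the paper leaves implicit: the check of compatibility on the two families of generators (tropical preimages and $\spe_{\cX}^{-1}(X)$ for \'etale models, using henselianity of $R'$), the preservation of $I_{\spe}$, the invariance of $\chi'$ under $G\hookrightarrow G'$, and the reading of the equality through $K_0(\Var_k)\to K_0(\Var_{k'})$ when the residue field grows.
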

\begin{proof}
 This follows easily from the definition of the motivic volume, since the morphism $\Theta$ is compatible with base change from $K$ to $K'$.
\end{proof}

 \subsection{Tropical computation of the motivic volume}\label{sec:tropcomp}
In this section, we will prove some properties that can be used to compute the motivic volume in concrete cases.
 The most basic example is the following.
\begin{prop}\label{prop:smoothvol}
If $\cX$ is a smooth $R$-scheme of finite type of pure relative dimension $d$ and $Y$ is a constructible subset of $\cX_k$, then
$$[\spe_{\cX}^{-1}(Y)]=\Theta([Y]_d)$$ in $K_0(\VF_K)$, and
$$\Vol(\spe_{\cX}^{-1}(Y))=[Y]$$ in $K_0(\Var_k)$.

In particular, $[\cX(R)]=\Theta([\cX_k]_d)$ in $K_0(\VF_K)$, and $\Vol(\cX(R))=[\cX_k]$ in $K_0(\Var_k)$.
\end{prop}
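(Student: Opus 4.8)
The plan is to reduce the statement to the defining properties of the morphism $\Theta$ recalled in \S\ref{sec:KVF}, using the hypothesis that $\cX$ is smooth over $R$ to pass through the residue-field generators of $K_0(\RES_K[\ast])$ without any polyhedral correction terms. First I would treat the case where $\cX$ is \'etale over $\AA^d_R$: by the construction of $\Theta$, if $Y$ is a constructible subset of $\cX_k$ realized as a locally closed subset of $\AA^d_k$ via the \'etale structure morphism, then $\Theta([Y]_d)$ is by definition the class of $\spe_{\cX}^{-1}(Y)$ in $K_0(\VF_K)$; the key point is that $[Y]_d \in K_0(\RES_K[d]) = K_0(\Var_k[d])$ is already in the standard form to which the formula for $\Theta$ applies, so no splitting into torus-times-polyhedron pieces is needed. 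Then I would handle the general smooth $\cX$ by working locally: since $\cX$ is smooth of pure relative dimension $d$, it can be covered by affine opens $\cU$ each admitting an \'etale morphism $\cU \to \AA^d_R$ (standard \'etale coordinates, e.g. via \cite[17.11.4]{ega4.4}), and $\spe_{\cX}^{-1}(Y)$ decomposes compatibly along $\spe_{\cU}^{-1}$ of the corresponding pieces of $Y$. Additivity of both $[\,\cdot\,]$ in $K_0(\VF_K)$ and $[\,\cdot\,]_d$ in $K_0(\RES_K[d])$, together with the fact that $\Theta$ is a ring morphism, then glue the local identities into the asserted global one $[\spe_{\cX}^{-1}(Y)] = \Theta([Y]_d)$.

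Next I would deduce the volume statement by applying the ring morphism $\Psi$ of \eqref{eq:chi}, recalling that $\Vol = \Psi \circ \Theta'$ and $\Theta' = \Theta^{-1}$. Concretely, $\Vol(\spe_{\cX}^{-1}(Y)) = \Psi(\Theta'([\spe_{\cX}^{-1}(Y)])) = \Psi([Y]_d)$, and by the construction of $\Psi$ the morphism \eqref{eq:res} sends $[Y]_d \in K_0(\Var_k[d])$ to $[Y] \in K_0(\Var_k)$ — the grading is simply forgotten, with no factor of $(\LL-1)$ since there is no $K_0(G[\ast])$ component. This gives $\Vol(\spe_{\cX}^{-1}(Y)) = [Y]$. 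The final ``in particular'' statements are the special case $Y = \cX_k$, noting that $\cX(R) = \spe_{\cX}^{-1}(\cX_k)$ and $[\cX_k]_d$ is the class of $\cX_k$ in $K_0(\Var_k[d])$.

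The main obstacle I expect is the bookkeeping in the reduction to the \'etale-over-$\AA^d_R$ case: one must check that the local \'etale charts on $\cX$ and the choice of \'etale $\AA^d_R$-model $\cX'$ in the definition of $\Theta$ can be taken compatibly, so that $\spe_{\cX}^{-1}$ restricted to each chart genuinely agrees with the $\spe_{\cX'}^{-1}$ appearing in the formula for $\Theta$. This uses that $R$ is henselian — which is precisely the hypothesis invoked in \S\ref{sec:KVF} to see that $\Theta([X]_n)$ does not depend on the chosen model — so the potential ambiguity collapses. Everything else is a formal consequence of additivity and the functoriality of $\Theta$ and $\Psi$, and should require no real computation.
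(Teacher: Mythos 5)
Your proposal is correct and follows essentially the same route as the paper: reduce by additivity to the case of a chart with an \'etale morphism to $\AA^d_R$ (with $Y$ suitably restricted), observe that $\Theta([Y]_d)=[\spe_{\cX}^{-1}(Y)]$ is then the very definition of $\Theta$ (well-defined because $R$ is henselian), and conclude $\Vol(\spe_{\cX}^{-1}(Y))=[Y]$ from the construction of $\Vol$ via the morphism \eqref{eq:res}, with no $K_0(G[\ast])$ contribution. Your explicit spelling-out of the chart compatibility and of the $\Psi\circ\Theta'$ computation is just a more detailed version of the paper's one-line argument.
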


\begin{proof}
By additivity, we may assume that $Y$ is closed in $\cX_k$ and that $\cX$ admits an \'etale morphism to $\AA^d_R$ for some $d \geq 0$. Then, by construction, the morphism $\Theta$ maps the class of $Y$ in $K_0(\Var_k[d])$ to the class of $\spe_{\cX}^{-1}(Y)$ in $K_0(\VF_K)$, and it follows from the definition of the morphism $\Vol$ that $\Vol(\spe_{\cX}^{-1}(Y))=[Y]$.
\end{proof}

 The definition of the motivic volume also makes it well adapted to tropical computations. Let $X$ be a reduced  closed subscheme of dimension $d$ of a torus $T\cong \mathbb{G}^n_{m,K}$, for some $n>0$. We denote by $M$ and $N$ the character lattice and cocharacter lattice of $T$, respectively, and by $\mathrm{Trop}(X)\subset N_{\RR}$ the tropicalization of $X$. We denote by $\TT$ the split $R$-torus with character lattice $M$. Let $\Sigma$ be a {\em $G$-admissible  tropical fan} for $X$ in $N_\RR\oplus \RR_{\geq 0}$ in the sense of \cite[12.1]{G13} (henceforth, we will simply speak of a {\em tropical fan}). It defines a toric scheme  $\PP(\Sigma)$ over $R$.     If we write $\cX$ for the schematic closure of $X$ in $\PP(\Sigma)$, then $\cX$ is proper over $R$ and the multiplication map $$m:\TT\times_R \cX\to \PP(\Sigma)$$ is faithfully flat. Recall that $X$ is called {\em sch\"{o}n} if the initial degeneration $\mathrm{in}_w(X)$ is smooth over $k$ for every $w\in N_{\RR}$; this is equivalent to saying that $m$ is smooth.

Intersecting the cones of $\Sigma$ with $N_\RR\times\{1\}$, we obtain a $G$-rational polyhedral complex in $N_\RR$ that we denote by $\Sigma_1$. The support of $\Sigma_1$ is equal to $\Trop(X)$, by \cite[12.5]{G13}.  For every cell $\gamma$ in $\Sigma_1$, we denote its relative interior by $\mathring{\gamma}$.  By \cite[12.9]{G13}, all the points $w$ in $\mathring{\gamma}$ give rise to the same initial degeneration $\mathrm{in}_w X$, which we will denote by $\mathrm{in}_{\gamma}X$. We write $X_{\gamma}$ for the semialgebraic subset
   $$X(K)\cap \trop^{-1}(\mathring{\gamma})$$
of $X$. As $\gamma$ ranges over the cells in $\Sigma_1$, the sets $X_{\gamma}$ form a partition of $X(K)$. We denote by $\cX_k(\gamma)$ the intersection of $\cX_k$ with the torus orbit of $\PP(\Sigma)_k$ corresponding to the cell $\gamma$. Then we can also write $X_\gamma$ as
$$X_\gamma=\spe^{-1}_{\cX}(\cX_k(\gamma))\cap X(K).$$

 \begin{prop}\label{prop:tropvol}
 Let $X$ be a reduced  closed subscheme of dimension $d$ of a $K$-torus $T$ and let $\Sigma$ be a tropical fan for $X$. Then, with the above notations, we have
 $$[X_{\gamma}]=\Theta([\cX_k(\gamma)]_{d-\mathrm{dim}(\gamma)}\otimes [\mathring{\gamma}]_{\mathrm{dim}(\gamma)})\quad
\in K_0(\VF_K)$$ for every cell $\gamma$ in the polyhedral complex $\Sigma_1$ such that $\mathrm{in}_{\gamma}X$ is smooth over $k$. In particular, if $X$ is sch{\"o}n, then
$$[X(K)]=\sum_{\gamma\in \Sigma_1}\Theta([\cX_k(\gamma)]_{d-\mathrm{dim}(\gamma)}\otimes [\mathring{\gamma}]_{\mathrm{dim}(\gamma)}).$$
 \end{prop}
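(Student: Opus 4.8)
The strategy is to compute $[X_\gamma]$ inside $K_0(\VF_K)$ by transporting the situation to the toric scheme $\PP(\Sigma)$ over $R$ and using the smooth-case description of the motivic volume from Proposition \ref{prop:smoothvol}, applied not to $\cX$ itself but to a suitable torus-bundle chart around the orbit indexed by $\gamma$. First I would fix a cell $\gamma$ in $\Sigma_1$ for which $\inn_\gamma X$ is smooth, and choose a point $w$ in the relative interior $\mathring\gamma$; by \cite[12.9]{G13} the initial degeneration is independent of this choice. Picking a vertex of $\gamma$ (or more precisely a ray generator structure on the corresponding cone of $\Sigma$) lets me split off the lineality: the affine span of $\gamma$ has dimension $\dim(\gamma)$, and after an integral change of coordinates on $N$ I can arrange that $\gamma$ sits in a coordinate subspace, so that locally the stratum $X_\gamma = X(K) \cap \trop^{-1}(\mathring\gamma)$ is carved out by the condition that $\dim(\gamma)$ of the tropical coordinates lie in $\mathring\gamma$ and the remaining ones are fixed by the valuations dictated by $w$.

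The key geometric input is that smoothness of the multiplication map $m:\TT\times_R\cX\to\PP(\Sigma)$ — equivalently schönness, but here we only need it at the single stratum $\gamma$, which is why the hypothesis is just that $\inn_\gamma X$ is smooth — implies that $\cX$ is smooth over $R$ along the stratum $\cX_k(\gamma)$ transversally to the torus directions. Concretely, I would use $m$ to identify an open neighbourhood (in the valuative/formal sense) of $X_\gamma$ with a product of $\cX_k(\gamma)$, which contributes the factor $[\cX_k(\gamma)]_{d-\dim(\gamma)}$, and a copy of $\trop^{-1}(\mathring\gamma)$ inside a torus of rank $\dim(\gamma)$, which by the defining property of $\Theta$ on the polyhedral side contributes $[\mathring\gamma]_{\dim(\gamma)}$. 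More precisely: one shows that $X_\gamma = \spe_\cX^{-1}(\cX_k(\gamma)) \cap X(K)$ (already recorded just before the statement), that $\cX$ is smooth of relative dimension $d - \dim(\gamma)$ over $R$ in a neighbourhood of the relevant orbit after slicing by the torus $\TT$, and then apply Proposition \ref{prop:smoothvol} to the smooth $R$-scheme obtained by intersecting $\cX$ with that torus slice — this produces $\Theta$ of the product class. The product structure $[\cX_k(\gamma)]_{d-\dim(\gamma)}\otimes[\mathring\gamma]_{\dim(\gamma)}$ then matches because $\Theta$ is a ring morphism sending a polyhedral class $[\mathring\gamma]_{\dim(\gamma)}$ to $[\trop^{-1}(\mathring\gamma)]$ and a residue class $[Y]_e$ to $[\spe^{-1}(Y)]$.

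The final assertion, that $[X(K)] = \sum_{\gamma\in\Sigma_1}\Theta(\cdots)$ when $X$ is schön, is then immediate: if $X$ is schön every $\inn_\gamma X$ is smooth, the sets $X_\gamma$ form a finite partition of $X(K)$ as $\gamma$ ranges over the cells of $\Sigma_1$ (whose support is $\Trop(X)$ by \cite[12.5]{G13}), and $K_0(\VF_K)$ is additive on finite partitions, so one just sums the per-cell identities.

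The main obstacle I anticipate is making rigorous the ``torus-bundle chart'' step: one must check that smoothness of $m$ (or of $\inn_\gamma X$) genuinely yields a product decomposition compatible with the specialization map $\spe_\cX$ and with the tropicalization map simultaneously, i.e.\ that the integral-affine coordinate change splitting off $\gamma$'s lineality lifts to an honest étale-local product structure $\cX|_{\text{near }\gamma} \simeq (\text{smooth over } R) \times (\text{torus directions})$ over which Proposition \ref{prop:smoothvol} can be invoked, and that the resulting class is independent of the choices (which follows from $R$ being henselian, exactly as in the definition of $\Theta$). Handling the grading/dimension bookkeeping — ensuring the residue factor genuinely has dimension $d-\dim(\gamma)$ and the polyhedral factor dimension $\dim(\gamma)$, so that the tensor product lands in the correct graded piece — is the other point requiring care, but it is forced by the relation $\dim X_\gamma = d$ and the transversality of the stratification.
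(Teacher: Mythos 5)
Your overall strategy is the same as the paper's: split off the subtorus $\widetilde{T}\subset T$ spanned by $\gamma$ (after translating so that $\mathring{\gamma}$ contains the origin), exhibit $X_\gamma$ semialgebraically as a product of $\widetilde{T}_\gamma=\widetilde{T}(K)\cap\trop^{-1}(\mathring{\gamma})$ with the $R$-points of a smooth model of the stratum $\cX_k(\gamma)$, apply Proposition \ref{prop:smoothvol} together with the defining property of $\Theta$ on polyhedral classes, and conclude by additivity over the partition $\{X_\gamma\}$ in the sch\"on case. The problem is that the step you defer as ``the main obstacle I anticipate'' is not a routine verification: it is the entire content of the proof, and the justification you sketch for it is not correct as stated. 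Smoothness of $\inn_\gamma X$ (or even full sch\"onness, i.e.\ smoothness of $m:\TT\times_R\cX\to\PP(\Sigma)$) does \emph{not} imply that $\cX$ is smooth over $R$ along $\cX_k(\gamma)$, and $X_\gamma$ is not obtained by ``intersecting $\cX$ with a torus slice'': the model $\cX$ is in general singular over $R$, and no single slice works because the relevant slice must vary with the point.

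What the paper actually does is the following. First, for each $g_K\in\widetilde{T}_\gamma$ one extends it to an $R$-point $g$ of $\PP(\Sigma)$ and identifies the fiber $m^{-1}(g)$ with the schematic closure $\cX^g$ of $g_K^{-1}X$ in $\TT$ (a proper monomorphism, hence closed immersion, plus flatness); this shows $\cX^g_k\cong \TT_k\times_{\TT_k/\widetilde{\TT}_k}\cX_k(\gamma)$ is a trivial $\widetilde{\TT}_k$-torsor over $\cX_k(\gamma)$, independent of $g$, and that smoothness of $\inn_\gamma X$ is equivalent to smoothness of $\cX_k(\gamma)$. Second, one chooses an \'etale chart $h:\cU\to\AA^r_R$ on the quotient torus $\TT/\widetilde{\TT}$ (not on $\cX$) with $\cU_k\cap\cX_k(\gamma)=h^{-1}(\AA^s_k)$, sets $\cY=\cU\times_{\AA^r_R}\AA^s_R$, a smooth $R$-scheme whose special fiber is $\cX_k(\gamma)\cap\cU_k$, lifts the linear projection $\AA^r_R\to\AA^s_R$ to a semialgebraic retraction using henselianity, and shows via \'etaleness of $\cX^g\times_{\TT/\widetilde{\TT}}\cU\to\widetilde{\TT}\times_R\AA^s_R$ that the induced maps $\rho_g$ are bijections; these fiberwise identifications are then assembled into a single semialgebraic bijection $\spe_\cX^{-1}(\cY_k)\cap X(K)\to\widetilde{T}_\gamma\times\cY(R)$ commuting with specialization, with inverse $(g,y)\mapsto g\ast\rho_g^{-1}(1,y)$. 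The essential point — which your plan is missing — is that the product chart exists only after translating by the \emph{varying} torus point $g$, so the identification is semialgebraic rather than (\'etale-)locally algebraic on $\cX$; supplying this construction, and checking it commutes with both $\spe_\cX$ and $\trop$, is what turns your outline into a proof.
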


\begin{proof}
 Let $\gamma$ be a cell in $\Sigma_1$.
 Translating $X$ by a point of $T(K)$, we can assume that $\mathring{\gamma}$ contains the origin of $N_\RR$. Let $V$ be the linear subspace of $N_\RR$ generated by $\gamma$ and let $\widetilde{\TT}$ be the sub-$R$-torus of $\TT$ with cocharacter lattice $V\cap N$. Its generic fiber will be denoted by $\widetilde{T}$, and we write $\widetilde{T}_\gamma$ for the inverse image of $\mathring{\gamma}$ under the tropicalization map
 $$\trop:\widetilde{T}(K)\to V.$$

\smallskip
{\em Step 1.} Let $g_K$ be any point of $\widetilde{T}_\gamma$. Since $\trop(g_K)$ lies in the support of $\Sigma_1$, the point $g_K$ extends to an $R$-point in $\PP(\Sigma)$, which we denote by $g$. We write $\cX^g$ for the schematic closure of $g_K^{-1}X$ in $\TT$. By definition, the special fiber $(\cX^g)_k$ is isomorphic to the initial degeneration $\mathrm{in}_{\gamma}X$.

 Next, we consider the multiplication morphism $$m:\TT\times_R \cX\to \PP(\Sigma).$$  We denote by $m^{-1}(g)$ the base change of $m$ to $g\in \PP(\Sigma)(R)$. We can consider $m^{-1}(g)$ as a $\TT$-scheme via the composition of the closed embedding $m^{-1}(g)\to \TT\times_R \cX$, the projection $\TT\times_R \cX\to \TT$ and the morphism $\TT\to \TT$ that sends a point of the torus to its multiplicative inverse. Then the structural morphism $m^{-1}(g)\to \TT$ is a proper monomorphism, and thus a closed embedding. It follows that  $m^{-1}(g)$ coincides with the closed subscheme $\cX^g$ of $\TT$, since $m^{-1}(g)$ is flat over $R$ and its generic fiber coincides with $g^{-1}X$.

The point $g_k\in \PP(\Sigma)(k)$ does not depend on the choice of $g$, because $\widetilde{\TT}_k$ acts trivially on the torus orbit of $\PP(\Sigma)_k$ containing $g_k$ (that is, the orbit corresponding to $\gamma$).  If we let $\TT_k$ act on $g_k$ by multiplication, then we can identify this orbit in $\PP(\Sigma)_k$ with $\TT_k/\widetilde{\TT}_k$, and the fiber of $m$ over $g_k$ with $\TT_k\times_{\TT_k/\widetilde{\TT}_k} \cX_k(\gamma)$, which is a trivial $\widetilde{\TT}_k$-torsor over $\cX_k(\gamma)$. Thus, we find that $\cX^g_k$ and $\TT_k\times_{\TT_k/\widetilde{\TT}_k} \cX_k(\gamma)$ are equal as closed subschemes of $\TT_k$. In particular, $\cX^g_k$ does not depend on $g$, and $\cX_k(\gamma)$ is smooth over $k$ if and only if $\mathrm{in}_{\gamma}X$ is smooth over $k$.

\smallskip

{\em Step 2.} Now, assume that $\mathrm{in}_{\gamma}X$ is smooth over $k$. We will cover $\cX_k(\gamma)$ by open subschemes $U$ such that
 $$[\spe^{-1}_{\cX}(U')\cap X(K)]=\Theta([U']_{d-\mathrm{dim}(\gamma)}\otimes [\mathring{\gamma}]_{\mathrm{dim}(\gamma)})\quad
\in K_0(\VF_K)$$ for every open subscheme $U'$ of $U$. Then the statement of the proposition follows by additivity and the fact that $X_\gamma=\spe^{-1}_{\cX}(\cX_k(\gamma))\cap X(K)$.

 Since $\cX_k(\gamma)$ is smooth over $k$ and $\TT/\widetilde{\TT}$ is smooth over $R$, every point of $\cX_k(\gamma)$ has an open neighbourhoud $\cU$ in $\TT/\widetilde{\TT}$ such that there exists an \'etale morphism of $R$-schemes
 $h:\cU\to \AA^r_R$ with $$\cU_k\cap \cX_k(\gamma)=h^{-1}(\AA^{s}_k),$$ where $r=n-\mathrm{dim}(\gamma)$, $s=d-\mathrm{dim}(\gamma)$. Then $\cY=\cU\times_{\AA^r_R}\AA^s_R$ is a closed subscheme of $\cU$, smooth over $R$, with special fiber $\cY_k=\cX_k(\gamma)\cap \cU_k$.

We will construct a semialgebraic bijection
 $$\spe^{-1}_{\cX}(\cY_k)\cap X(K)\to  \widetilde{T}_{\gamma}\times\cY(R)$$
that commutes with the specialization maps to $\cY_k$. This is sufficient to finish the proof, since
 $$[\spe_{\cY}^{-1}(U)]=\Theta([U]_{d-\mathrm{dim}(\gamma)})$$
in $K_0(\VF_K)$ for every open subset $U$ of $\cY_k$, by Proposition \ref{prop:smoothvol}, and
 $$[\widetilde{T}_{\gamma}]=\Theta([\mathring{\gamma}]_{\mathrm{dim}(\gamma)})$$
in $K_0(\VF_K)$ by construction.

  We choose a splitting $T\cong \widetilde{T}\times_K (T/\widetilde{T})$ of the $K$-torus $T$. It induces a projection morphism $p:T\to \widetilde{T}$, as well as a $\widetilde{\TT}$-equivariant isomorphism
  $$\TT\times_{\TT/\widetilde{\TT}}\cU\cong \widetilde{\TT}\times_R \cU$$ that restricts to an isomorphism
  $$\TT\times_{\TT/\widetilde{\TT}}\cY\cong \widetilde{\TT}\times_R \cY.$$
 By the henselian property of $R$, the linear projection  $\pi:\AA^r_R\to \AA^s_R$ lifts to a semialgebraic retraction
$\spe^{-1}_{\cU}(\cY_k)\to \cY(R)$. By base change, this retraction induces a semialgebraic map
 $$\rho:\widetilde{\TT}(R)\times \spe^{-1}_{\cU}(\cY_k)\to \widetilde{\TT}(R)\times \cY(R)$$
   By restricting $\rho$ we obtain, for every point $g_K$ of $\widetilde{T}_\gamma$, a semialgebraic map
 $$\rho_g:(\cX^g\times_{\TT/\widetilde{\TT}}\cU)(R)\to  \widetilde{\TT}(R)\times \cY(R).$$
 This is a bijection: the morphism $$\mathrm{Id}\times (\pi \circ h):\widetilde{\TT}\times_R \cU\to \widetilde{\TT}\times_R\AA^s_R$$ restricts to a morphism
 $$\cX^g\times_{\TT/\widetilde{\TT}}\cU\to \widetilde{\TT}\times_R\AA^s_R$$ which is \'etale because its restriction to the special fibers
 coincides with the \'etale morphism $$\mathrm{Id}\times h_k:\widetilde{\TT}_k\times_k \cY_k\to  \widetilde{\TT}_k\times_k \AA^s_k$$ (see Step 1).
 The henselian property of $R$ now implies that $\rho_g$ is bijective.

     Finally, we consider the map
  $$\psi: \spe^{-1}_{\cX}(\cY_k)\cap X(K)\to  \widetilde{T}_{\gamma}\times \cY(R):x\mapsto (p(x),y)$$
  where $y$ is the image of $\rho(p(x)^{-1}x)$ under the projection to $\cY$. The map $\psi$ is semialgebraic and commutes with specialization to $\cY_k$. Moreover, $\psi$ is bijective: its inverse is given by
  $$(g,y)\mapsto g\ast\rho^{-1}_g(1,y).$$ This concludes the proof.
\end{proof}

\begin{cor}\label{cor:tropvol}
Let $X$ be a reduced  closed subscheme of $T$ and let
 $\tau$ be a definable set in $N\otimes_{\ZZ}G\cong G^n$.
Assume that
 $\mathrm{in}_{w}(X)$ is smooth over $k$ for point $w$ in $\tau$, and that the class $[\mathrm{in}_w X]$ in $K_0(\Var_k)$ does not depend in $w$; we denote it by
 $[\mathrm{in}_\tau X]$.
 Then we have $$\Vol(X(K)\cap \trop^{-1}(\tau))=\chi'(\tau) [\mathrm{in}_{\tau}X]$$
 in $K_0(\Var_k)$.

 In particular, if $\Sigma$ is a tropical fan for $X$ and $\gamma$ is a cell of $\Sigma_1$ such that $\mathrm{in}_\gamma X$ is smooth over $k$, then $\Vol(X_\gamma)=(-1)^{\mathrm{dim}(\gamma)}[\mathrm{in_\gamma}X]$
 if $\gamma$ is bounded, and $\Vol(X_\gamma)=0$ if $\gamma$ is unbounded. If $X$ is sch{\"o}n, then
 $$\Vol(X)=\sum_{\gamma}(-1)^{\mathrm{dim}(\gamma)}[\mathrm{in}_{\gamma}X]\quad
\in K_0(\Var_k)$$
 where the sum is taken over the bounded cells $\gamma$ of $\Sigma_1$.
\end{cor}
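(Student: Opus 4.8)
The plan is to deduce the corollary from Proposition~\ref{prop:tropvol} by applying the ring morphism $\Vol=\Psi\circ\Theta'$ and then evaluating the combinatorial contributions via Proposition~\ref{prop:chi-poly}. Fix a tropical fan $\Sigma$ for $X$. Since $\trop(X(K))\subseteq\Trop(X)=|\Sigma_1|$, the semialgebraic set $X(K)\cap\trop^{-1}(\tau)$ is the disjoint union of the sets $X(K)\cap\trop^{-1}(\tau\cap\mathring\gamma)$, as $\gamma$ ranges over the cells of $\Sigma_1$; by additivity of $\Vol$ it therefore suffices to compute each of these. Fix a cell $\gamma$ with $\tau\cap\mathring\gamma\neq\emptyset$ and choose $w\in\tau\cap\mathring\gamma$; by hypothesis $\mathrm{in}_\gamma X=\mathrm{in}_w X$ is smooth over $k$, and $[\mathrm{in}_\gamma X]=[\mathrm{in}_\tau X]$ in $K_0(\Var_k)$.

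For such a $\gamma$ I would reuse, in a slightly refined form, the construction in the proof of Proposition~\ref{prop:tropvol}. After translating $X$ by a point of $T(K)$ so that $\mathring\gamma$ contains the origin --- which changes neither $\Vol$ nor the relevant isomorphism classes of initial degenerations --- that proof produces, for each point of $\cX_k(\gamma)$, an affine chart $\cY_k\subseteq\cX_k(\gamma)$ and a semialgebraic bijection $\spe_{\cX}^{-1}(\cY_k)\cap X(K)\to(\widetilde T(K)\cap\trop^{-1}(\mathring\gamma))\times\cY(R)$ of the form $x\mapsto(p(x),y)$ that commutes with specialization to $\cY_k$, where $p\colon T\to\widetilde T$ is the coordinate projection attached to the chosen splitting and $\widetilde T$ has cocharacter space $V=\mathrm{span}(\gamma)$. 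Since $\trop(x)\in\mathring\gamma\subseteq V$ for every $x$ in the source, $\trop(p(x))=\trop(x)$, so this bijection restricts, for any definable $\sigma\subseteq\mathring\gamma$, to a semialgebraic bijection between $X(K)\cap\trop^{-1}(\sigma)$ (intersected with the chart) and $(\widetilde T(K)\cap\trop^{-1}(\sigma))\times\cY(R)$. Combining Proposition~\ref{prop:smoothvol} for the $\cY(R)$-factor with the identity $[\widetilde T(K)\cap\trop^{-1}(\sigma)]=\Theta([\sigma]_{\dim\gamma})$ and covering $\cX_k(\gamma)$ by such charts yields
\[
[X(K)\cap\trop^{-1}(\sigma)]=\Theta\bigl([\cX_k(\gamma)]_{d-\dim\gamma}\otimes[\sigma]_{\dim\gamma}\bigr)\qquad\text{in }K_0(\VF_K).
\]

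Applying $\Vol=\Psi\circ\Theta'$ and recalling that $\Psi$ sends $[Y]_m\in K_0(\RES_K[m])$ to $[Y]$ and $[S]_m\in K_0(G[m])$ to $\chi'(S)(\LL-1)^m$, we get $\Vol(X(K)\cap\trop^{-1}(\sigma))=\chi'(\sigma)(\LL-1)^{\dim\gamma}[\cX_k(\gamma)]$. Step~1 of the proof of Proposition~\ref{prop:tropvol} exhibits $\mathrm{in}_\gamma X$ as a trivial torsor over $\cX_k(\gamma)$ under the $\dim\gamma$-dimensional $k$-torus $\widetilde\TT_k$, so $[\mathrm{in}_\gamma X]=(\LL-1)^{\dim\gamma}[\cX_k(\gamma)]$, and hence $\Vol(X(K)\cap\trop^{-1}(\sigma))=\chi'(\sigma)[\mathrm{in}_\gamma X]$. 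Taking $\sigma=\tau\cap\mathring\gamma$, summing over all cells of $\Sigma_1$ (those with $\tau\cap\mathring\gamma=\emptyset$ contributing $0$), and using the additivity of $\chi'$ together with $[\mathrm{in}_\gamma X]=[\mathrm{in}_\tau X]$ whenever $\tau\cap\mathring\gamma\neq\emptyset$, we obtain $\Vol(X(K)\cap\trop^{-1}(\tau))=\chi'(\tau\cap\Trop(X))[\mathrm{in}_\tau X]$. If $[\mathrm{in}_\tau X]\neq 0$ then $\mathrm{in}_w X\neq\emptyset$ for all $w\in\tau$ (a point of $\tau$ outside $\Trop(X)$ would have empty initial degeneration, forcing $[\mathrm{in}_\tau X]=0$), so $\tau\subseteq\Trop(X)$ and $\chi'(\tau\cap\Trop(X))=\chi'(\tau)$; if $[\mathrm{in}_\tau X]=0$ the asserted identity reads $0=0$. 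This proves the main formula. For the remaining assertions, apply it with $\tau=\mathring\gamma$ a single cell of a tropical fan: Proposition~\ref{prop:chi-poly} gives $\chi'(\mathring\gamma)=(-1)^{\dim\gamma}$ if $\gamma$ is bounded and $\chi'(\mathring\gamma)=0$ if $\gamma$ is unbounded, since a cell of a (strongly convex) tropical fan is never a positive-dimensional affine subspace; and when $X$ is sch\"on every initial degeneration is smooth, $X(K)=\bigsqcup_{\gamma\in\Sigma_1}X_\gamma$, so the stated formula for $\Vol(X)$ follows by additivity, only the bounded cells contributing.

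The step I expect to be the main obstacle is the passage through the proof of Proposition~\ref{prop:tropvol}: one must extract from it both the ``relative over $\trop$'' version of the bijection --- so that $\mathring\gamma$ may be replaced by an arbitrary definable $\sigma\subseteq\mathring\gamma$ --- and the torsor identity $[\mathrm{in}_\gamma X]=(\LL-1)^{\dim\gamma}[\cX_k(\gamma)]$. Once these are in place, what remains is formal manipulation in the Grothendieck rings, the computation of $\chi'$ on relative interiors of polyhedra, and the minor case distinction needed to replace $\chi'(\tau\cap\Trop(X))$ by $\chi'(\tau)$.
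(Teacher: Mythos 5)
Your argument is correct, and it reaches the same formula through a slightly different reduction than the paper. The paper's proof is shorter: it refines $\Sigma$ so that $\tau$ becomes a union of relative interiors of cells of $\Sigma_1$, reduces by additivity to the case $\tau=\mathring{\gamma}$, and then uses Proposition~\ref{prop:tropvol} as a black box together with the isomorphism $\inn_\gamma X\cong \cX_k(\gamma)\times_k\widetilde{\TT}_k$ and Proposition~\ref{prop:chi-poly}. You instead keep $\Sigma$ fixed, decompose $\tau$ along the relative interiors of its cells, and prove a relative strengthening of Proposition~\ref{prop:tropvol} (with $\mathring{\gamma}$ replaced by an arbitrary definable $\sigma\subseteq\mathring{\gamma}$) by reopening its proof and checking that the bijection $\psi$ is compatible with tropicalization, namely $\trop(p(x))=\trop(x)$ whenever $\trop(x)\in\mathring{\gamma}\subseteq V$; that check is valid, and the remaining steps (applying $\Vol=\Psi\circ\Theta'$, the torsor identity $[\inn_\gamma X]=(\LL-1)^{\dim\gamma}[\cX_k(\gamma)]$, where your dimension count $\dim\gamma$ for $\widetilde{\TT}_k$ is the correct one, the paper's proof of the corollary misstates it as $\dim X-\dim\gamma$, and the evaluation of $\chi'$ via Proposition~\ref{prop:chi-poly}) coincide with the paper's. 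What your route buys: you avoid the implicit use of the fact that a refinement of a tropical fan is again a tropical fan, and you treat explicitly the case where $\tau$ is not contained in $\Trop(X)$, where the paper's refinement step does not literally apply but both sides vanish because the common class $[\inn_\tau X]$ is then zero; the cost is that Proposition~\ref{prop:tropvol} can no longer be quoted as stated and its proof must be revisited. The essential ingredients are the same, so the difference is one of bookkeeping and added care at the boundary cases rather than of substance.
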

\begin{proof}
 We can refine the fan $\Sigma$ in such a way that $\tau$ becomes a union of relative interiors of cells in $\Sigma_1$.
 Thus, by additivity, we may assume that $\tau$ is the relative interior of a cell $\gamma$ of $\Sigma_1$.
 As we have seen in the proof of Proposition \ref{prop:tropvol}, the initial degeneration $\mathrm{in}_{\gamma}X$ is isomorphic to
$\cX_k(\gamma)\times_k \widetilde{\TT}_k$, where $\widetilde{\TT}_k$ is a $k$-torus of dimension equal to $\mathrm{dim}(X)-\mathrm{dim}(\gamma)$.
Now the result follows from Propositions \ref{prop:chi-poly} and \ref{prop:tropvol}, and the definition of the motivic volume (note that $\gamma$ is not an unbounded affine subspace of $N_{\RR}$ because the cones in the tropical fan $\Sigma$ are strictly convex by definition).
\end{proof}

By an additivity argument, we can also obtain an expression for the motivic volume of the schematic closure $\overline{X}$ of $X$ in $\PP(\Sigma)_K$.
 For every cell $\gamma$ of $\Sigma_1$, we set $\overline{X}_\gamma=\spe_{\cX}^{-1}(\cX_k(\gamma))$.
 Equivalently, $\overline{X}_\gamma$ is the closure of $X_\gamma$ in $\overline{X}(K)$ with respect to the valuation topology. These sets form a semialgebraic partition of $\overline{X}(K)$. We have $X_\gamma=X(K)\cap \overline{X}_\gamma$, and $X_\gamma=\overline{X}_\gamma$ if and only if $\gamma$ is bounded.

\begin{prop}\label{prop:tropvol2}
Let $X$ be a reduced  closed subscheme of $T$, let $\Sigma$ be a tropical fan for $X$ and denote by $\overline{X}$ the schematic closure of $X$ in $\PP(\Sigma)_K$.  Then for every cell $\gamma$ of $\Sigma_1$ such that $\mathrm{in}_{\gamma}(X)$ is smooth over $k$, we have
$$\Vol(\overline{X}_\gamma)=(1-\LL)^{\mathrm{dim}(\gamma)-\mathrm{dim}(\rec(\gamma))}[\cX_k(\gamma)]$$
 where $\rec(\gamma)$ denotes the recession cone of $\gamma$. In particular, if $X$ is sch{\"o}n, then
 $$\Vol(\overline{X})=\sum_{\gamma\in \Sigma_1}(1-\LL)^{\mathrm{dim}(\gamma)-\mathrm{dim}(\rec(\gamma))}[\cX_k(\gamma)]\quad
\in K_0(\Var_k).$$
\end{prop}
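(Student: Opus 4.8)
The plan is to reduce the computation of $\Vol(\overline{X}_\gamma)$ to Corollary \ref{cor:tropvol}, applied not to $X$ but to the boundary strata of $\overline{X}$ in $\PP(\Sigma)$. The feature that makes this work cleanly is that the relevant piece of the special fibre of each stratum is always $\cX_k(\gamma)$, so the recession geometry of $\gamma$ enters only through a combinatorial Euler characteristic computed via Proposition \ref{prop:chi-poly}.

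First I would set up the stratification. Let $\sigma\in\Sigma$ be the cone with slice $\gamma$, so that $\rec(\gamma)=\sigma\cap(N_\RR\times\{0\})$ is a face of $\sigma$; since the cones of $\Sigma$ are strictly convex, $\rec(\gamma)$ and all of its faces are cones of the recession fan $\overline{\Sigma}$ of $\Sigma$, and hence index torus orbits $O_\tau$ of $\PP(\Sigma)_K$. For a face $\tau\preceq\rec(\gamma)$, put $Y_\tau=\overline{X}\cap O_\tau$, a reduced closed subscheme of the torus $O_\tau$ with cocharacter lattice $N/(N\cap\spn\tau)$; let $\overline{\gamma}_\tau$ be the image of $\gamma$ in $N_\RR/\spn\tau$, and write $(Y_\tau)_{\overline{\gamma}_\tau}$ for the semialgebraic set of points of $Y_\tau$ whose tropicalization lies in $\relint(\overline{\gamma}_\tau)$. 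The first step is the semialgebraic partition
\[
\overline{X}_\gamma \;=\; \bigsqcup_{\tau\preceq\rec(\gamma)}(Y_\tau)_{\overline{\gamma}_\tau}.
\]
Indeed, a $K$-point $p$ of $\overline{X}$ lies in a unique orbit $O_\tau$; its reduction $\spe_\cX(p)$ lies in the special-fibre stratum indexed by $\gamma$ exactly when that stratum is contained in the closure of $O_\tau$ in $\PP(\Sigma)$, which happens if and only if $\tau\preceq\rec(\gamma)$; and in that case the cell of the tropicalization of $Y_\tau$ through which $p$ passes must be $\overline{\gamma}_\tau$, by compatibility of $\spe_\cX$ with the torus stratifications. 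That $\overline{X}_\gamma$ and each $(Y_\tau)_{\overline{\gamma}_\tau}$ are semialgebraic is part of the setup of \S\ref{sec:semialgebraicsets}.

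Next I would apply Proposition \ref{prop:tropvol} to each $Y_\tau$. The closure of $O_\tau$ in $\PP(\Sigma)$ is the toric $R$-scheme of the star fan $\Star_\Sigma(\tau)$, which is a tropical fan for $Y_\tau$, and the schematic closure of $Y_\tau$ in it is $\overline{X}\cap\overline{O_\tau}$. Tracing the orbit–closure relations in the special fibre, the stratum of this closure attached to the cell $\overline{\gamma}_\tau$ equals $\cX_k\cap O_\gamma=\cX_k(\gamma)$, the same scheme for every $\tau$. Moreover the initial degeneration of $Y_\tau$ along $\overline{\gamma}_\tau$ is obtained from $\inn_\gamma X$ — which, by the proof of Proposition \ref{prop:tropvol}, is isomorphic to $\cX_k(\gamma)\times\widetilde{\TT}_k$ — by passing to a partial compactification in the directions of $\tau\subseteq\spn(\gamma)$ and restricting to a boundary orbit, an operation that only alters the torus factor; hence it is smooth over $k$ because $\inn_\gamma X$ is. Applying Proposition \ref{prop:tropvol} and then $\Vol$ therefore gives
\[
\Vol\big((Y_\tau)_{\overline{\gamma}_\tau}\big)\;=\;\chi'(\relint(\overline{\gamma}_\tau))\,(\LL-1)^{\dim\gamma-\dim\tau}\,[\cX_k(\gamma)].
\]

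Finally I would evaluate the Euler characteristics. The recession cone of $\overline{\gamma}_\tau$ is the image of $\rec(\gamma)$ in $N_\RR/\spn\tau$; since $\tau$ is a face of the strictly convex cone $\rec(\gamma)$, this image is again strictly convex, and it is the zero cone precisely when $\tau=\rec(\gamma)$. Hence, by Proposition \ref{prop:chi-poly}, $\chi'(\relint(\overline{\gamma}_\tau))=0$ unless $\tau=\rec(\gamma)$, in which case $\overline{\gamma}_\tau$ is a bounded polyhedron of dimension $\dim\gamma-\dim\rec(\gamma)$ and $\chi'(\relint(\overline{\gamma}_\tau))=(-1)^{\dim\gamma-\dim\rec(\gamma)}$. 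Summing over $\tau$ and using additivity of $\Vol$ on the partition above yields $\Vol(\overline{X}_\gamma)=(1-\LL)^{\dim\gamma-\dim\rec(\gamma)}[\cX_k(\gamma)]$, and the formula for schön $X$ follows from the semialgebraic partition $\overline{X}(K)=\bigsqcup_{\gamma\in\Sigma_1}\overline{X}_\gamma$ recalled before the statement, since every $\inn_\gamma X$ is smooth in that case. The step I expect to be the main obstacle is the bookkeeping in the third paragraph: identifying the induced toric $R$-scheme structure on the closure of each boundary stratum, checking that its special-fibre piece attached to $\overline{\gamma}_\tau$ really is $\cX_k(\gamma)$ on the nose, and verifying that smoothness of the single initial degeneration $\inn_\gamma X$ propagates to all the $Y_\tau$; these facts are standard in the theory of tropical compactifications (as in the references for tropical fans used above), but spelling them out carefully is where the work lies.
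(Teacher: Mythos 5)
Your proposal is correct and follows essentially the same route as the paper: you partition $\overline{X}_\gamma$ over the faces $\tau\preceq\rec(\gamma)$ (i.e.\ over the torus orbits $T(\sigma)$ of $\PP(\Sigma)_K$), apply Proposition \ref{prop:tropvol} to each boundary stratum with its projected (star) fan, identify the relevant special-fibre stratum with $\cX_k(\gamma)$, and observe via Proposition \ref{prop:chi-poly} that only $\tau=\rec(\gamma)$ contributes. The verifications you flag as the main obstacle (that the projected fan is a tropical fan for $Y_\tau$, that its stratum is $\cX_k(\gamma)$, and that smoothness of $\inn_\gamma X$ propagates) are exactly what the paper establishes, using the faithful flatness of the multiplication morphism and Step 1 of the proof of Proposition \ref{prop:tropvol}.
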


\begin{proof}
We denote by $\rec(\Sigma)$ the recession fan of $\Sigma$ in $N_\RR$. It consists of the  recession cones $\rec(\gamma)$ of the cells $\gamma$ of $\Sigma_1$. The toric variety associated with $\rec(\Sigma)$ is canonically isomorphic with $\PP(\Sigma)_K$. Let $\sigma$ be a cone of $\rec(\Sigma)$. We denote by $T(\sigma)$ the torus orbit of $\PP(\Sigma)_K$ corresponding to $\sigma$.  We can  view $T(\sigma)$ as a quotient $K$-torus of $T$ of dimension $n-\mathrm{dim}(\sigma)$ with cocharacter lattice  $$N(\sigma)=N/(N\cap V_{\sigma}),$$ where $V_{\sigma}$ is the subspace of $N_\RR$ spanned by $\sigma$.  We denote by $p$ the projection morphism $$p:N_\RR\to N(\sigma)_\RR,$$ and we write $\TT(\sigma)$ for the split $R$-torus with the same cocharacter lattice as $T(\sigma)$.

Let $S(\sigma)$ be the set of cones in $\Sigma$ whose recession cones contain $\sigma$. By projecting the cones in $S(\sigma)$ to $N(\sigma)_{\RR}\oplus \RR$, we get a $G$-admissible fan that we denote by $\Sigma(\sigma)$.  The associated toric $R$-scheme $\PP(\Sigma(\sigma))$ is canonically isomorphic to the schematic closure of $T(\sigma)$ in $\PP(\Sigma)$. We denote by $X(\sigma)$ the intersection of $\overline{X}$ with $T(\sigma)$, with its reduced induced structure, and by $\cX(\sigma)$ the schematic closure of $X(\sigma)$ in $\PP(\Sigma(\sigma))$. Then the multiplication morphism
  $$m':\cX(\sigma)\times_R \TT\to \PP(\Sigma(\sigma))$$
is the base change of the multiplication morphism
  $$m:\cX\times_R \TT\to \PP(\Sigma).$$
Indeed, this is obviously true over the dense torus orbit of $\PP(\Sigma(\sigma))_K$, but then it holds over the whole of $\PP(\Sigma(\sigma))$ by flatness of $m$.  Since the action of $\TT$ on $\PP(\Sigma(\sigma))$ factors through the quotient torus $\TT(\sigma)$, the morphism $m'$ factors through the multiplication morphism
     $$m_{\sigma}:\cX(\sigma)\times_R \TT(\sigma)\to \PP(\Sigma(\sigma)).$$
It follows that $m_{\sigma}$ is still faithfully flat, so that $\Sigma(\sigma)$ is a tropical fan for $X(\sigma)$.

  Our description of $m_{\sigma}$ also implies that, for every cell $\gamma$ of $\Sigma_1$ whose recession cone contains $\sigma$, the stratum $\cX_k(\gamma)$ of the special fiber of $\cX$ coincides with the stratum $\cX(\sigma)_k(p(\gamma))$ of the special fiber of $\cX(\sigma)$. It then follows from Step 1 in the proof of Proposition \ref{prop:tropvol} that $\mathrm{in}_{\gamma}(X)$ is isomorphic to  $\mathrm{in}_{p(\gamma)}(X(\sigma))\times_k \GG^{\mathrm{dim}(\sigma)}_{m,k}$.
  In particular, $\mathrm{in}_{\gamma}(X)$ is smooth if and only if $\mathrm{in}_{p(\gamma)}(X(\sigma))$ is smooth.
    Moreover, $p(\gamma)$ is bounded if and only if  $\mathrm{rec}(\gamma)=\sigma$; in that case, the dimension of $p(\gamma)$ is equal to $\mathrm{dim}(\gamma)-\mathrm{dim}(\rec(\gamma))$. Since we can write $$[\overline{X}_\gamma]=\sum_{\stackrel{\sigma\in \rec(\Sigma)}{\sigma\subset \rec(\gamma)}}[X(\sigma)_{p(\gamma)}]$$ in $K_0(\VF_K)$, the formula in Corollary \ref{cor:tropvol} now yields $$\Vol(\overline{X}_\gamma)=(1-\LL)^{\mathrm{dim}(\gamma)-\mathrm{dim}(\rec(\gamma))}[\cX_k(\gamma)]\quad \in K_0(\Var_k)$$ whenever $\mathrm{in}_{\gamma}(X)$ is smooth over $k$.
  The expression for $\Vol(\overline{X})$ then follows by additivity.
\end{proof}
\begin{rem}
Note that, in the statement of Proposition \ref{prop:tropvol2}, the recession cone of $\gamma$ has dimension zero if $\gamma$ is bounded, so that we get the same formula for the motivic volume of $\overline{X}_{\gamma}=X_\gamma$ as in Corollary \ref{cor:tropvol} for bounded cells $\gamma$.
\end{rem}

We also record the following variant of Proposition \ref{prop:tropvol2} that will be used in the calculations in Section \ref{sec:genus1}. Let us emphasize that, in the statement of Proposition \ref{prop:tropvol3}, we do {\em not} assume that $\Sigma$ is a tropical fan for $X$.

\begin{prop}\label{prop:tropvol3}
Let $X$ be an integral closed subscheme of $T$, let $\Sigma$ be a $G$-admissible fan in $N_{\RR}\oplus \RR_{\geq 0}$, and denote by $\overline{X}$ the schematic closure of $X$ in $\PP(\Sigma)_K$.
 Let $\tau$ be a $G$-rational polyhedron in $N_{\RR}$ whose recession cone $\mathrm{rec}(\tau)$ belongs to the recession fan $\mathrm{rec}(\Sigma)$.
  Assume that $\mathrm{in}_{w}X$ is smooth over $k$ for every $G$-rational point $w$ in $\mathring{\tau}$, and that its class in $K_0(\Var_k)$ does not depend on $w$; we denote it by $[\mathrm{in}_\tau X]$.

  We denote by $\overline{X}_\tau$ the closure of $X(K)\cap \trop^{-1}(\mathring{\tau})$ in $\overline{X}(K)$ with respect to the valuation topology.
    Then  we have
$$\Vol(\overline{X}_\tau)=(-1)^{\mathrm{dim}(\tau)-\mathrm{dim}(\rec(\tau))}\frac{[\mathrm{in}_{\tau}X]}{(\LL-1)^{\mathrm{dim}(\rec(\tau))}}$$
in $K_0(\Var_k)[(\LL-1)^{-1}]$.
\end{prop}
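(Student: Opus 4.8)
The plan is to follow the strategy of the proof of Proposition~\ref{prop:tropvol2}: pass to the quotient torus attached to $\sigma:=\rec(\tau)$, reduce to a computation over a \emph{bounded} polyhedron, and then apply Corollary~\ref{cor:tropvol}. Two preliminary reductions come first. If $\mathring{\tau}$ is disjoint from $\Trop(X)$, then $X(K)\cap\trop^{-1}(\mathring{\tau})=\emptyset$ and $\inn_wX=\emptyset$ for every $G$-rational $w\in\mathring{\tau}$, so both sides of the asserted identity vanish; hence we may assume $\tau\subseteq\Trop(X)$. Next, since every point of $\trop^{-1}(\mathring{\tau})$ has bounded coordinate valuations in all directions transverse to $\sigma$, a standard argument shows that $\overline{X}_\tau$ is contained in the union of those torus orbits of $\PP(\Sigma)_K$ indexed by the faces $\rho$ of $\sigma$; this union is the affine toric variety $Y_\sigma$ attached to the fan of faces of $\sigma$, which is an open subscheme of $\PP(\Sigma)_K$ depending only on $T$ and $\sigma$, and $\overline{X}_\tau$ is the closure of $X(K)\cap\trop^{-1}(\mathring{\tau})$ in $(\overline{X}\cap Y_\sigma)(K)$. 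Thus $\overline{X}_\tau$ and $\Vol(\overline{X}_\tau)$ depend only on $X$, $\tau$ and $\sigma$, and we may replace $\Sigma$ by any $G$-admissible fan having $\sigma$ as a cone of its recession fan; in particular, possibly after refining a tropical fan for $X$, we may assume from now on that $\Sigma$ is a tropical fan for $X$.

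For each face $\rho$ of $\sigma$, write $T(\rho)\subset\PP(\Sigma)_K$ for the corresponding torus orbit, a quotient torus of $T$ with cocharacter lattice $N(\rho)=N/(N\cap V_\rho)$, $p_\rho\colon N_\RR\to N(\rho)_\RR$ for the projection, and $X(\rho)=(\overline{X}\cap T(\rho))_{\red}$. Since $\tau+\sigma=\tau$ we have $\mathring{\tau}+\sigma=\mathring{\tau}$, so $\mathring{\tau}\subset\Trop(X)$ is invariant under translation by $\sigma$, and $p_\rho(\mathring{\tau})=\mathring{p_\rho(\tau)}$, where $p_\rho(\tau)$ is a $G$-rational polyhedron with recession cone $p_\rho(\sigma)=\sigma/\rho$. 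Arguing exactly as in the proof of Proposition~\ref{prop:tropvol2} — using that $\Sigma(\rho)$ is then a tropical fan for $X(\rho)$, that the strata $\cX_k(\gamma)$ attached to cells $\gamma$ of $\Sigma_1$ with $\rec(\gamma)\supseteq\rho$ descend to strata of $\cX(\rho)_k$, and that $\inn_\gamma X\cong\inn_{p_\rho(\gamma)}(X(\rho))\times_k\GG_{m,k}^{\dim(\rho)}$ — one obtains a finite semialgebraic partition of $\overline{X}_\tau$ indexed by the faces $\rho\preceq\sigma$, whose $\rho$-th part is, up to a set of strictly smaller tropical dimension that does not affect the motivic volume, $X(\rho)(K)\cap\trop^{-1}(\mathring{p_\rho(\tau)})$, and on which the hypotheses of Corollary~\ref{cor:tropvol} hold with $[\inn_{p_\rho(\tau)}X(\rho)]=[\inn_\tau X]/(\LL-1)^{\dim(\rho)}$.

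Applying Corollary~\ref{cor:tropvol}, the $\rho$-th part contributes $\chi'\bigl(\mathring{p_\rho(\tau)}\bigr)\,[\inn_\tau X]/(\LL-1)^{\dim(\rho)}$. For $\rho\subsetneq\sigma$ the polyhedron $p_\rho(\tau)$ is unbounded and its recession cone $\sigma/\rho$ is a strictly convex cone of positive dimension, so $\chi'(\mathring{p_\rho(\tau)})=0$ by Proposition~\ref{prop:chi-poly}; hence only $\rho=\sigma$ survives. There $p_\sigma(\tau)$ is bounded of dimension $\dim(\tau)-\dim(\sigma)$, so $\chi'(\mathring{p_\sigma(\tau)})=(-1)^{\dim(\tau)-\dim(\sigma)}$ by Proposition~\ref{prop:chi-poly}, and we conclude
\[
\Vol(\overline{X}_\tau)=(-1)^{\dim(\tau)-\dim(\sigma)}\,\frac{[\inn_\tau X]}{(\LL-1)^{\dim(\sigma)}}
\]
in $K_0(\Var_k)[(\LL-1)^{-1}]$, which is the claim since $\sigma=\rec(\tau)$. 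I expect the main obstacle to lie in the second paragraph: the stratification of $\overline{X}_\tau$ and the identity $[\inn_\tau X]=(\LL-1)^{\dim(\sigma)}[\inn_{p_\sigma(\tau)}X(\sigma)]$ are established in the proof of Proposition~\ref{prop:tropvol2} only when $\tau$ is a cell of the ambient tropical fan, whereas here $\tau$ is an arbitrary $G$-rational polyhedron; extending them requires exploiting the inputs $\rec(\tau)\in\rec(\Sigma)$ and $\mathring{\tau}+\sigma=\mathring{\tau}$, a careful analysis of which cells of $\Sigma_1$ are ``visible at infinity'' from $\mathring{\tau}$, and the verification that the corresponding initial degenerations are smooth and of constant class along the relevant definable sets. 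In the special case $\rec(\tau)=\{0\}$ the whole argument collapses to a direct application of Corollary~\ref{cor:tropvol} to the definable set $\mathring{\tau}$.
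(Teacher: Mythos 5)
Your overall skeleton coincides with the paper's: reduce to an orbit-by-orbit analysis over the faces $\rho$ of $\sigma=\rec(\tau)$, apply Corollary~\ref{cor:tropvol} on each orbit, and observe via Proposition~\ref{prop:chi-poly} that only $\rho=\sigma$ contributes because the projected polyhedra for proper faces are unbounded with strictly convex positive-dimensional recession cone. The final bookkeeping is correct. The genuine gap is the reduction in your first paragraph: ``possibly after refining a tropical fan for $X$, we may assume that $\Sigma$ is a tropical fan for $X$'' with $\sigma\in\rec(\Sigma)$. Refining a tropical fan does preserve tropicality, but it subdivides the recession fan, so a refinement will in general destroy $\sigma$ as a single cone; and the cone $\sigma$ comes from the \emph{arbitrary} $G$-admissible fan $\Sigma$ of the statement, so there is no reason for any tropical fan for $X$ to have $\sigma$ itself (rather than a subdivision of it) in its recession fan. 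The existence of such a fan is precisely what your argument needs and is not established -- it is the step you yourself flag as ``the main obstacle'' without resolving it. The paper's proof avoids this by keeping the given $\Sigma$, choosing an auxiliary tropical fan $\Sigma'$ with $\tau$ a union of cells of $\Sigma'_1$ and $\rec(\Sigma')$ refining a fan that contains $\sigma$, and then comparing the stratum $X(\sigma')\subset\PP(\Sigma')_K$ (for a cone $\sigma'\subseteq\sigma$ of $\rec(\Sigma')$ of the same dimension as $\sigma$, so that $T(\sigma')\cong T(\sigma)$) with the stratum $X(\sigma)\subset\PP(\Sigma)_K$, using \cite[4.4]{LQ} to identify $X(\sigma')$ with the inverse image of $X(\sigma)$; this two-compactification comparison is what yields $\inn_w X\cong \inn_{w'}X(\sigma)\times_k\GG_{m,k}^{\dim(\sigma)}$ for a suitable $w\in\mathring{\tau}$ above each $G$-rational $w'$, and it is the content your reduction sweeps away.

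A second, smaller defect: the hedge ``up to a set of strictly smaller tropical dimension that does not affect the motivic volume'' is not permissible. $\Vol$ is additive on semialgebraic partitions, so discarding \emph{any} nonempty semialgebraic subset changes it; there is no dimension-based insensitivity. What is needed (and what the paper proves) is the exact identity $\overline{X}_\tau\cap T(\rho)(K)=X(\rho)(K)\cap\trop^{-1}(\mathring{\tau'})$, where $\tau'$ is the polyhedron of points $w'$ with $w+\rho\subset\tau$ for some $w\in p_\rho^{-1}(w')$ (its relative interior is $p_\rho(\mathring{\tau})$), together with smoothness and constancy of the class of $\inn_{w'}X(\rho)$ at \emph{every} $G$-rational $w'$ there -- the latter is required to invoke Corollary~\ref{cor:tropvol} even on the proper faces, whose contribution only afterwards turns out to vanish.
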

\begin{proof}
The argument is similar to the proof of Proposition \ref{prop:tropvol2}; we adopt the notations of that proof. We may assume that every $G$-rational point $w$ of $\mathring{\tau}$ is contained in $\trop(X(K))$, since otherwise, all the initial degenerations $\mathrm{in}_{w}X$ are empty, and there is nothing to prove.
 Let $\sigma$ be a cone in $\mathrm{rec}(\Sigma)$. Let $\tau'$ be the set of points $w'$ in $N(\sigma)_{\RR}$ such that $w+\sigma\subset \tau$ for some $w$ in $p^{-1}(w')$.
 Then $\tau'$ is a $G$-rational polyhedron in $N(\sigma)_{\RR}$, and its relative interior is equal to $p(\mathring{\tau})$. Since we are assuming that the recession cone of $\tau$ belongs to the fan $\rec(\Sigma)$, the polyhedron $\tau'$ is empty unless $\sigma$ is a face of $\mathrm{rec}(\tau)$;
 it is bounded of dimension $\mathrm{dim}(\tau)-\mathrm{dim}(\mathrm{rec}(\tau))$  if $\sigma$ coincides with $\mathrm{rec}(\tau)$, and it is unbounded if $\sigma$ is a strict face of $\mathrm{rec}(\tau)$.
 Moreover,  we have $$\overline{X}_\tau\cap T(\sigma)(K)=X(\sigma)(K)\cap \trop^{-1}(\mathring{\tau}').$$
  Thus it suffices to show that, if $\sigma$ is a face of $\mathrm{rec}(\tau)$ and $w'$ is a $G$-rational point in $\mathring{\tau}'$, then there exists a $G$-rational point $w$ in $\mathring{\tau}\cap p^{-1}(w')$ such that the initial degeneration $\mathrm{in}_w X$ is isomorphic to
 $\mathrm{in}_{p(w)}X(\sigma)\times_k \mathbb{G}_{m,k}^{\mathrm{dim}(\sigma)}$. Smoothness of $\mathrm{in}_w X$ then implies smoothness of $\mathrm{in}_{p(w)}X(\sigma)$, and the result follows from Corollary \ref{cor:tropvol} and the additivity of the motivic volume.

 So assume that $\sigma$ is a face of $\mathrm{rec}(\tau)$, and let $w'$ be a $G$-rational point in $\mathring{\tau}'$.
 Let $\Sigma'$ be a tropical fan in $N_{\RR}\oplus \RR_{\geq 0}$ for $X$ such that $\tau$ is a union of cells in $\Sigma'_1$.
 Then we can find:
 \begin{enumerate}
\item a cell $\gamma$ in $\Sigma'_1$ contained in $\tau$;
\item  a $G$-rational point $w$  in the relative interior of $\gamma$ such that $p(w)=w'$;
\item a face $\sigma'$ of the recession cone of $\gamma$ such that $\sigma'$ is contained in $\sigma$ and of the same dimension as $\sigma$.
\end{enumerate}
   We denote by $T(\sigma')$ the torus orbit of $\PP(\Sigma')_K$ corresponding to $\sigma'$.
      The inclusion of $\sigma'$ in $\sigma$ induces an isomorphism of tori $T(\sigma')\to T(\sigma)$.
  Let $\mathcal{X}'$ be the closure of $X$ in $\mathbb{P}(\Sigma')$, and denote by $X(\sigma')$ the intersection of $\mathcal{X}'_K$ with $T(\sigma')$, with its reduced induced structure. Since $\Sigma'$ is a tropical fan for $X$, the multiplication morphism $\mathcal{X}'\times \mathbb{T}\to \mathbb{P}(\Sigma')$ is faithfully flat. This property is preserved by base change to $K$; thus the recession fan of $\Sigma'$ is a tropical fan for $X$ with respect to the trivial absolute value on $K$.
 We can choose $\Sigma'$ in such a way that $\rec(\Sigma')$ is a refinement of a fan that contains $\sigma$ as a cone.
    Then it follows from \cite[4.4]{LQ} that $X(\sigma')$ is
 equal to
    the inverse image of $X(\sigma)$ in $T(\sigma')$.
   Therefore, the initial degeneration of $X(\sigma')$ at $w'$ is isomorphic to $\mathrm{in}_{w'}X(\sigma)$.
 On the other hand, since $\Sigma'$ is a tropical fan for $X$, the proof of Proposition \ref{prop:tropvol2} shows that $\mathrm{in}_{w} X$ is isomorphic to $\mathrm{in}_{w'}X(\sigma')\times \mathbb{G}_{m,k}^{\dim(\sigma')}$. This concludes the proof.
\end{proof}

\subsection{Comparison with the motivic nearby fiber}
Another situation where we can explicitly compute the motivic volume is the following. We say that a flat $R$-scheme of finite type is {\em strictly semi-stable} if it can be covered with open subschemes that admit an \'etale morphism to an $R$-scheme of the form
 $$S_{d,r,a}=\Spec R[x_0,\ldots,x_d]/(x_0\cdot \ldots \cdot x_r-a)$$
where $r\leq d$ and $a$ is a non-zero element of the maximal ideal of $R$.  Let $\cX$ be a strictly semi-stable $R$-scheme of pure relative dimension $d$. We denote by $E_i,\,i\in I$ the irreducible components of $\cX_k$. For every non-empty subset $J$ of $I$, we set
  $$E_J=\bigcap_{j\in J} E_j,\quad E_J^o=E_J\smallsetminus \left(\bigcup_{i\notin J}E_i\right).$$
The sets $E_J^o$ form a stratification of $\cX_k$ into locally closed subsets.

\begin{prop}\label{prop:sstablevol}
For every non-empty subset $J$ of $I$, we have
  $$[\spe_{\cX}^{-1}(E_J^o)]=(-1)^{|J|-1}\Theta([E_J^o]_{d+1-|J|}\otimes [1]_{|J|-1})$$
in $K_0(\VF_K)$. In particular,
  $$[\cX(R)]=\sum_{\emptyset \neq J\subset I}(-1)^{|J|-1}\Theta([E_J^o]_{d+1-|J|}\otimes [1]_{|J|-1})$$
in $K_0(\VF_K)$.
\end{prop}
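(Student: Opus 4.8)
The plan is to reduce the assertion to an explicit computation on the standard local model $S_{d,r,a}$, where it follows from Proposition~\ref{prop:scale}. Throughout, $[\,\cdot\,]$ denotes the class in $K_0(\VF_K)$. The computational input is that $\Theta$ is a ring morphism sending $[Y]_m\in K_0(\RES_K[m])$ to $[\spe_{\cY}^{-1}(Y)]$, for an \'etale $\AA^m_R$-scheme $\cY$ lifting a quasi-finite $Y\to\AA^m_k$, and sending $[P]_n\in K_0(G[n])$ to $[\trop^{-1}(P)]$, for a definable set $P\subseteq G^n$; consequently $\Theta([Y]_m\otimes[P]_n)=[\spe_{\cY}^{-1}(Y)]\cdot[\trop^{-1}(P)]$, which, when $Y$ is smooth, equals $[\cY(R)]\cdot[\trop^{-1}(P)]$ by Proposition~\ref{prop:smoothvol} and is independent of the chosen lift because $R$ is henselian.

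First I would treat the local model $\cX=S_{d,r,a}=\Spec R[x_0,\ldots,x_d]/(x_0\cdots x_r-a)$, for $\emptyset\neq J\subseteq\{0,\ldots,r\}$, and set $m=|J|$. Here $E_J^o=\{x_j=0\ (j\in J),\ x_i\neq 0\ (i\leq r,\ i\notin J)\}$ is isomorphic to $\mathbb{G}_{m,k}^{r+1-m}\times\AA^{d-r}_k$, of dimension $d+1-|J|$, and
\[
\spe_{S}^{-1}(E_J^o)=\{x\in R^{d+1}\,:\,x_0\cdots x_r=a,\ \val(x_j)>0\ (j\in J),\ \val(x_i)=0\ (i\leq r,\ i\notin J)\}.
\]
Fixing $j_0\in J$ and eliminating $x_{j_0}=a\bigl(\prod_{i\leq r,\ i\neq j_0}x_i\bigr)^{-1}$, the remaining coordinates give a semialgebraic bijection
\[
\spe_{S}^{-1}(E_J^o)\;\cong\;\trop^{-1}(\Delta^o_{m-1,\val(a)})\times(R^\times)^{r+1-m}\times R^{d-r}
\]
that commutes with specialization to $E_J^o$ through the last two factors (note $\val(a)>0$, and the coordinates in the first factor reduce to $0$). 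Since $[\Delta^o_{m-1,\val(a)}]_{m-1}=(-1)^{m-1}[1]_{m-1}$ by Proposition~\ref{prop:scale} and $[(R^\times)^{r+1-m}\times R^{d-r}]=\Theta([\mathbb{G}_{m,k}^{r+1-m}\times\AA^{d-r}_k]_{d+1-m})=\Theta([E_J^o]_{d+1-|J|})$, this gives
\[
[\spe_{S}^{-1}(E_J^o)]=(-1)^{m-1}\Theta([1]_{m-1})\cdot\Theta([E_J^o]_{d+1-|J|})=(-1)^{|J|-1}\Theta([E_J^o]_{d+1-|J|}\otimes[1]_{|J|-1}).
\]

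Next I would globalize. By the definition of strict semi-stability, $E_J^o$ is covered by open subsets of the form $E_J^o\cap\cU_k$, where $\cU\subseteq\cX$ is open and carries an \'etale morphism $h:\cU\to S_{d,r,a}$; for such a $\cU$ one has $E_J^o\cap\cU_k=h_k^{-1}(E_{J'}^o)$ for a stratum $E_{J'}^o$ of $S_{d,r,a}$ with $|J'|=|J|$, so $h_k$ restricts to an \'etale morphism $E_J^o\cap\cU_k\to E_{J'}^o$. By additivity (and the inclusion-exclusion argument used at the end of the proof of Proposition~\ref{prop:tropvol}), it suffices to show that $[\spe_{\cU}^{-1}(U')]=(-1)^{|J|-1}\Theta([U']_{d+1-|J|}\otimes[1]_{|J|-1})$ for every open $U'\subseteq E_J^o\cap\cU_k$. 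For such a $U'$, the morphism $U'\to E_{J'}^o$ is \'etale, so by the henselian property of $R$ every $R$-point of $\cU$ reducing into $U'$ is the unique lift, through a prescribed point of $U'$, of its image in $\spe_{S}^{-1}(E_{J'}^o)$; thus $\spe_{\cU}^{-1}(U')$ is in semialgebraic bijection over $U'$ with $\spe_{S}^{-1}(E_{J'}^o)\times_{E_{J'}^o}U'$. Feeding in the product description $\spe_{S}^{-1}(E_{J'}^o)\cong\trop^{-1}(\Delta^o_{|J|-1,\val(a)})\times\spe_{\mathcal{E}}^{-1}(E_{J'}^o)$ from the local model, with $\mathcal{E}=\mathbb{G}_{m,R}^{r+1-|J|}\times\AA^{d-r}_R$ a smooth $R$-lift of $E_{J'}^o$ and the projection to $E_{J'}^o$ given by the second factor, the fiber product affects only that factor, and the henselian property identifies $\spe_{\mathcal{E}}^{-1}(E_{J'}^o)\times_{E_{J'}^o}U'$ with $\cW(R)$ for an \'etale $\AA^{d+1-|J|}_R$-scheme $\cW$ with $\cW_k=U'$. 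Hence
\[
[\spe_{\cU}^{-1}(U')]=[\trop^{-1}(\Delta^o_{|J|-1,\val(a)})]\cdot[\cW(R)]=(-1)^{|J|-1}\Theta([U']_{d+1-|J|}\otimes[1]_{|J|-1}).
\]
Summing finally over the finitely many $\emptyset\neq J\subseteq I$, and using that the $E_J^o$ partition $\cX_k$ so that the sets $\spe_{\cX}^{-1}(E_J^o)$ partition $\cX(R)$, yields the ``in particular'' statement.

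The step I expect to be the hardest is the globalization, specifically verifying that passing through \'etale charts $h:\cU\to S_{d,r,a}$ introduces no spurious covering degree. The point is to lift $R$-points along $h$ into $\cX$ itself, rather than to compare $\cX$ with the local model directly, so that the \'etale $R$-scheme whose class enters the formula lifts a locally closed piece $U'$ of the special fiber of $\cX$ and not a nontrivial cover of such a piece --- this works because $U'$ maps \'etale to the stratum $E_{J'}^o$ of $S_{d,r,a}$ but is not a cover of it. A secondary, purely formal point is to recognize the polyhedral factor as the open simplex $\Delta^o_{|J|-1,\val(a)}$, whose class is $(-1)^{|J|-1}$ times the class of a point by Proposition~\ref{prop:scale}.
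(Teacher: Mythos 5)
Your argument is correct and is essentially the paper's own proof: reduce by additivity to the standard local model $S_{d,r,a}$, recognize the polyhedral factor as $\Delta^o_{|J|-1,\val(a)}$ and apply Proposition \ref{prop:scale}, and split off the smooth stratum direction by henselian lifting of $R$-points along the \'etale chart, finishing with multiplicativity of $\Theta$. The only differences are organizational: the paper first shrinks the charts so that $E_J^o$ becomes the preimage of the deepest stratum of $S_{d,|J|-1,a}$, which lets it realize your abstract lift $\cW$ concretely inside the chart as $\cY=h^{-1}(\{y\}\times\AA^{d-r}_R)$ (avoiding the appeal to an \'etale lift of $U'$, which is the same input as in the definition of $\Theta$) and also sidesteps your harmless overstatement that $E_J^o\cap\cU_k=h_k^{-1}(E_{J'}^o)$ --- in general this holds only after further shrinking or decomposing into open-closed pieces, but your argument only uses the inclusion $U'\subset h_k^{-1}(E_{J'}^o)$, which does hold locally.
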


\begin{proof}
By additivity, it suffices to prove the first assertion, and we may assume that
there exists an \'etale morphism
 $$h:\cX\to S_{d,r,a}=\Spec R[x_0,\ldots,x_d]/(x_0\cdot \ldots \cdot x_r-a),$$
with $r=|J|-1$ and $a$ a non-zero element of the maximal ideal of $R$, such that $E_J^o$ is the inverse image under $h$ of the zero locus of $(x_0,\ldots,x_r)$ in the special fiber of $S_{d,r,a}$. We write
$$S_{d,r,a}=S_{r,r,a}\times_R \AA^{d-r}_R$$ and we denote by $O$ the origin of the special fiber of $S_{r,r,a}$.
 We choose a point $y$ in $\spe_{S_{r,r,a}}^{-1}(O)$ and we denote by $\cY$ the  inverse image of $\{y\}\times \AA^{d-r}_R$ under $h$. Then $\cY$ is an \'etale  $\AA^{d-r}_R$-scheme with special fiber $E_J^o$. Now we consider the map
\begin{equation}\label{eq:semi-alg}
\spe_{\cX}^{-1}(E_J^o)\to \spe_{S_{r,r,a}}^{-1}(O)\times \cY(R):u\mapsto (v_1(u),v_2(u))
\end{equation}
where $v_1(u)$ is the projection of $h(u)\in S_{d,r,a}(R)$ onto $S_{r,r,a}(R)$ and $v_2(u)$ is the unique point in $\cY(R)$ such that $\spe_{\cX}(u)=\spe_{\cY}(v_2(u))$, and such that $h(v_2(u))$ is the projection of $h(u)$ onto $\AA^{d-r}_R(R)$. It is clear from the construction that the map \eqref{eq:semi-alg} is a semialgebraic bijection, and thus an isomorphism in the category $\VF_K$. The class of $\cY(R)$ in $K_0(\VF_K)$ is precisely $\Theta([E_J^o]_{d+1-|J|})$, by Proposition \ref{prop:smoothvol}.  By projection on the last $r$ coordinates, we can identify the semialgebraic set $\spe_{S_{r,r,a}}^{-1}(O)$ with $$ \{x\in (K^{\ast})^{r}\mid \sum_{i=1}^r \val(x_i)<\val(a)\mbox{ and }\val(x_i)>0\mbox{ for all }i\}.$$
Thus, with the notation from Proposition \ref{prop:scale},
  $$[\spe_{S_{r,r,a}}^{-1}(O)]=\Theta([\Delta^o_{r,\val(a)}]_r)=(-1)^r\Theta([1]_r)$$
 in $K_0(\VF_K)$ by the definition of the morphism $\Theta$.
 The result now follows from the multiplicativity of $\Theta$.
\end{proof}

\begin{cor}\label{cor:sstablecor}
With the notations of Proposition \ref{prop:sstablevol}, we have
$$\Vol(\cX(R))=\sum_{\emptyset \neq J\subset I}[E_J^o](1-\LL)^{|J|-1}$$
in $K_0(\Var_k)$.
\end{cor}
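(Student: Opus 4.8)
The plan is to apply the ring morphism $\Vol=\Psi\circ\Theta'$ to the expression for $[\cX(R)]$ in $K_0(\VF_K)$ provided by Proposition \ref{prop:sstablevol}, and to evaluate the result term by term. First I would recall that $\Theta$ factors through an isomorphism $(K_0(\RES_K[\ast])\otimes_{\ZZ[\tau]}K_0(G[\ast]))/I_{\spe}\risom K_0(\VF_K)$ with inverse $\Theta'$, and that $\Psi$ is induced by the ring morphism $K_0(\RES_K[\ast])\otimes_{\ZZ[\tau]}K_0(G[\ast])\to K_0(\Var_k)$ assembled from \eqref{eq:res} and \eqref{eq:chi}; hence $\Vol(\Theta(\alpha))=\Psi(\alpha)$ for every $\alpha$ in the tensor product, where I also write $\Psi$ for the map before passing to the quotient (this is legitimate since $\Psi$ kills $I_{\spe}$).

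Next I would take $\alpha=[E_J^o]_{d+1-|J|}\otimes[1]_{|J|-1}$. Using multiplicativity of $\Psi$ together with \eqref{eq:res}, which sends $[E_J^o]_{d+1-|J|}$ to $[E_J^o]$, and \eqref{eq:chi}, which sends a class in $K_0(G[|J|-1])$ to $\chi'$ of that class times $(\LL-1)^{|J|-1}$, I obtain
\[
\Vol\bigl(\Theta([E_J^o]_{d+1-|J|}\otimes[1]_{|J|-1})\bigr)=[E_J^o]\cdot\chi'([1]_{|J|-1})\,(\LL-1)^{|J|-1}.
\]
Since $[1]_{|J|-1}$ is the class of a point of $G^{|J|-1}$, that is, of a zero-dimensional closed polyhedron, the characterization of $\chi'$ recalled before Proposition \ref{prop:chi-poly} (or that proposition applied to a point) gives $\chi'([1]_{|J|-1})=1$. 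Summing over all non-empty $J\subset I$ with the signs $(-1)^{|J|-1}$ coming from Proposition \ref{prop:sstablevol}, and absorbing each sign into the power of $\LL-1$ via $(-1)^{|J|-1}(\LL-1)^{|J|-1}=(1-\LL)^{|J|-1}$, then yields the claimed formula in $K_0(\Var_k)$.

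I do not expect any genuine obstacle: the corollary is a direct unravelling of the definition of the motivic volume once Proposition \ref{prop:sstablevol} is in hand. The only points requiring care are keeping track of the grading in $K_0(\RES_K[\ast])\otimes_{\ZZ[\tau]}K_0(G[\ast])$, so that exactly the factor $(\LL-1)^{|J|-1}$ and no other power is produced by \eqref{eq:chi}, and the small but essential observation that $\chi'$ sends the class of a point to $1$; the compatibility of $\Psi$ with the tensor product structure over $\ZZ[\tau]$ is already built into its construction, so nothing further is needed there.
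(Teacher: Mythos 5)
Your argument is correct and is exactly the intended one: the paper's proof simply says the corollary ``follows at once'' from Proposition \ref{prop:sstablevol}, meaning precisely the unravelling you carry out, namely applying $\Vol$ term by term via $\Vol(\Theta(\alpha))=\Psi(\alpha)$, using \eqref{eq:res}, \eqref{eq:chi}, and $\chi'([1]_{|J|-1})=1$ to get $[E_J^o](\LL-1)^{|J|-1}$, and absorbing the sign $(-1)^{|J|-1}$ into $(1-\LL)^{|J|-1}$.
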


\begin{proof}
This follows at once from Proposition \ref{prop:sstablevol}.
\end{proof}

Using Corollary \ref{cor:sstablecor}, we can compare the motivic volume to other motivic invariants that appear in the literature: the motivic nearby fiber of Denef--Loeser \cite[3.5.3]{DL} and the motivic volume of smooth rigid varieties defined by Sebag and the first-named author \cite[8.3]{NiSe}. The motivic nearby fiber was defined as a motivic incarnation of the complex of nearby cycles associated with a flat and generically smooth morphism of $k$-varieties $f:Z\to \AA^1_k$, and the motivic volume of a smooth rigid variety extends this construction to formal schemes over $k\llbracket t\rrbracket$.
\begin{cor}\label{cor:compar} We assume that $K$ is an algebraically closed valued field extension of $k(\!(t)\!)$.

\begin{enumerate}

\item Let $f:Z\to \Spec k[t]$ be a flat and generically smooth morphism of $k$-schemes of finite type, and denote by $\cZ$ the base change of $Z$ to the valuation ring $R$ of $K$.
    Then the image of $\Vol(\cZ(R))$ in the localized Grothendieck ring $\mathcal{M}_k=K_0(\Var_k)[\LL^{-1}]$ is equal to Denef and Loeser's motivic nearby fiber of $f$ (forgetting the $\widehat{\mu}$-action).

\item Let $\cX$ be a generically smooth flat $k\llbracket t\rrbracket$-scheme of finite type of pure relative dimension $d$, and denote by $\widehat{\cX}$ its formal $t$-adic completion. Then the image of $\LL^{-d}\Vol(\cX(R))$ in $\mathcal{M}_k$ is equal to the motivic volume of the generic fiber of  $\widehat{\cX}$ (which is a quasi-compact smooth rigid $k(\!(t)\!)$-variety).
\end{enumerate}

\end{cor}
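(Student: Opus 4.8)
The plan is to reduce all four equalities to the case of a strictly semi-stable model, where every invariant in sight is computed by the explicit formula of Corollary \ref{cor:sstablecor}.

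For part (1), write $\cZ=Z\times_{k[t]}R$. First I would enlarge $K$ — harmless by Proposition \ref{prop:complete} — so as to assume $K$ complete; then $\val(t)>0$ forces $k\llbracket t^{1/e}\rrbracket\subset R$ for every $e\geq 1$. Next I would apply semi-stable reduction over $k\llbracket t^{1/e}\rrbracket$, available in equicharacteristic zero as a modification rather than merely an alteration: for a suitable $e$, after normalizing $Z\times_{k[t]}k\llbracket t^{1/e}\rrbracket$ and performing a proper modification one obtains a strictly semi-stable $k\llbracket t^{1/e}\rrbracket$-scheme $\cZ'$; its base change $\cZ'_R$ to $R$ is strictly semi-stable over $R$, with special fibre $(\cZ')_k=\bigcup_{i\in I}E_i$. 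Because $f$ is generically smooth, every $K$-point of $\cZ$ has image in $Z$ lying over the generic point of $\Spec k[t]$, hence in the smooth — in particular normal — locus; the valuative criterion of properness then shows that normalization and the proper modification induce bijections on $R$-points, whence $\Vol(\cZ(R))=\Vol(\cZ'_R(R))$. Corollary \ref{cor:sstablecor} now yields
$$\Vol(\cZ(R))=\sum_{\emptyset\neq J\subset I}[E_J^o]\,(1-\LL)^{|J|-1}\quad \in K_0(\Var_k).$$

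On the other side, I would use two facts about the Denef--Loeser motivic nearby fiber, read modulo the $\widehat{\mu}$-action and in $\mathcal{M}_k$: it is unchanged by the ramified base change $k(\!(t)\!)\to k(\!(t^{1/e})\!)$ (this alters only the monodromy, not the geometric generic fibre $Z\times_{k(t)}\overline{k(t)}$), and on the strictly semi-stable model $\cZ'$ the Denef--Loeser formula computes it as $\sum_{\emptyset\neq J}[E_J^o](1-\LL)^{|J|-1}$ — all multiplicities $N_i$ being $1$, the étale covers $\widetilde{E_J^o}\to E_J^o$ that occur in the general formula are trivial (cf. \cite{DL}, \cite{NiSe}). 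Comparing the two expressions proves (1). Part (2) goes through in exactly the same way with formal $k\llbracket t\rrbracket$-schemes in place of $k$-varieties over $\AA^1_k$: the Nicaise--Sebag motivic volume of a smooth quasi-compact rigid variety of dimension $d$ is likewise insensitive to ramified base change and, on a strictly semi-stable formal model, equals $\LL^{-d}\sum_{\emptyset\neq J}[E_J^o](1-\LL)^{|J|-1}$, which by Corollary \ref{cor:sstablecor} is $\LL^{-d}\Vol(\cX(R))$. The normalizing factor $\LL^{-d}$ is forced by Proposition \ref{prop:smoothvol}, which gives $\Vol(\cX(R))=[\cX_k]$ for a smooth formal model, against the value $\LL^{-d}[\cX_k]$ of the rigid motivic volume of the corresponding smooth rigid variety. (Alternatively, (1) can be deduced from (2) together with the comparison, established in \cite{NiSe}, between the motivic volume of the rigid generic fibre of $\widehat{Z}$ and the motivic nearby fiber of $f$.)

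The hard part will not be any new geometric input — that is supplied by Corollary \ref{cor:sstablecor} and the classical semi-stable description of the nearby cycles — but the bookkeeping in the reduction step: one must verify carefully that passing to a strictly semi-stable model through a completion, a finite base change, a normalization, and a proper modification changes neither $\Vol(\cZ(R))$ nor the image in $\mathcal{M}_k$ of the motivic nearby fiber. The subtle points are the behaviour of $R$-points of non-proper $R$-schemes under these operations, handled as above via generic smoothness, and the matching of the normalizations built into the three a priori distinct definitions of ``motivic volume'' — Hrushovski--Kazhdan, Denef--Loeser, Nicaise--Sebag — so that the explicit semi-stable formulas coincide on the nose, including the $\LL^{-d}$ shift in part (2).
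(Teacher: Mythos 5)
Your overall strategy is sound, but it is organized quite differently from the paper's argument, which is essentially a citation proof: assertion (2) is read off from the explicit formula for the rigid motivic volume in \cite[6.11]{Ni-sing} (compared with Corollary \ref{cor:sstablecor}), and assertion (1) is then deduced from (2) via the comparison \cite[9.13]{NiSe} between the Nicaise--Sebag volume and the Denef--Loeser nearby fiber -- i.e.\ the route you only mention parenthetically at the end. What your version buys is a self-contained reduction, via semistable reduction and Proposition \ref{prop:complete}, to the strictly semi-stable case where Corollary \ref{cor:sstablecor} applies on the Hrushovski--Kazhdan side; the $R$-point bookkeeping you describe (generic smoothness, valuative criterion for the normalization and the modification, semialgebraicity of the resulting bijection) does go through, and the base change of a strictly semi-stable $k\llbracket t^{1/e}\rrbracket$-model to $R$ is indeed strictly semi-stable in the sense used in Proposition \ref{prop:sstablevol}.

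The soft spot is on the Denef--Loeser side. The motivic nearby fiber is defined through the motivic zeta function of the given model $f:Z\to \Spec k[t]$, not as an invariant of the geometric generic fibre, so the justification ``the ramified base change alters only the monodromy, not the geometric generic fibre'' is not an argument: invariance of $\psi_f$ in $\mathcal{M}_k$ (after forgetting the $\widehat{\mu}$-action) under $t\mapsto t^{1/e}$ is a true but nontrivial statement, proved by computing the A'Campo--Denef--Loeser formula on the normalized base change of an snc model, where the covers $\widetilde{E_J^o}$ reappear as the reduced strata -- or, more efficiently, by citing exactly the comparison results \cite[9.13]{NiSe} and \cite[6.11]{Ni-sing} that the paper uses, at which point your proof collapses onto the paper's. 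A second, related wrinkle: after base change and modification your semi-stable model lives over $\Spec k\llbracket t^{1/e}\rrbracket$, which is not of finite type over $k$, so ``the Denef--Loeser formula on $\cZ'$'' needs either a spreading-out argument or the extension of the theory to complete discrete valuation rings (as in \cite{NiSe}). By contrast, your treatment of the Nicaise--Sebag side is fine: insensitivity to ramified base change is built into its definition as the stable value of the motivic Serre invariant, and the semi-stable formula (with the $\LL^{-d}$ normalization you identify) is available in the cited literature. So: correct in outline, but the base-change invariance of the nearby fiber must be proved or cited properly rather than argued from the geometric generic fibre.
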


\begin{proof}
The first assertion is a special case of the second, by the comparison result in \cite[9.13]{NiSe}. The second assertion follows from the explicit formula in \cite[6.11]{Ni-sing}.
\end{proof}

This result shows, in particular, that the motivic nearby fiber is well-defined as an element of $K_0(\Var_k)$, without inverting $\LL$. This is not at all obvious: inverting $\LL$ is an essential step in the definition of the motivic zeta function (which is used to construct the motivic nearby fiber), and it was recently proved by Borisov that $\LL$ is a zero divisor in $K_0(\Var_k)$ \cite{Bo}.

\begin{rem}
 Combining Corollary \ref{cor:tropvol}, Proposition \ref{prop:tropvol2} and Corollary \ref{cor:compar}, we recover the formulas of Katz and Stapledon for motivic nearby fibers of sch\"{o}n varieties over the field of meromorphic germs at the origin of the complex plane; see \cite[5.1]{KS1} and \cite[2.4]{KS14}. Their method was entirely different: they first showed that the desired tropical formula for the motivic nearby fiber was independent of the choice of a tropical fan, and used this to reduce to the case where $\cX$ is a strictly semi-stable model, where one can use the explicit formula for the motivic nearby fiber. Corollary \ref{cor:compar} is also closely related to similar comparison results by Hrushovski and Loeser for the motivic zeta function \cite{HL15}, but our approach is more direct if one only wants to retrieve the motivic nearby fiber; in particular, we avoid inverting $\LL$.
 \end{rem}

\subsection{The $\chi_{-y}$-genus of a semialgebraic set}\label{ss:chiy}
 We can use the motivic volume to define the limit $\chi_{-y}$-genus of a semialgebraic set over $K$.
 Recall that, for every field $F$ of characteristic zero, there exists a unique ring morphism
 $$\chi_{-y}:K_0(\Var_F)\to \ZZ[y]$$ that maps the class of every smooth and proper $F$-scheme $Z$ to $$\chi_{-y}(Z)=\sum_{p,q\geq 0}(-1)^{p+q}h^{p,q}(Z)y^q=\sum_{q\geq 0}(-1)^q\chi(Z,\Omega^{q}_Z)y^q.$$
 For every $F$-scheme of finite type $Z$, we denote by $\chi_{-y}(Z)$ the image of $[Z]$ under $\chi_{-y}$. This invariant is called the $\chi_{-y}$-genus of $Z$.

 \begin{defn}\label{def:chiylim}
 We define the limit $\chi_{-y}$-genus of a semialgebraic set $S$ over $K$ by
 $$\chi^{\lim}_{-y}(S)=\chi_{-y}(\Vol(S))\quad \in \ZZ[y].$$
 \end{defn}

 By specializing our tropical formulas for the motivic volume (Corollary \ref{cor:tropvol} and Proposition \ref{prop:tropvol2}) with respect to $\chi_{-y}$, we immediately obtain the following expressions.

 \begin{prop}\label{prop:tropchiy}
Let $X$ be a sch\"{o}n reduced closed subscheme of $T$ and let $\Sigma$ be a tropical fan for $X$. Denote by $\overline{X}$ the schematic closure of $X$ in $\PP(\Sigma)_K$.
 Then
 $$\chi^{\lim}_{-y}(X(K))=\sum_{\gamma}(-1)^{\mathrm{dim}(\gamma)}\chi_{-y}(\mathrm{in}_{\gamma}X)\quad
\in \ZZ[y]$$
where the sum is taken over the bounded cells $\gamma$ of $\Sigma_1$. Moreover,
 \begin{eqnarray*}
 \chi^{\lim}_{-y}(\overline{X}(K))&=&\sum_{\gamma\in \Sigma_1}(1-y)^{\mathrm{dim}(\gamma)-\mathrm{dim}(\rec(\gamma))}\chi_{-y}(\cX_k(\gamma))
 \\ &=&\sum_{\gamma\in \Sigma_1}(-1)^{\mathrm{dim}(\gamma)-\mathrm{dim}(\rec(\gamma))}(y-1)^{-\mathrm{dim}(\rec(\gamma))}\chi_{-y}(\mathrm{in}_{\gamma}X)
\end{eqnarray*}
in $\ZZ[y]$.
\end{prop}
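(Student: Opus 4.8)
The plan is to obtain both identities by applying the ring morphism $\chi_{-y}:K_0(\Var_k)\to\ZZ[y]$ to the tropical formulas for the motivic volume already established in Corollary \ref{cor:tropvol} and Proposition \ref{prop:tropvol2}, and then invoking Definition \ref{def:chiylim}, which states that $\chi^{\lim}_{-y}(S)=\chi_{-y}(\Vol(S))$. Since $\chi_{-y}$ is a ring homomorphism, it is automatically additive over the semialgebraic partitions occurring in those formulas and multiplicative on products, which is all the structure we shall use.

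First I would record the two elementary computations $\chi_{-y}(\LL)=\chi_{-y}(\PP^1)-\chi_{-y}(\Spec k)=(1+y)-1=y$, hence $\chi_{-y}(1-\LL)=1-y$, and $\chi_{-y}(\GG_{m,k})=\chi_{-y}(\LL)-1=y-1$. Because $X$ is sch\"{o}n, $\inn_\gamma X$ is smooth over $k$ for every cell $\gamma$ of $\Sigma_1$, so Corollary \ref{cor:tropvol} applies and gives $\Vol(X)=\sum_\gamma(-1)^{\dim\gamma}[\inn_\gamma X]$ in $K_0(\Var_k)$, the sum being over the bounded cells; applying $\chi_{-y}$ yields the first displayed formula at once.

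For the closure $\overline X$, applying $\chi_{-y}$ to the identity of Proposition \ref{prop:tropvol2}, namely $\Vol(\overline X)=\sum_{\gamma\in\Sigma_1}(1-\LL)^{\dim\gamma-\dim\rec(\gamma)}[\cX_k(\gamma)]$, produces the first line $\sum_{\gamma}(1-y)^{\dim\gamma-\dim\rec(\gamma)}\chi_{-y}(\cX_k(\gamma))$. To pass to the second line I would use the description from Step 1 of the proof of Proposition \ref{prop:tropvol}: $\inn_\gamma X$ is a trivial torsor over $\cX_k(\gamma)$ under a split $k$-torus of dimension $\dim\gamma$, so that $[\inn_\gamma X]=(\LL-1)^{\dim\gamma}[\cX_k(\gamma)]$ in $K_0(\Var_k)$, whence $\chi_{-y}(\inn_\gamma X)=(y-1)^{\dim\gamma}\chi_{-y}(\cX_k(\gamma))$ in $\ZZ[y]$. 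Substituting $\chi_{-y}(\cX_k(\gamma))=(y-1)^{-\dim\gamma}\chi_{-y}(\inn_\gamma X)$ into the first line and using $(1-y)^m=(-1)^m(y-1)^m$ gives the claimed expression $\sum_\gamma(-1)^{\dim\gamma-\dim\rec(\gamma)}(y-1)^{-\dim\rec(\gamma)}\chi_{-y}(\inn_\gamma X)$; the negative power of $y-1$ is harmless because the torsor relation shows $(y-1)^{\dim\gamma}$ divides $\chi_{-y}(\inn_\gamma X)$ in $\ZZ[y]$ and $\dim\rec(\gamma)\le\dim\gamma$.

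There is no genuine obstacle here: all the substantial work was already carried out in Corollary \ref{cor:tropvol} and Proposition \ref{prop:tropvol2}, and the present statement is a formal specialization. The only points demanding some care are the sign bookkeeping and, in the second line, ensuring that the expression involving $(y-1)^{-\dim\rec(\gamma)}$ is read as an honest element of $\ZZ[y]$ by means of the divisibility just noted.
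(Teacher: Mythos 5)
Your proposal is correct and takes essentially the same route as the paper, whose proof consists precisely in applying $\chi_{-y}$ to the tropical formulas for the motivic volume (Corollary \ref{cor:tropvol} together with Propositions \ref{prop:tropvol2} and \ref{prop:tropvol3}). Your only deviation is that, instead of citing Proposition \ref{prop:tropvol3} for the second line, you re-derive it from Proposition \ref{prop:tropvol2} via the torsor relation $[\inn_\gamma X]=(\LL-1)^{\dim\gamma}[\cX_k(\gamma)]$ from Step 1 of the proof of Proposition \ref{prop:tropvol}; this is equivalent, and it also correctly justifies reading the factor $(y-1)^{-\dim\rec(\gamma)}$ as an honest element of $\ZZ[y]$.
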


\begin{proof}
This follows at once from Corollary \ref{cor:tropvol} and Proposition \ref{prop:tropvol3}.
\end{proof}

 For every $K$-scheme of finite type $X$, we can consider both its $\chi_{-y}$-genus $\chi_{-y}(X)$ and the limit $\chi_{-y}$-genus $\chi_{-y}^{\lim}(X(K))$. We expect that these invariants always coincide; unfortunately, we do not know how to prove this without assuming a suitable form a resolution of singularities for schemes over $R$. Therefore, we limit ourselves to the following partial result, which is sufficient for the applications in this paper.

\begin{prop}\label{prop:limnolim}
Assume that $K$ is an algebraic closure of a henselian discretely valued field. Then for every algebraic $K$-scheme of finite type $X$, we have
$$\chi_{-y}^{\lim}(X(K))=\chi_{-y}(X).$$
\end{prop}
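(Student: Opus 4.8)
The plan is to reduce the statement to the comparison between the motivic volume and the motivic nearby fiber established in Corollary~\ref{cor:compar}, and then to invoke a known compatibility of the motivic nearby fiber with Hodge-theoretic realizations. First I would reduce to the case where $K$ is an algebraic closure $\overline{k(\!(t)\!)}$ of the Laurent series field over an algebraically closed field $k$ of characteristic zero: the base change invariance in Proposition~\ref{prop:complete} lets us pass freely between $K$ and its completion, and, since the valuation on a henselian discretely valued field $K_0$ with algebraic closure $K$ is, after renormalizing the value group, the same as the $t$-adic one on $\overline{k(\!(t)\!)}$, the semialgebraic set $X$ is already defined over a subfield isomorphic to $\overline{k(\!(t)\!)}$; base change does not change $\Vol$, and $\chi_{-y}(X)$ is insensitive to extension of the algebraically closed base field. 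Next, one may spread $X$ out: by Noetherian approximation, $X$ is the base change to $K$ of a finite-type scheme over $k[t]$ (or over $k[[t]]$), so that $\chi_{-y}(X)$ equals the $\chi_{-y}$-genus of a general fiber of a family $f : Z \to \Spec k[t]$.

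The heart of the argument is then to compare two invariants of $f$: on the one hand $\Vol(\cZ(R))$, which by Corollary~\ref{cor:compar}(1) maps in $\mathcal{M}_k$ to the Denef--Loeser motivic nearby fiber $\psi_f$ of $f$ (forgetting the $\widehat\mu$-action); on the other hand the class $[X]$ of a general fiber of $f$. The key input I would use is the well-known property that the Hodge realization of the motivic nearby fiber computes the limit mixed Hodge structure on the cohomology of the nearby fiber, and in particular that applying $\chi_{-y}$ (which factors through the $E$-polynomial / Hodge realization) to $\psi_f$ yields $\chi_{-y}$ of a general smooth fiber when $f$ is smooth, and more generally — by additivity over a stratification of the special fiber and induction on dimension — yields $\chi_{-y}$ of a general fiber of $f$ regardless of singularities of the total space. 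Concretely: since $\chi_{-y}$ is a ring morphism $K_0(\Var_k) \to \ZZ[y]$ that kills $\LL - y$ is not true, but $\chi_{-y}(\LL) = y$ is a unit's image, so $\chi_{-y}$ extends to $\mathcal{M}_k = K_0(\Var_k)[\LL^{-1}]$, and one checks $\chi_{-y}(\psi_f) = \chi_{-y}(\text{general fiber of } f)$ on the generators provided by a strict normal crossings model, using the explicit A'Campo-type formula for $\psi_f$ and the corresponding formula for the limit Hodge numbers. This last identity is the classical statement that the $\chi_{-y}$-specialization is a proper-smooth-base-change-stable invariant, i.e.\ that $\chi_{-y}$ is constant in flat proper families and, via the nearby-cycles description, equals the value on the special "limit" — a fact one can cite from the literature on motivic nearby fibers and limit mixed Hodge structures (e.g.\ Denef--Loeser, Bittner).

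I expect the main obstacle to be handling the non-properness of $X$ and of the family $f$ cleanly: the invariance of $\chi_{-y}$ in families is transparent for smooth proper families but here $X$ is an arbitrary finite-type $K$-scheme, so one must either compactify $f$ relative to $\Spec k[t]$ and argue by additivity with the boundary (using that $\chi_{-y}$ is additive and that both $\Vol$ and the nearby fiber are additive), or argue directly on a semi-stable model as in Corollary~\ref{cor:sstablecor}, matching term by term the formula $\Vol(\cX(R)) = \sum_{\emptyset\neq J}[E_J^o](1-\LL)^{|J|-1}$ against the A'Campo formula for the $\chi_{-y}$-specialization of the nearby fiber. The subtlety is that a general $K$-variety need not admit a semi-stable model without a suitable form of resolution/semi-stable reduction over $R$ — but over $\overline{k(\!(t)\!)}$, where $k$ has characteristic zero, one does have resolution of singularities and semi-stable reduction after base change, and base change to a further ramified extension does not affect $\Vol$ (Proposition~\ref{prop:complete}) nor the $\chi_{-y}$-genus of the fiber; this is precisely where the hypothesis that $K$ is the algebraic closure of a \emph{discretely valued} field is used. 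Once a semi-stable model is in hand, the computation is the routine bookkeeping comparing $(1-\LL)$ against $(1-y)$ under $\chi_{-y}$, together with the standard identification of $\sum_J \chi_{-y}(E_J^o)(1-y)^{|J|-1}$ with $\chi_{-y}$ of the general fiber.
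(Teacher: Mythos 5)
Your proposal lands, in its fallback form, on essentially the same endgame as the paper -- reduce by additivity to the smooth proper case, take a strictly semi-stable model, compute the motivic volume by Corollary \ref{cor:sstablecor}, and then match $\sum_{\emptyset\neq J}\chi_{-y}(E_J^o)(1-y)^{|J|-1}$ against $\chi_{-y}(X)$ -- but the packaging is genuinely different. The paper never leaves the given field: it observes that $X$ is defined over some henselian discretely valued subfield $K'$ of $K$, takes the semi-stable model over its valuation ring, and proves the key identity $\chi_{-y}(X)=\sum_J\chi_{-y}(E_J^o)(1-y)^{|J|-1}$ by the weight spectral sequence in algebraic log Hodge theory, with the equality of Hodge-flag dimensions between the log special fiber and the generic fiber supplied by the cited result of Illusie--Kato--Nakayama. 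You instead route through Corollary \ref{cor:compar} and the Denef--Loeser motivic nearby fiber and quote the compatibility of its Hodge realization with limit mixed Hodge structures. What your route buys is a direct link to the existing nearby-fiber literature; what the paper's route buys is that it works verbatim over an arbitrary henselian discretely valued subfield (arbitrary characteristic-zero residue field), with no completion or Lefschetz-principle gymnastics and no appeal to the analytic theory.

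Two caveats you should repair. First, your reduction step is misstated: $K$ is \emph{algebraic} over the henselian discretely valued field, so it typically does not contain a subfield isomorphic to $\overline{k(\!(t)\!)}$ (it need not even contain $k(\!(t)\!)$, e.g.\ for cardinality reasons); to make your reduction honest you must first pass to the completion $\widehat{K}$ via Proposition \ref{prop:complete} (which does contain $k(\!(t)\!)$ once a coefficient field and uniformizer are chosen), or simply drop the reduction as the paper does. Second, the ``well-known'' input that $\chi_{-y}$ of the motivic nearby fiber equals $\chi_{-y}$ of the (geometric) generic fiber is, through Corollary \ref{cor:compar}, essentially equivalent to the proposition you are proving, so you must cite it from a genuinely independent source: either classical Steenbrink theory over $\CC$ (available after your reduction to algebraically closed $k$ of characteristic zero, via the Lefschetz principle, and with the usual care about compact supports for non-proper fibers, handled by additivity), or the algebraic log Hodge theory that the paper invokes. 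With those two points fixed, your term-by-term comparison on a semi-stable model is exactly the paper's computation; also note, as a minor slip, that $\chi_{-y}(\LL)=y$ is not a unit of $\ZZ[y]$, so the extension of $\chi_{-y}$ to $\mathcal{M}_k$ takes values in $\ZZ[y,y^{-1}]$.
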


\begin{proof}
By our assumption on $K$, the scheme $X$ is defined over some  henselian discretely valued subfield $K'$ of $K$.    We denote by $R'$ and $k'$ the valuation ring and residue field of $K'$, respectively.
 By additivity, we may assume that $X$ is smooth and proper over $K'$. Passing to a finite extension of $K'$ if necessary, we can moreover suppose that $X$ has a regular strictly semi-stable model $\cX$ over $R'$. Recall that this means that $\cX$ is regular flat proper $R'$-model and that its special fiber $\cX_{k'}$ is a reduced strict normal crossings divisor. Then writing $$\cX_{k'}=\sum_{i\in I}E_i,$$ we can use Corollary \ref{cor:sstablecor} to compute the motivic volume of $X$, and we find that
 $$\chi_{-y}^{\lim}(X(K))=\sum_{\emptyset \neq J\subset I}\chi_{-y}(E_J^o)(1-y)^{|J|-1}$$
(using the notation of Corollary \ref{cor:sstablecor}). We need to show that this expression is also equal to $\chi_{-y}(X)$.

 We will apply Hodge theory for logarithmic $R'$-schemes; see \cite[\S7]{IKN} and \cite[\S2.2]{SV}. The weight spectral sequence for de Rham cohomology of the model $\cX$ has $E_1$-sheet $$E_1^{pq}=\bigoplus_{i,i-p\geq 0}\bigoplus_{\stackrel{J\subset I}{ |J|=2i-p+1}} H^{q+2p - 2i}_{\mathrm{dR}}(E_J)(p-i)\Rightarrow H^{p+q}_{dR}(\cX^+_{k'}),$$ where $\cX^+_{k'}$ is the special fiber of $\cX$ with its induced log structure (this is a proper log smooth log scheme over the standard log point $(\Spec k')^+$). This weight spectral sequence degenerates at $E_2$, and it is compatible with the respective Hodge filtrations. The flags of the Hodge filtration on $H^{p+q}_{dR}(\cX^+_{k'})$ have the same dimension as those on $H^{p+q}_{dR}(X)$, by \cite[7.2]{IKN}. It follows that the $\chi_{-y}$-genus of $X$ is equal to $$\sum_{i,i-p\geq 0}\sum_{\stackrel{J\subset I}{ |J|=2i-p+1}}\chi_{-y}(E_J)(-1)^py^{i-p}=\sum_{\emptyset \neq J\subset I}\chi_{-y}(E_J^o)(1-y)^{|J|-1}$$ as required (here we used that the sets $E^o_{J'}$ with $J\subset J'$ form a partition of $E_J$).
\end{proof}

\begin{rem}
Our proof shows, in fact, that the Hodge--Deligne polynomial of $\Vol(X)$ is equal to the Hodge-Deligne polynomial of the limit mixed Hodge structure associated with $X$, but we will not need this property.
\end{rem}

\begin{defn}\label{def:chiy}
If $K$ is as in Proposition \ref{prop:limnolim} (for instance, $K=\Puis$) and $S$ is a semialgebraic set over $K$, we will write $\chi_{-y}(S)$ instead of $\chi^{\lim}_{-y}(S)$, and we simply call this invariant the $\chi_{-y}$-genus of $S$.
\end{defn}

Proposition \ref{prop:limnolim} guarantees that this definition does not lead to ambiguities.

\subsection{The Euler characteristic of a semialgebraic set}  \label{sec:Euler}

 To conclude this section, we establish some properties of the specialization of the motivic volume with respect to the Euler characteristic, and we compare it to the Euler characteristic of Berkovich's \'etale cohomology for nonarchimedean analytic spaces \cite{berk-etale}. We endow $K$ with the nonarchimedean absolute value given by $|x|=\exp(-\val(x))$ for every $x\in K^{*}$, and we denote by $\widehat{K}$ the completion of $K$. For every $K$-scheme of finite type $X$, we denote by $X^{\an}$ the $\widehat{K}$-analytic space associated with $X\times_K \widehat{K}$.

 If $Y$ is a $K$-analytic space and $T$ is a subset of $Y$, then the {\em germ} $(Y,T)$ of $Y$ at $T$ is defined in \cite[\S3.4]{berk-etale}.  If $f:Y'\to Y$ is an isomorphism from $Y'$ onto an open subspace of $Y$ containing $T$, and $T'=f^{-1}(T)$, then the morphism of germs $(Y',T')\to (Y,T)$ induced by $f$ is declared to be an isomorphism in the category of germs. The \'etale topology on a germ $(Y,T)$ is defined in \cite[\S4.2]{berk-etale}.  We say that $T$ is an analytic subspace of $Y$ if there exist a $K$-analytic space $Z$ and a morphism $f:Z\to Y$ such that $f$ is a homeomorphism onto its image, $f(Z)=T$, and the induced morphism of residue fields $\mathscr{H}(f(z))\to \mathscr{H}(z)$ is an isomorphism for every $z\in Z$. Such a morphism is called a quasi-immersion \cite[4.3.3]{berk-etale}, and it induces an equivalence between the \'etale topoi of the germ $(Y,T)$ and the $K$-analytic space $Z$ by \cite[4.3.4]{berk-etale}. Thus $(Y,T)$ and $Z$ have the same \'etale cohomology spaces.

 Now let $X$ be a $K$-scheme of finite type. To every semialgebraic subset $S$ of $X$, one can attach a subset $S^{\an}$ of $X^{\an}$ that is defined by the same formulas as $S$; see \S5.2 in \cite{HL15}. Subsets of $X^{\an}$ of this form will again be called semialgebraic. If $S^{\an}$ is locally closed in $X^{\an}$, we will call $S$ a locally closed semialgebraic subset of $X$. Then the germ $(X^{\an},S^{\an})$ has finite dimensional $\ell$-adic cohomology spaces with compact supports concentrated in degrees $\leq 2~\mathrm{dim}(X)$, by \cite[5.14]{Mar14}, so that we can consider its $\ell$-adic  Euler characteristic $$\eu(S^{\an})=\sum_{i\geq 0}(-1)^i\mathrm{dim}\,H^i_c((X^{\an},S^{\an})_{\mathrm{\acute{e}t}},\QQ_\ell).$$ It follows from Proposition~5.2.2 in \cite{HL15} that $\eu(S^{\an})$ only depends on
 the isomorphism class of the semialgebraic set $S$, which justifies the omission of the ambient space $X$ from the notation.

For every field $F$, there exists a unique ring morphism $$\eu:K_0(\Var_F)\to \ZZ$$ that sends the class $[Y]$ of every $F$-scheme of finite type $Y$ to $\eu(Y)$, the $\ell$-adic Euler characteristic with compact supports of $Y$, where $\ell$ is any prime number invertible in $F$. If $F$ has characteristic zero, then $\eu(Y)$ is the value of $\chi_{-y}(Y)$ at $y=1$. If $F$ is a subfield of $\CC$, then $\eu(Y)$ is equal to the singular Euler characteristic with compact supports of $Y(\CC)$ with respect to its complex analytic topology.

\begin{prop}\label{prop:analytic}
Let $X$ be a $K$-scheme of finite type and let $S$ be a locally closed semialgebraic subset of $X$.  Then $\eu(\Vol(S))=\eu(S^{\an})$.
\end{prop}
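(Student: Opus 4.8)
The plan is to show that both $S\mapsto\eu(\Vol(S))$ and $S\mapsto\eu(S^{\an})$ are additive ring homomorphisms $K_0(\VF_K)\to\ZZ$, and then to compare them on a set of ring generators. The first has these properties by the construction of $\Vol$ in \S\ref{sec:KVF}. For the second, additivity over closed/open decompositions of \emph{locally closed} semialgebraic sets follows from the long exact sequence in $\ell$-adic cohomology with compact supports of germs, using the finiteness statement \cite[5.14]{Mar14}; invariance under isomorphism in $\VF_K$ is \cite[5.2.2]{HL15}; partitioning an arbitrary semialgebraic set into finitely many locally closed pieces (and checking independence of the partition) extends it to an additive function on all of $K_0(\VF_K)$, and the K\"unneth formula for germs makes it multiplicative. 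By the Hrushovski--Kazhdan presentation of $K_0(\VF_K)$ recalled in \S\ref{sec:KVF}, it then suffices to verify the identity on the generators $[\spe_{\cX}^{-1}(Y)]$, with $\cX$ smooth of finite type over $R$ and $Y$ locally closed in $\cX_k$, and $[\trop^{-1}(\gamma)]$, with $\gamma$ a $G$-rational polyhedron in $G^n$.

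For a generator of the first type, Proposition \ref{prop:smoothvol} gives $\Vol(\spe_{\cX}^{-1}(Y))=[Y]$, so the left-hand side equals $\eu(Y)$. On the right-hand side, $(\spe_{\cX}^{-1}(Y))^{\an}$ is the analytic tube over $Y$ inside $\cX_K^{\an}$, and since the complex of nearby cycles attached to the smooth $R$-scheme $\cX$ is trivial, this tube has the same $\ell$-adic cohomology with compact supports as $Y$; thus $\eu((\spe_{\cX}^{-1}(Y))^{\an})=\eu(Y)$. Concretely one reduces, via the \'etale-local structure of $\cX$ over $R$, to the case $\cX=\AA^d_R$, where the tube over $Y\subset\AA^d_k$ is cohomologically a family of open unit polydiscs over $Y$, each of Euler characteristic $1$ because closed polydiscs are cohomologically trivial; see \cite{berk-etale, Mar14}.

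For a generator of the second type, Corollary \ref{cor:tropvol} applied to the sch\"on variety $X=T=\GG_{m,K}^n$, for which $\mathrm{in}_w(T)=\GG_{m,k}^n$, gives $\Vol(\trop^{-1}(\gamma))=\chi'(\gamma)(\LL-1)^n$, so $\eu(\Vol(\trop^{-1}(\gamma)))=\chi'(\gamma)(\eu(\LL)-1)^n$, which is $\chi'(\gamma)$ if $n=0$ and $0$ if $n\geq1$. The case $n=0$ is trivial; assume $n\geq1$, so that we must prove $\eu(\trop^{-1}(\gamma)^{\an})=0$. A preliminary computation: applying Mayer--Vietoris to the standard coverings of $\PP^{1,\an}_K$ by two closed discs and of $\GG_{m,K}^{1,\an}$ by two punctured open discs, and using $\eu(\PP^1_K)=2$, $\eu(\GG_{m,K})=0$ (Berkovich's comparison theorem) and the cohomological triviality of closed discs, shows that the affinoid torus $\{\val(x)=0\}$, the punctured open disc $\{\val(x)>0\}$, and any open annulus $\{a<\val(x)<b\}$ all have Euler characteristic $0$. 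Now decompose $\gamma$ into relatively open polyhedral cells and, by additivity, take $\gamma=\mathring\sigma$: if $\sigma$ spans a proper affine subspace, an integral change of coordinates moves $\mathring\sigma$ into $G^{d}\times\{c\}$ with $d<n$, so $\trop^{-1}(\mathring\sigma)$ acquires a direct factor $\{\val(x_1)=\dots=\val(x_{n-d})=0\}$ and $\eu=0$ by K\"unneth; if $\rec(\sigma)$ contains a line, a monomial change of coordinates produces a direct factor $\GG_{m,K}(K)$, and $\eu(\GG_{m,K}^{\an})=\eu(\GG_{m,K})=0$ gives $\eu=0$; if $\sigma$ is full-dimensional, bounded and pointed, then by the Hrushovski--Kazhdan computation of $K_0(G[n])$ \cite{HK06} (a bounded definable set has class a multiple of the class of a point) the class $[\trop^{-1}(\mathring\sigma)]$ is $\chi'(\mathring\sigma)$ times the class of the affinoid torus, so $\eu=0$; and if $\sigma$ is full-dimensional, unbounded and pointed, then after subdividing $\sigma$ so that $\rec(\sigma)$ becomes unimodular, projection along a primitive generator of $\rec(\sigma)$ followed by a monomial change of coordinates produces a direct factor $\{\val(x)>0\}$, so again $\eu=0$. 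This completes the comparison on both families of generators, and hence the proof.

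The crux is the comparison carried out in the second paragraph: one must know that replacing a $k$-scheme $Y$ by the analytic tube over $Y$ in a smooth $R$-model leaves $\ell$-adic cohomology with compact supports unchanged. This is the cohomological form of the triviality of nearby cycles for smooth morphisms, and it is exactly where the foundational results on $\ell$-adic cohomology of analytic germs \cite{berk-etale, Mar14} genuinely enter. The polyhedral reductions in the third paragraph are elementary but require some care with the lattice geometry of recession cones, in particular with passing to a unimodular refinement so that the ``floor'' functions occurring in the projections have integral slopes.
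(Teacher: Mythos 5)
Your overall architecture is the same as the paper's: view both $S\mapsto\eu(\Vol(S))$ and $S\mapsto\eu(S^{\an})$ as ring morphisms on $K_0(\VF_K)$ and compare them on the Hrushovski--Kazhdan generators. The paper does exactly this, except that it first reduces to complete $K$ via Proposition \ref{prop:complete} (a step you omit, and which is needed because the results of \cite{HL15} you invoke, including the existence of the morphism $\varepsilon:[S]\mapsto\eu(S^{\an})$ in \cite[5.2.2]{HL15}, are stated over complete fields), and it disposes of the two families of generators by citation: \cite[Lemma~5.4.3]{HL15} (Berkovich's nearby cycles for formal schemes) for the generators $\spe_{\cX}^{-1}(Y)$ with $\cX$ smooth over $R$ --- this is exactly the content of your second paragraph --- and \cite[Lemma~5.4.2]{HL15} for the vanishing of $\eu((\trop^{-1}\gamma)^{\an})$.

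The genuine gap is in your replacement of that last citation by a hands-on cell analysis. First, in the bounded full-dimensional case you use that an arbitrary bounded definable subset $S$ of $G^n$ satisfies $[S]_n=\chi(S)\,[1]_n$ in $K_0(G[n])$. This is much stronger than Proposition \ref{prop:scale}, which treats only the standard open simplices; a general $G$-rational simplex is not $\mathrm{GL}_n(\ZZ)\ltimes G^n$-equivalent to a standard one (the allowed morphisms have integral linear part), so this needs either a precise reference in \cite{HK06} or an actual argument. Second, and more seriously, in the unbounded pointed case the claimed ``direct factor $\{\val(x)>0\}$'' does not exist as stated: writing the cell as $\{(x',t):x'\in Q,\ t>f(x')\}$ with $e_n$ a ray of the recession cone, the shear that removes the floor $f$ is integral-affine only when $f$ has integral slopes, and $f$ is in general only piecewise $\QQ$-affine. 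Subdividing so that the recession cone becomes unimodular --- your proposed remedy --- does not change the slopes of the bounded faces, so it does not address the problem; subdividing the base $Q$ does not help either, since slopes are unchanged on the pieces. The step can be repaired (for instance by working entirely inside $K_0(G[\ast])$ with the Hrushovski--Kazhdan description of that ring, which reduces everything to products of points and rays, or simply by citing \cite[Lemma~5.4.2]{HL15} as the paper does), but as written it fails.
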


\begin{proof}
By Proposition \ref{prop:complete}, we may assume that $K$ is complete. By  \cite[Proposition~5.2.2]{HL15}, there exists a unique ring morphism
    $$\varepsilon:K_0(\VF_K)\to \ZZ$$
that maps $[S]$ to $\eu(S^{\an})$ for every locally closed semialgebraic set $S$. We will show that $\varepsilon=\eu\circ \Vol$. This equality can be tested on the elements of $K_0(\VF_K)$ of the form $\Theta([\gamma]_n)$,  with $\gamma$ a closed polyhedron in $G^n$, and $\Theta([X]_n)$, with $X$ a  subscheme of the special fiber of a smooth $R$-scheme $\cX$ of relative dimension $n$. Indeed, these elements generate the Grothendieck ring $K_0(\VF_K)$ by the results in \S\ref{sec:KVF}.

Since the Euler characteristic of the torus $\mathbb{G}_{m,k}$ vanishes,
   $$(\eu\circ \Vol)(\Theta([\gamma]_n))=\eu((\LL-1)^n)=0$$
whenever $n>0$. On the other hand, since $\trop^{-1}(\gamma)$ is a closed semialgebraic subset of $K^n$, we have
   $$\varepsilon(\Theta([\gamma]_n))=\eu((\trop^{-1}(\gamma))^{\an})$$
which also vanishes by Lemma 5.4.2 in \cite{HL15}. As for $\Theta([X]_n)$, we have
  $$(\eu\circ \Vol)(\Theta([X]_n))=\eu(X)$$
by the definition of the motivic volume, and
  $$\varepsilon(\Theta([X]_n))=\eu((\spe^{-1}_{\cX}(X))^{\an}).$$
The $K$-analytic space $(\spe^{-1}_{\cX}(X))^{\an}$ is precisely the inverse image of $X$ under the specialization map $\widehat{\cX}_K\to \widehat{\cX}$, where $\widehat{\cX}$ denotes the formal completion of $\cX$ and $\widehat{\cX}_K$ denotes its generic fiber in the category of $K$-analytic spaces. The equality
   $$\eu(X)=\eu((\spe^{-1}_{\cX}(X))^{\an}) $$
 now follows from Berkovich's theory of nearby cycles for formal schemes: see \cite[Lemma~5.4.3]{HL15}.
 \end{proof}

\begin{cor}\label{cor:eu}
If $X$ is a $K$-scheme of finite type, then
 $$\eu(\Vol(X))=\eu(X).$$
\end{cor}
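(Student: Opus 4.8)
The plan is to read this off directly from Proposition \ref{prop:analytic}, applied to the ``full'' semialgebraic set $S = X(K)$, combined with Berkovich's comparison theorem between the $\ell$-adic cohomology of a scheme of finite type and that of its analytification. First I would observe that $X(K)$, viewed as a semialgebraic subset of $X$, is closed (it is cut out by no semialgebraic conditions at all), hence locally closed, and that its analytification $X(K)^{\an}$ is all of $X^{\an}$. Consequently the germ $(X^{\an},X(K)^{\an})$ is simply $X^{\an}$, and $\eu(X(K)^{\an})$ in the sense of \S\ref{sec:Euler} is the ordinary $\ell$-adic Euler characteristic with compact supports of $X^{\an}$, which is finite by \cite[5.14]{Mar14}. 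Proposition \ref{prop:analytic} then yields $\eu(\Vol(X)) = \eu(\Vol(X(K))) = \eu(X^{\an})$.

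It remains to identify $\eu(X^{\an})$ with $\eu(X)$. For this I would invoke Berkovich's comparison theorem for étale cohomology with compact supports \cite{berk-etale}: for a scheme of finite type over a nonarchimedean field and finite coefficients there is a canonical isomorphism between the compactly supported étale cohomology of the scheme and that of its analytification, and passing to the inverse limit one gets the same statement with $\QQ_\ell$-coefficients. Applying this over the complete field $\widehat{K}$ to the base change $X\times_K \widehat{K}$, whose analytification is by definition $X^{\an}$, gives $\eu(X^{\an}) = \eu(X\times_K \widehat{K})$. Finally, since $\widehat{K}$ is an algebraically closed valued field extension of the algebraically closed field $K$ (the completion of an algebraically closed rank one valued field is again algebraically closed), $\ell$-adic cohomology with compact supports is insensitive to the base change from $K$ to $\widehat{K}$, so $\eu(X\times_K \widehat{K}) = \eu(X)$. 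Chaining the three equalities proves the corollary.

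There is essentially no obstacle specific to this statement: every step is formal once Proposition \ref{prop:analytic} is available, and the only substantive ingredient is Berkovich's comparison theorem, which is already in the literature. If one wished to avoid citing it separately, one could instead re-run the argument in the proof of Proposition \ref{prop:analytic}, testing the identity $\eu\circ\Vol = \eu(\,\cdot\,^{\an})$ on the generators $\Theta([\gamma]_n)$ and $\Theta([X]_n)$ of $K_0(\VF_K)$ and then evaluating at the class $[X(K)]$; but routing through Proposition \ref{prop:analytic} is the cleanest presentation.
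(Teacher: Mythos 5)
Your proposal is correct and follows essentially the same route as the paper: apply Proposition \ref{prop:analytic} to the locally closed semialgebraic set $S = X(K)$, whose analytification is all of $X^{\an}$, and then invoke Berkovich's comparison theorem for \'etale cohomology with compact supports \cite[7.1.1]{berk-etale}. Your added remark on invariance under the base change $K \to \widehat{K}$ is a harmless elaboration of what the paper leaves implicit.
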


\begin{proof}
This follows at once from Proposition \ref{prop:analytic} and the comparison theorem for \'etale cohomology with compact supports for analytifications of $\widehat{K}$-schemes of finite type \cite[7.1.1]{berk-etale}.
\end{proof}

Thanks to Corollary \ref{cor:eu}, the following definition is  unambiguous.

\begin{defn}\label{def:eu}
We define the Euler characteristic of a semialgebraic subset $S$ over $K$  by
 $$\eu(S)=\eu(\Vol(S)).$$
\end{defn}

Using the comparison result in Proposition \ref{prop:analytic}, we can show that the Euler characteristic of a semialgebraic set satisfies some of the standard cohomological properties. In particular, we will prove that the Euler characteristic of a proper family over a semialgebraic base can be computed by integrating the Euler characteristics of the fibers over the base (Corollary \ref{cor:multiplicative}). This will be essential for the applications in Section \ref{sec:geominterpr}.

 \begin{prop}\label{prop:eulisse}
 Assume that $K=\Puis$. Let $X$ be a $K$-scheme of finite type, and let $\mathscr{F}$ be a constructible sheaf of $\mathbb{F}_\ell$-vector spaces on $X$.  Let $S$ be a locally closed semialgebraic subset of $X$.  We denote the pullback of $\mathscr{F}$ to the germ $(X^{\an},S^{\an})$ again by $\mathscr{F}$. Then the \'etale cohomology spaces
   $$H^i_c((X^{\an},S^{\an})_{\mathrm{\acute{e}t}},\mathscr{F})$$
are finite dimensional for all $i\geq 0$, and vanish for $i>2 \mathrm{dim}(X)$. Moreover, if $\mathscr{F}$ is lisse of rank $n$ on $X$, then
\begin{equation}\label{eq:lisse}
\sum_{i\geq 0}(-1)^i \mathrm{dim}\,H^i_c((X^{\an},S^{\an})_{\mathrm{\acute{e}t}},\mathscr{F})=n \cdot \eu(S^{\an}).\end{equation}
\end{prop}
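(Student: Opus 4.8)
The plan is to reduce both assertions to the case of the constant sheaf $\underline{\mathbb{F}_\ell}$, for which the finiteness and vanishing statement is \cite[5.14]{Mar14} and the identity \eqref{eq:lisse} is contained in Proposition~\ref{prop:analytic}, by means of Berkovich's theory of nearby cycles for formal schemes --- in the same spirit as the proof of Proposition~\ref{prop:analytic}. The inputs beyond the constant--coefficient case are (i) the dictionary between locally closed semialgebraic sets and finite disjoint unions of tubes $\spe^{-1}_{\cX}(Y)$, where $\cX$ is a flat $R$-model of an open subscheme of $X$ and $Y\subseteq \cX_k$ is locally closed, and (ii) for \eqref{eq:lisse}, the fact that in residue characteristic zero there is no wild ramification, so that the Euler characteristic of the nearby-cycles complex of a lisse sheaf on $\cX_K$ equals its rank times that of the nearby-cycles complex of $\underline{\mathbb{F}_\ell}$.

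Concretely, I would first use the additivity of compactly supported \'etale cohomology (excision long exact sequences) together with the quasi-immersion equivalence recalled before Proposition~\ref{prop:analytic} to reduce to the case $S^{\an}=(\spe^{-1}_{\cX}(Y))^{\an}$, a tube inside $\widehat{\cX}_K$: one partitions $S$ into finitely many semialgebraic pieces and, using semistable-type models over $R$ (obtained via resolution of singularities) to realise the unbounded, polyhedral directions as tubes as well, writes each piece in this form; a Noetherian induction on $\dim X$ handles the passage between $X$, its open subschemes, and their $R$-models. The restriction of $\mathscr F$ to any such open subscheme is again constructible, and lisse of rank $n$ when $\mathscr F$ is. For the finiteness and vanishing statement it then suffices to observe that $H^i_c((X^{\an},S^{\an}),\mathscr F)=H^i_c((\spe^{-1}_{\cX}(Y))^{\an},\mathscr F)$ is the compactly supported cohomology of a semialgebraic subspace of $\widehat{\cX}_K$, which is finite dimensional in each degree and vanishes for $i>2\dim X$ by \cite[5.14]{Mar14} applied with the constructible sheaf $\mathscr F$ (or deduced from the constant--coefficient case by the same d\'evissage).

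For \eqref{eq:lisse} I would invoke Berkovich's nearby cycles: for each piece there is an identification
\[
H^\ast_c\bigl((\spe^{-1}_{\cX}(Y))^{\an},\mathscr F\bigr)\;\cong\;H^\ast_c\bigl(Y,(R\Psi_{\widehat{\cX}}\mathscr F)|_Y\bigr),
\]
where $\mathscr F$ is now regarded as a lisse sheaf of rank $n$ on $\cX_K$; this is the statement underlying \cite[Lemma~5.4.3]{HL15} used in the proof of Proposition~\ref{prop:analytic}. Since $R$ has residue characteristic zero, reducing to a strict normal crossings model via resolution of singularities and using the explicit description of (tame) nearby cycles --- or, equivalently, the Grothendieck--Ogg--Shafarevich / Deligne--Laumon Euler characteristic formula with vanishing Swan term --- gives $\chi_c(Y,R\Psi_{\widehat{\cX}}\mathscr F)=n\cdot\chi_c(Y,R\Psi_{\widehat{\cX}}\underline{\mathbb{F}_\ell})$. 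By the constant--coefficient case, $\chi_c(Y,R\Psi_{\widehat{\cX}}\underline{\mathbb{F}_\ell})=\eu\bigl((\spe^{-1}_{\cX}(Y))^{\an}\bigr)$, so summing over the pieces and applying Proposition~\ref{prop:analytic} yields $\sum_{i\ge 0}(-1)^i\dim H^i_c((X^{\an},S^{\an}),\mathscr F)=n\cdot\eu(S^{\an})$, as desired.

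The main obstacle I anticipate is input (ii): the rank-multiplicativity of the Euler characteristic of nearby cycles of a lisse sheaf. An alternative route is to pass to a finite \'etale Galois cover $f\colon X'\to X$ trivialising $\mathscr F$ and argue via the multiplicativity of the Euler characteristic of semialgebraic sets under finite \'etale covers, together with a Lefschetz-type descent from $X'$ to $X$; but that multiplicativity is essentially equivalent to (ii), so either way this is the genuine content of the proposition. A more routine --- though still not entirely formal --- point is to carry out the reduction (i) compatibly with the restriction of $\mathscr F$, in particular to handle the polyhedral directions and to keep the bookkeeping of open subschemes and their models under control.
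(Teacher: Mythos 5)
Your plan reduces \eqref{eq:lisse} to the assertion that the Euler characteristic of the nearby-cycles complex of a lisse sheaf is its rank times that of the constant sheaf, and you yourself flag this input (ii) as ``the genuine content of the proposition'' --- which is exactly the problem: the proposal reduces the statement to an equivalent unproven one rather than proving it. In dimension $>1$ there is no quotable Grothendieck--Ogg--Shafarevich/Deligne--Laumon theorem for Berkovich vanishing cycles with lisse coefficients, and the natural stalkwise argument is circular: the stalk of $R\Psi\mathscr{F}$ at a point of the special fiber is the cohomology of the tube over that point with coefficients in the (generally nontrivial) restriction of $\mathscr{F}$, and the rank-multiplicativity of its Euler characteristic is precisely an instance of \eqref{eq:lisse} for that tube, which is itself a locally closed semialgebraic set. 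The ``explicit description of tame nearby cycles on an snc model'' only helps for the constant sheaf; once $\mathscr{F}$ has monodromy on the poly-annular tubes over the deeper strata you are back to the same question. There is also a secondary gap in reduction (i): unlike Proposition \ref{prop:analytic}, where both sides factor through $K_0(\VF_K)$ and one may check the identity on the generators $\Theta([\gamma]_n)$ and $\Theta([X]_n)$, the left-hand side of \eqref{eq:lisse} is not a function of the class $[S]$ --- the Grothendieck-ring relations are generated by arbitrary semialgebraic bijections, which do not respect $\mathscr{F}$ --- so you need an honest partition of the given $S$ into tubes inside $R$-models of open subschemes of $X$ itself, a nontrivial (Raynaud-type) argument you only sketch; similarly \cite[5.14]{Mar14} is not stated with $\mathscr{F}$-coefficients, so the finiteness part needs the ``same proof'' adaptation rather than a citation.

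For contrast, the paper supplies the missing content by adapting Deligne's trick from \cite[2.7]{illusie} to germs of analytic spaces: choose a finite Galois cover $Y\to X$ with group $G$ trivializing $\mathscr{F}$, let $T$ be the preimage of $S$, write
$$R\Gamma_c((X^{\an},S^{\an})_{\mathrm{\acute{e}t}},\mathscr{F})\cong R\Gamma^G\bigl(R\Gamma_c((Y^{\an},T^{\an})_{\mathrm{\acute{e}t}},\mathbb{F}_\ell)\otimes_{\mathbb{F}_\ell}\mathscr{F}_x\bigr),$$
and reduce \eqref{eq:lisse} to the vanishing of the trace of every $g\neq 1$ on $R\Gamma_c((Y^{\an},T^{\an})_{\mathrm{\acute{e}t}},\QQ_\ell)$; that vanishing is proved by showing the trace is an integer independent of $\ell$, via a normal compactification, Berkovich's comparison theorems over finite extensions of $\CC(\!(t)\!)$ \cite[7.1.1, 7.1.2]{berk-complex}, and the semialgebraic induction of \cite{Mar14}. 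Your alternative route through a trivializing cover (``multiplicativity under finite \'etale covers plus Lefschetz-type descent'') is the right direction, but you stop short of it; the trace-vanishing/integrality argument is exactly the piece your proposal is missing, and without it (or some substitute such as a spreading-out to $\CC(\!(t)\!)$ and comparison with complex nearby cycles, carried out in detail) the proof is incomplete.
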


\begin{proof}
The finiteness and vanishing of the cohomology spaces can be proven in exactly the same way as Theorem 5.14 in \cite{Mar14}, using Berkovich's finiteness result in \cite[1.1.1]{berk-finite}. So let us assume that $\mathscr{F}$ is lisse of rank $n$ on $X$, and prove Equation \ref{eq:lisse}. If $S$ is a subscheme of $X$, this equality was proven by Deligne for algebraic \'etale cohomology \cite[2.7]{illusie}.  We will adapt the proof of \cite[2.7]{illusie} to $K$-analytic spaces.

To start with, we observe that
     $$\eu(S^{\an})=\sum_{i\geq 0}(-1)^i\mathrm{dim}\,H^i_c((X^{\an},S^{\an})_{\mathrm{\acute{e}t}},\mathbb{F}_\ell)$$
because the complex $R\Gamma_c((X^{\an},S^{\an})_{\mathrm{\acute{e}t}},\ZZ_\ell)$ is perfect, by \cite[5.10]{Mar14}. Now let us go through the different steps of the proof of \cite[2.1]{illusie} (from which \cite[2.7]{illusie} immediately follows), and check that they apply to our set-up, as well. Let $Y\to X$ be a connected finite Galois covering with Galois group $G$ such that the pullback of  $\mathscr{F}$ to $Y$ is trivial. We denote by $T$ the inverse image of $S$ in $Y$; this is a locally closed semialgebraic subset of $Y$. The morphism of germs $f:(Y^{\an},T^{\an})\to (X^{\an},S^{\an})$ is still a Galois cover with Galois group $G$.

The complex of $\mathbb{F}_\ell[G]$-modules    $$R\Gamma_c((Y^{\an},T^{\an})_{\mathrm{\acute{e}t}},\mathbb{F}_\ell)\cong
R\Gamma_c((X^{\an},S^{\an})_{\mathrm{\acute{e}t}},f_*\mathbb{F}_\ell)$$ is perfect  (the proof of \cite[5.3.10]{berk-etale} also applies to $\mathbb{F}_\ell[G]$-coefficients, so that we can use the same arguments as in \cite[5.10]{Mar14}). Moreover,
    $$R\Gamma_c((X^{\an},S^{\an})_{\mathrm{\acute{e}t}},\mathscr{F})\cong
    R\Gamma^G(R\Gamma_c((Y^{\an},T^{\an})_{\mathrm{\acute{e}t}},\mathbb{F}_\ell)\otimes_{\mathbb{F}_\ell}\mathscr{F}_x)$$
where $x$ is any point of $S(K)$ and $G$ acts diagonally on the tensor product in the right hand side (by the same arguments as in \cite{illusie}). Thus we can use formula (2.3.1) in \cite{illusie} to compute the left hand side of \eqref{eq:lisse}. Now it suffices to show that, for every element $g\neq 1$ in $G$, the trace of $g$ on $R\Gamma_c((Y^{\an},T^{\an})_{\mathrm{\acute{e}t}},\QQ_\ell)$ vanishes. This is automatic when the order of $g$ in $G$ is divisible by $\ell$, by \cite[III.3.2]{serre}. Hence, it is enough to prove that the trace of $g$ lies in $\ZZ$ and is independent of $\ell$.

By an additivity argument, we may assume that $X$ is normal. Let $\overline{X}$ be a normal compactification of $X$ and let $\overline{Y}$ be the integral closure of $\overline{X}$ in $Y$; this is a ramified Galois cover with Galois group $G$. We will prove the following more general claim: let $U$ be a locally closed semialgebraic subset of $\overline{X}$, and denote by $V$ its inverse image in $\overline{Y}$. Then for every element $g$ of $G$, the trace of $g$ on $R\Gamma_c((\overline{Y}^{\an},V^{\an})_{\mathrm{\acute{e}t}}, \QQ_\ell)$ lies in $\ZZ$, and it is independent of $\ell$.
If $U^{\an}$ is a compact analytic subspace of $\overline{X}^{\an}$ defined over some finite extension of $\CC(\!(t)\!)$, then $V^{\an}$ has the same properties with respect to $Y^{\an}$ by finiteness of the morphism $\overline{Y}\to \overline{X}$, and the result is a direct consequence of Berkovich's theory of \'etale cohomology with $\ZZ$-coefficients; see Theorem 7.1.1 and Corollary 7.1.2 in \cite{berk-complex}.  Now the general case follows from the same induction argument as in Lemmas 3.1 and 3.2 and Proposition 4.1 in \cite{Mar14}, using the additivity of the trace with respect to semialgebraic decompositions in $\overline{X}$. More precisely, the proofs of Lemmas 3.1 and 3.2 show that the property holds whenever $U^{\an}$ is contained in an affinoid domain inside the analytification of an affine open subscheme of $\overline{X}$, and then the proof of Proposition 4.1 yields the general result.
\end{proof}

 \begin{cor}\label{cor:multiplicative}
 Assume that $K=\Puis$. Let $f:Y\to X$ be a morphism of $K$-schemes of finite type. Let $\chi$ be an integer and let $S$ be a semialgebraic subset of $X$ such that $\eu(f^{-1}(s))=\chi$ for every $s$ in $S$. Then
$$\eu(f^{-1}(S))=\eu(S)\cdot \chi.$$
\end{cor}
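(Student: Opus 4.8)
The plan is to run Deligne's argument for the multiplicativity of the Euler characteristic in a fibration (compare \cite[2.1 and 2.7]{illusie}), with ordinary $\ell$-adic \'etale cohomology replaced by the cohomology of germs of $\widehat{K}$-analytic spaces that underlies Proposition \ref{prop:eulisse}. The three ingredients are: the additivity of $\eu=\eu\circ\Vol$ over finite semialgebraic partitions; a Leray spectral sequence for compactly supported cohomology of the induced morphism of germs; and Proposition \ref{prop:eulisse}, which plays the role of the statement that a lisse sheaf of rank $n$ contributes $n$ times the Euler characteristic.

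First, reductions. Inverse images of semialgebraic sets under morphisms in $\VF_K$ are semialgebraic, and any semialgebraic subset of $X$ admits a finite partition into locally closed semialgebraic subsets; since $\Vol$ is additive and $f^{-1}$ is compatible with such partitions, it suffices to treat locally closed $S$. In that case $\eu(S)=\eu(S^{\an})$ and $\eu(f^{-1}(S))=\eu((f^{-1}(S))^{\an})$ by Proposition \ref{prop:analytic}, where $(f^{-1}(S))^{\an}=(f^{\an})^{-1}(S^{\an})$. The sheaves $\cF_q:=R^qf_!\,\mathbb{F}_\ell$ on $X$ are constructible and vanish for $q$ outside a bounded range, so by Noetherian induction on $X$ — writing $X=U\sqcup Z$ with $U$ a dense open over which every $\cF_q$ is lisse and $Z$ closed of smaller dimension, partitioning $S=(S\cap U)\sqcup(S\cap Z)$, handling the $Z$-term by the inductive hypothesis applied to $Y\times_X Z\to Z$ (whose fibers over $S\cap Z$ still have Euler characteristic $\chi$, using base change for $Rf_!$ along the closed immersion) — we may assume that every $\cF_q$ is lisse on $X$, say of rank $n_q$.

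Now the Leray spectral sequence for compactly supported cohomology of the morphism of germs $(Y^{\an},(f^{-1}(S))^{\an})\to(X^{\an},S^{\an})$ induced by $f^{\an}$, together with the identification on $X^{\an}$ of $R^qf^{\an}_!\,\mathbb{F}_\ell$ with the analytification of $\cF_q$ (Berkovich's comparison, \cite{berk-etale}), yields
$$\eu\big((f^{-1}(S))^{\an}\big)=\sum_q(-1)^q\sum_p(-1)^p\dim H^p_c\big((X^{\an},S^{\an})_{\mathrm{\acute{e}t}},\cF_q\big)=\sum_q(-1)^q\,n_q\cdot\eu(S^{\an}),$$
the last equality being Proposition \ref{prop:eulisse} applied to each lisse $\cF_q$. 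If $S=\emptyset$ there is nothing to prove; otherwise choose a $K$-point $s\in S$. Since $K$ is algebraically closed, the stalk $(\cF_q)_s$ is $H^q_c(f^{-1}(s),\mathbb{F}_\ell)$, so $n_q=\dim H^q_c(f^{-1}(s),\mathbb{F}_\ell)$ and hence $\sum_q(-1)^q n_q=\eu(f^{-1}(s))=\chi$ by hypothesis. Combining, $\eu(f^{-1}(S))=\chi\cdot\eu(S^{\an})=\chi\cdot\eu(S)$.

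The main obstacle is the step used in the third paragraph: establishing the Leray spectral sequence for $\ell$-adic cohomology with compact supports of germs of non-archimedean analytic spaces, and checking that on the relevant germs the higher direct images with compact supports of $f^{\an}$ compute the analytifications of the algebraic sheaves $R^qf_!\,\mathbb{F}_\ell$. These facts are expected from Berkovich's formalism, but — exactly as in the proof of Proposition \ref{prop:eulisse} — they must be extracted with care from \cite{berk-etale, Mar14, HL15}; all the remaining steps are formal consequences of results already established above.
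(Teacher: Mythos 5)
Your argument is correct and is essentially the paper's own proof: partition $S$ into locally closed semialgebraic pieces, pass to Berkovich cohomology of germs via Proposition \ref{prop:analytic}, partition further so that the sheaves $R^qf_!\,\mathbb{F}_\ell$ become lisse over a subscheme containing $S$, and conclude with Proposition \ref{prop:eulisse} together with the compactly supported Leray spectral sequence for $f^{\an}$. The spectral sequence you flag as the main remaining obstacle is exactly what the paper invokes from \cite[5.2.2]{berk-etale}, so no further input is needed beyond that reference.
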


\begin{proof}
 By Lemma 5.2.1 in \cite{HL15}, the set $S$ has a finite partition into locally closed semialgebraic subsets (in \cite{HL15} it is assumed that the base field $K$ is complete, but the proof remains valid for $K=\Puis$).
  Thus, we may assume that $S^{\an}$ is locally closed.  We need to show that
\begin{equation}\label{eq:eumult}
\eu((f^{\an})^{-1}(S^{\an}))=\eu(S^{\an})\cdot \chi
\end{equation}
where $f^{\an}:Y^{\an}\to X^{\an}$ is the analytification of the morphism $f$. Since the sheaves $R^if_{!}(\mathbb{F}_\ell)$ are constructible on $X$, we may assume that they are lisse on $X$ for all $i\geq 0$, by further partitioning $S$ and replacing $X$ by a suitable subscheme containing $S$.  Now equation \eqref{eq:eumult} follows from Proposition \ref{prop:eulisse} and the Leray spectral sequence with compact supports for the morphism $f^{\an}$ (see \cite[5.2.2]{berk-etale}).
  \end{proof}

\begin{cor}\label{cor:eufibers}
Assume that $K=\Puis$.  Let $f:Y\to X$ be a morphism of $K$-schemes of finite type. Let $S$ be a semialgebraic subset of $X$ and let $S_0$ be a finite subset of $S$ such that $\eu(f^{-1}(s))=0$ for every $s$ in $S\smallsetminus S_0$. Then $$\eu(f^{-1}(S))=\sum_{s\in S_0}\eu(f^{-1}(s)).$$
\end{cor}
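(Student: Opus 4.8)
The plan is to deduce this from Corollary~\ref{cor:multiplicative} together with the additivity of the Euler characteristic of semialgebraic sets. First I would note that, since each point of $S$ is a $K$-point of $X$ and $S_0$ is finite, $S_0$ is a constructible --- hence semialgebraic --- subset of $X$ by the first item of Examples~\ref{exam:semialg}; consequently $S\smallsetminus S_0$ is semialgebraic as well, and $S=(S\smallsetminus S_0)\sqcup S_0$ is a finite semialgebraic partition. Pulling back along $f$ gives the semialgebraic partition $f^{-1}(S)=f^{-1}(S\smallsetminus S_0)\sqcup f^{-1}(S_0)$. Because $\Vol\colon K_0(\VF_K)\to K_0(\Var_k)$ is additive and $\eu\colon K_0(\Var_k)\to\ZZ$ is a ring morphism, the Euler characteristic $\eu(-)=\eu(\Vol(-))$ is additive on finite semialgebraic partitions, so
\[
\eu(f^{-1}(S))=\eu(f^{-1}(S\smallsetminus S_0))+\eu(f^{-1}(S_0)).
\]

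Next I would evaluate the two terms. For the first, every fiber of $f$ over a point of $S\smallsetminus S_0$ has Euler characteristic $0$ by hypothesis, so Corollary~\ref{cor:multiplicative}, applied with $\chi=0$, gives $\eu(f^{-1}(S\smallsetminus S_0))=\eu(S\smallsetminus S_0)\cdot 0=0$. For the second, $S_0$ is a finite disjoint union of $K$-points, so $f^{-1}(S_0)=\bigsqcup_{s\in S_0}f^{-1}(s)$ is again a finite semialgebraic partition, and additivity of $\eu$ yields $\eu(f^{-1}(S_0))=\sum_{s\in S_0}\eu(f^{-1}(s))$. Substituting both computations into the displayed identity gives the assertion.

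I do not expect any serious obstacle: the corollary is a formal consequence of Corollary~\ref{cor:multiplicative} and the additivity of the Euler characteristic of semialgebraic sets. The only point requiring a moment's care is that a finite set of $K$-points is semialgebraic, which legitimizes the decomposition $S=(S\smallsetminus S_0)\sqcup S_0$ as a semialgebraic partition; this is immediate from Examples~\ref{exam:semialg}.
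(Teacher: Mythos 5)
Your proof is correct and follows essentially the same route as the paper, whose proof is exactly the combination of Corollary~\ref{cor:multiplicative} (applied with $\chi=0$ to $S\smallsetminus S_0$) and additivity of the Euler characteristic on the finite semialgebraic partition $S=(S\smallsetminus S_0)\sqcup S_0$. Your extra remark that a finite set of $K$-points is constructible, hence semialgebraic, is a harmless and correct justification of that partition.
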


\begin{proof}
This follows from Corollary \ref{cor:multiplicative} and additivity of Euler characteristics.
\end{proof}

\section{A geometric interpretation of the refined tropical multiplicities}\label{sec:geominterpr}
\subsection{Main conjectures}\label{ss:conjb}
We recall the set-up of \S\ref{ss:conj}. Let $\Delta$ be a lattice polygon in $\RR^2$ with $n+1$ lattice points and $g$ interior lattice points. We denote by $(Y(\Delta),L(\Delta))$ the associated polarized toric surface over the field of Puiseux series $\Puis$. The complete linear series $|L(\Delta)|$ has dimension $n$, and its general member is a smooth projective curve of genus $g$. We fix an integer $\delta$ satisfying $0\leq \delta\leq g$. Let $S$ be a set of $n-\delta$ closed points in the dense torus in $Y(\Delta)$, and let $|L| \subset |L(\Delta)|$ be the linear series of curves passing through these points. We assume that the points in the tropicalization $\trop(S)\subset \RR^2$  lie in general position. We denote by $\cC\to |L|\cong \PP^{\delta}$ the universal curve of $|L|$. In \S\ref{ss:conj}, we have conjectured the following geometric interpretations of Block and G\"ottsche's refined tropical multiplicities.

\begin{conjtag}{\ref{conj:rational}}{
 Assume that $\delta=g$ and let $\Gamma\subset \RR^2$ be a rational tropical curve of degree $\Delta$ through the points of $\trop(S)$.
Then the Block--G\"ottsche refined tropical multiplicity $N(\Gamma)$ is equal to
$ y^{-g} \chi_{-y}(\oJac(\cC_\Gamma))$.}
\end{conjtag}

\begin{conjtag}{\ref{conj:chiydelta}}{
 For any value of $\delta$ in $\{0,\ldots,g\}$, let $\Gamma\subset \RR^2$ be a tropical curve of genus $g-\delta$ and degree $\Delta$ through the points of $\trop(S)$.
Then the Block--G\"ottsche refined tropical multiplicity $N(\Gamma)$ is equal to
$ y^{-\delta} N_{\delta}(\cC_\Gamma)$.}
\end{conjtag}

\medskip

\noindent In this section, we show that Conjecture \ref{conj:chiydelta} implies Conjecture \ref{conj:rational}, and that both conjectures are true after specializing from $\chi_{-y}$ to Euler characteristic and setting $y=1$.

Recall that the definition of $N(\Gamma)$ ensures that its evaluation at $y=1$ is the classical tropical multiplicity $n_{\Gamma}$ of the tropical curve $\Gamma$. Thus, we will prove that the classical tropical curve counting multiplicities are determined by the Euler characteristics of suitable semialgebraic sets in the relative compactified Jacobian, for rational curve counting, and in the relative Hilbert schemes of points, in general. One of the key ingredients in our proof is Corollary~\ref{cor:multiplicative}, which allows us to compute the Euler characteristic of a semialgebraic family of varieties by integrating with respect to Euler characteristic on the base. As a first step, we need to show that every curve in the semialgebraic family $|L|_\Gamma$ is integral.

\subsection{Integrality of curves in $|L|_\Gamma$}  \label{sec:integral}

Let $v_1, \ldots, v_{n- \delta}$ be points in $\trop(S)\subset \RR^2$ in general position.  We recall from \cite[\S4]{Mik05} that there are only finitely many parameterized tropical curves of genus $g - \delta$ of degree $\Delta$ through $v_1, \ldots, v_{n- \delta}$, and each of these tropical curves is simple, meaning that the parameterizing curve is trivalent, the parameterization is an immersion, the image has only trivalent and 4-valent vertices, and the preimage of each 4-valent vertex has exactly two points.  Furthermore, each unbounded edge has weight 1.

\begin{prop}  \label{prop:irreducible}
Let $\Gamma$ be one of the finitely many tropical curves of degree $\Delta$ and genus $g - \delta$ through $v_1, \ldots, v_{n-\delta}$.  Then every curve in $|L|_\Gamma$ is integral and contained in the smooth locus of $Y(\Delta)$.
\end{prop}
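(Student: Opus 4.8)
The plan is to rule out separately the three ways a curve $\overline X\in|L|_\Gamma$, defined by a Laurent polynomial $f$ with $\Trop(V(f))=\Gamma$, could fail to be integral and to lie in the smooth locus: that $\overline X$ meets a $T$-fixed point, that $\overline X$ is non-reduced, or that $\overline X$ is reducible. The first is immediate: since $\Gamma$ has degree $\Delta$, every vertex of $\Delta$ is a vertex of the Newton subdivision dual to $\Gamma$, so the coefficient of $x^{u_i}$ is non-zero for every vertex $u_i$ of $\Delta$; hence $\Delta$ is the Newton polygon of $f$, and by the discussion in \S\ref{sec:tropicalization} the closure $\overline X$ avoids all $T$-fixed points of $Y(\Delta)$. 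As $Y(\Delta)$ is a normal toric surface, its singular locus lies in the finite set of $T$-fixed points, so $\overline X$ is contained in the smooth locus.

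For reducedness, I would argue by contradiction: suppose $f=g^{a}h$ with $a\geq 2$ and $g$ an irreducible, hence non-monomial, factor. Then $\Gamma_g:=\Trop(V(g))$ is a non-empty tropical curve, so it has an unbounded ray; let $v$ be a primitive direction of such a ray and pick a point $w$ on it far enough from the origin that $w$ lies in the relative interior of an unbounded edge $e$ of $\Gamma$ with primitive direction $v$. Since $w\in\Gamma_g$, the initial form $\inn_w(g)$ is supported on a boundary edge of the Newton subdivision of $g$ of positive lattice length, so after clearing a monomial it is a polynomial of degree $\geq 1$ in the single variable $s=x^{u}$, where $u$ is the primitive lattice vector along that edge. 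By multiplicativity of initial forms, $\inn_w(f)=\inn_w(g)^{a}\,\inn_w(h)$ then has degree $\geq a\geq 2$ in $s$. But $\Gamma$ is simple, so the weight $w(e)$ equals $1$, which forces $\inn_w(f)$ to be, up to a monomial, a binomial $\alpha+\beta s$ --- a contradiction. Hence $f$ is squarefree and $\overline X$ is reduced.

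It remains to exclude the reduced reducible case, and this I expect to be the main obstacle. If $\overline X=\overline X_1\cup\overline X_2$ with $\overline X_1,\overline X_2$ distinct integral curves defined by $f_1,f_2$ with Newton polygons $\Delta_1,\Delta_2$, then $\Delta=\Delta_1+\Delta_2$ with both $\Delta_i$ positive-dimensional, and $\Gamma=\Gamma_1\cup\Gamma_2$ as weighted balanced complexes, with $\Gamma_i=\Trop(V(f_i))$ of degree $\Delta_i$. Writing $g_i$ for the genus of $\Gamma_i$, $\ell_i$ for the lattice perimeter of $\Delta_i$ and $\ell=n+1-g$ for that of $\Delta$, additivity of lattice perimeter under Minkowski sum gives $\ell_1+\ell_2=\ell$, while the disjoint union of minimal parameterizations of $\Gamma_1$ and $\Gamma_2$ parameterizes $\Gamma$, so $g-\delta=\mathrm{genus}(\Gamma)\leq g_1+g_2$. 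Using that the tropical curves of degree $\Delta'$ and genus $g'$ form a family of dimension $g'+\ell'-1$ (the expected dimension of the tropical Severi variety; see \cite{Mik05, NS06}), together with $n=g+\ell-1$ and the two relations above, one finds that the decomposable tropical curves of degree $\Delta$ and genus $g-\delta$ form a family of dimension at most $(g-\delta)+\ell-2=n-\delta-1$ --- one less than the dimension of the full family of tropical curves of degree $\Delta$ and genus $g-\delta$. Since $\trop(S)$ is generic, in particular generic with respect to the finitely many auxiliary families attached to decompositions $\Delta=\Delta_1+\Delta_2$, each of the $n-\delta$ incidence conditions cuts the dimension by one, so no decomposable tropical curve of degree $\Delta$ and genus $g-\delta$ passes through $\trop(S)$; this contradicts $\Gamma=\Gamma_1\cup\Gamma_2$, and combined with the previous step it shows $\overline X$ is integral. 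The delicate points are that the tropical Severi varieties of the (possibly non-very-ample) polygons $\Delta_i$ arising here really have the expected dimension, and that, when $g_1+g_2$ exceeds $g-\delta$, forcing $\Gamma_1\cup\Gamma_2$ to have genus $g-\delta$ costs codimension at least $g_1+g_2-(g-\delta)$ in the space of pairs $(\Gamma_1,\Gamma_2)$; these are standard but need care, and the genericity of $\trop(S)$ must be taken relative to all of these families at once.
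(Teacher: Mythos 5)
Your first two steps are fine: the smooth-locus claim is exactly the paper's (degree $\Delta$ forces nonzero vertex coefficients, so the curve misses the torus-fixed points, which contain the singular locus of the normal toric surface), and your initial-form argument at a far-out point of an unbounded ray correctly rules out non-reduced curves, since an unbounded edge of the simple curve $\Gamma$ has weight $1$ while $\inn_w(f)=\inn_w(g)^a\inn_w(h)$ would have Newton segment of lattice length at least $a\geq 2$. But the heart of the proposition is irreducibility, and there your dimension-count strategy has a genuine gap. You need to exclude decompositions $[\Trop(X)]=[\Trop(X_1)]+[\Trop(X_2)]$ with $g_1+g_2>g-\delta$ as well, because the only inequality you get for free (parameterize $\Gamma$ by the disjoint union of parameterizations of $\Gamma_1,\Gamma_2$) is $\mathrm{genus}(\Gamma)\leq g_1+g_2$, which goes the wrong way: pairs with $g_1+g_2$ large live in families of dimension up to $\ell+g_1+g_2-2$, which meets or exceeds the number $n-\delta=\ell+(g-\delta)-1$ of point conditions. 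Everything therefore hinges on your asserted ``codimension at least $g_1+g_2-(g-\delta)$'' for the genus of the union to drop, and this is neither proved nor a standard quotable fact: the genus of an embedded weighted complex is a minimum over parameterizations, it can drop when the two summands share edges or sit in special position (cycles mapping into lines, i.e.\ exactly the superabundant phenomena that make positive-genus tropical dimension counts delicate), and quantifying that drop as codimension in the space of pairs is real work. Relatedly, your dimension formula ``tropical curves of degree $\Delta'$ and genus $g'$ form a family of dimension $g'+\ell'-1$'' is not correct type by type (non-trivalent types are smaller, superabundant types can be larger as deformation spaces); what one may use is a bound on the dimension of the image of the evaluation map for each combinatorial type, and your argument would need to be rebuilt on that statement, together with a genericity argument covering all the finitely many auxiliary families at once. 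As written, the decisive step is deferred rather than proved.

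For comparison, the paper's proof avoids all of this with a short local argument that exploits the simplicity of $\Gamma$ and treats non-reducedness and reducibility uniformly: if $X$ is not integral, the cycle $[X]$ decomposes as a nontrivial sum of effective cycles, hence $[\Trop(X)]=[\Trop(X_1)]+[\Trop(X_2)]$ nontrivially; since all unbounded edges of $\Gamma$ have weight $1$, the unbounded edges are partitioned between the two summands. Then, at a trivalent vertex, balancing forces any summand containing one adjacent edge to contain the other two, and at a $4$-valent vertex (a transverse crossing of two branches of the parameterization) it must contain the continuation of the edge; since the parameterizing curve is connected, a summand containing one edge is all of $\Trop(X)$ set-theoretically, so the other summand has no unbounded edges and is trivial, a contradiction. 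If you want to keep your own write-up, I would replace the dimension count by this balancing-plus-connectivity argument (or by a correct reference for the statement that decomposable tropical curves of degree $\Delta$ and genus $g-\delta$ miss a generic configuration of $n-\delta$ points, with the genus-drop issue addressed).
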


\begin{proof}
 We have already explained in \S\ref{sec:tropicalization} that every curve with tropicalization $\Gamma$ avoids all the $0$-dimensional orbits of $Y(\Delta)$, and thus, in particular, is contained in the smooth locus of $Y(\Delta)$. Suppose $X \in |L|_\Gamma$ is not integral.  Then the associated cycle of $[X]$ decomposes nontrivially as a sum of effective cycles $[X] = [X_1] + [X_2]$.  It follows that the associated tropical cycle $[\Trop(X)]$ decomposes nontrivially as $[\Trop(X_1)] + [\Trop(X_2)]$, by \cite[Corollary~4.4.6]{OP13}.  Since the unbounded edges of $\Trop(X)$ have weight 1, the unbounded edges of $\Trop(X_1)$ and $\Trop(X_2)$ partition the edges of $\Trop(X)$ nontrivially.   We now prove that this is impossible.

Say $e$ is an edge in $\Trop(X_1)$.  We will show that $\Trop(X_2)$ has no unbounded edges, and hence is empty.  Let $v$ be a vertex of $e$.  If $v$ is trivalent in $\Trop(X)$ then $\Trop(X_1)$, being balanced, must contain the other two edges as well.  On the other hand, if $v$ is 4-valent then $\Trop(X_1)$ must contain the continuation of $e$ through $v$.
Therefore, $\Trop(X_1)$ contains the image of all edges of the parameterizing tropical curve that share a vertex with the edge parameterizing $e$.  The parameterizing curve is connected, so this means that $\Trop(X)$ is equal to $\Trop(X_1)$, set-theoretically.  It follows that $\Trop(X_1)$ contains all of the unbounded edges of $\Trop(X)$, and hence $\Trop(X_2)$ has none, as required.
\end{proof}

\subsection{Conjecture \ref{conj:chiydelta} implies Conjecture \ref{conj:rational}}
Let $\Gamma$ be a tropical curve of genus $g-\delta$ and degree $\Delta$ through the points of $\trop(S)$.  Mimicking Definition 16 in \cite{GS14}, we define the {\em motivic Hilbert zeta function} of $\Gamma$ by
 $$Z_{\Gamma}(q)=\sum_{i\geq 0}[\Hilb^i(\cC_\Gamma)]q^{i+1-g}\quad \in K_0(\VF_K)\llbracket  q \rrbracket.$$
 It has been observed by several authors \cite{kapranov,PT10,GS14} that, if we replace $\cC_\Gamma$ by an integral Gorenstein curve over a field $F$ and $K_0(\VF_K)$ by the Grothendieck ring of $F$-varieties, this zeta function shares many of the properties of the Hasse-Weil zeta function for curves over finite fields. We will now explain that this remains true for $Z_{\Gamma}(q)$, and deduce that Conjecture \ref{conj:chiydelta} implies Conjecture \ref{conj:rational}.
  We denote by $\LL$ the class of $\AA^1_K$ in $K_0(\VF_K)$.

\begin{thm}\label{thm:zeta} \item
\begin{enumerate}

\item \label{it:imply1} The product
$$f_{\Gamma}(q)=q^{g-1}(1-q)(1-q\LL)Z_{\Gamma}(q)$$ is a polynomial of degree at most $2g$ over $K_0(\VF_K)$, and satisfies the functional equation $q^{2g}\LL^gf_{\Gamma}(1/(q\LL))=f_{\Gamma}(q)$ over $K_0(\VF_K)[\LL^{-1}]$.

\item \label{it:imply2}  There exist unique elements $N^{\mathrm{mot}}_0(\cC_\Gamma),\ldots, N^{\mathrm{mot}}_g(\cC_\Gamma)$ in
 the image of the localization morphism  $K_0(\VF_K)\to K_0(\VF_K)[\LL^{-1}]$ such that
\begin{equation}\label{eq:Nmot}
Z_\Gamma(q)=\sum_{r= 0}^g N^{\mathrm{mot}}_r(\cC_{\Gamma})\left(\frac{q}{(1-q)(1-q\LL)}\right)^{r+1-g}
\end{equation}
 in $K_0(\VF_K)[\LL^{-1}]\llbracket q \rrbracket$. Moreover,
 $$\begin{array}{lll}
 N^{\mathrm{mot}}_0(\cC_\Gamma)&=&[\,|L|_{\Gamma}],
\\[2pt] N^{\mathrm{mot}}_1(\cC_\Gamma)&=&[\cC_\Gamma]+(g-1)(\LL+1)[\,|L|_{\Gamma}],
\\[2pt] N^{\mathrm{mot}}_g(\cC_\Gamma)&=&[\oJac(\cC_\Gamma)].
 \end{array}
 $$
 \end{enumerate}
\end{thm}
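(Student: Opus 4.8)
The plan is to transport the classical analysis of the Hilbert zeta function of an integral Gorenstein curve (as in \cite{kapranov,PT10,GS14}) to the relative setting over the semialgebraic base $|L|_\Gamma$, working in $K_0(\VF_K)$ and its localization $K_0(\VF_K)[\LL^{-1}]$ in place of the Grothendieck ring of varieties over a field. By Proposition~\ref{prop:irreducible}, every curve in $|L|_\Gamma$ is integral and lies in the smooth locus of $Y(\Delta)$, hence is locally planar and in particular Gorenstein; moreover all fibers have arithmetic genus $g$ since $|L(\Delta)|$ is a linear system. Over the open locus $U\subset|L|$ of integral curves one has the relative compactified Jacobians $\oPic^d(\cC_U/U)$, the relative Hilbert schemes, and the Abel--Jacobi morphisms $\Hilb^i\to\oPic^{-i}$ (Altman--Kleiman); restricting these to the semialgebraic subset $|L|_\Gamma\subset U$ and passing to classes in $K_0(\VF_K)$ is harmless, since a Zariski-locally trivial projective bundle restricted to a semialgebraic base still has the expected class by the scissor relations in $K_0(\VF_K)$. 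A preliminary point, which I expect to be the one place requiring genuine care, is that $[\oPic^d(\cC_\Gamma)]$ is independent of $d$: this follows by twisting with $\cO_{\cC_\Gamma}(\widetilde\sigma)$ for a section $\widetilde\sigma\colon|L|_\Gamma\to\cC_\Gamma$ through one of the $n-\delta$ marked points, which by general position may be taken to lie in the relative interior of a weight-one edge of $\Gamma$ and hence in the smooth locus of $\cC_\Gamma\to|L|_\Gamma$. Write $[\oJac(\cC_\Gamma)]$ for this common class.

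For part~\eqref{it:imply1} the two inputs are: \emph{(a)} for $i\ge 2g-1$ every rank-one torsion-free sheaf $F$ of degree $i$ on a fiber has $H^1(F)=0$, since $\deg(\omega\otimes F^\vee)<0$ forces $h^0(\omega\otimes F^\vee)=0$; hence $\Hilb^i(\cC_\Gamma)\to\oPic^{-i}(\cC_\Gamma)$ is a Zariski-locally trivial $\PP^{i-g}$-bundle and $[\Hilb^i(\cC_\Gamma)]=[\PP^{i-g}]\cdot[\oJac(\cC_\Gamma)]$; and \emph{(b)} the ``motivic Serre duality'' identity
\[
[\Hilb^n(\cC_\Gamma)]=\LL^{\,n-g+1}\,[\Hilb^{2g-2-n}(\cC_\Gamma)]+\frac{\LL^{\,n-g+1}-1}{\LL-1}\,[\oJac(\cC_\Gamma)]
\]
in $K_0(\VF_K)[\LL^{-1}]$, valid for all $n$. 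Identity (b) is obtained by stratifying $\oPic^{-n}(\cC_\Gamma)$ according to the value of $h^0$ — on each stratum $\Hilb^n$ is a projective bundle — and matching the corresponding stratification for $\Hilb^{2g-2-n}$ through the isomorphism $L\mapsto \omega_{\cC_\Gamma/|L|_\Gamma}^{\vee}\otimes L^\vee$, using that $\omega$ is a line bundle on Gorenstein curves and that biduality $L^{\vee\vee}=L$ holds for rank-one torsion-free sheaves there; a short computation with the classes of projective spaces then gives the stated formula after summing over strata. Setting $f_\Gamma(q)=(1-q)(1-\LL q)\sum_i[\Hilb^i(\cC_\Gamma)]q^i$, input (a) shows that for $j>2g$ the coefficient of $q^j$ in $f_\Gamma$ equals $\bigl(c_{j-g+1}-(1+\LL)c_{j-g}+\LL c_{j-g-1}\bigr)[\oJac(\cC_\Gamma)]$ with $c_k=(\LL^k-1)/(\LL-1)$, which vanishes since $c_{k+1}-(1+\LL)c_k+\LL c_{k-1}=0$; hence $f_\Gamma$ is a polynomial of degree at most $2g$ over $K_0(\VF_K)$. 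Writing its $q^j$-coefficient as $a_j=[\Hilb^j(\cC_\Gamma)]-(1+\LL)[\Hilb^{j-1}(\cC_\Gamma)]+\LL[\Hilb^{j-2}(\cC_\Gamma)]$ and substituting identity (b) into $a_{2g-j}$, the same relation $c_{k+1}-(1+\LL)c_k+\LL c_{k-1}=0$ cancels all the $\oJac$-contributions and leaves $a_{2g-j}=\LL^{g-j}a_j$, which is exactly the functional equation $q^{2g}\LL^g f_\Gamma(1/(q\LL))=f_\Gamma(q)$ over $K_0(\VF_K)[\LL^{-1}]$.

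For part~\eqref{it:imply2}, rewrite $Z_\Gamma(q)=q^{1-g}f_\Gamma(q)/\bigl((1-q)(1-\LL q)\bigr)$; the asserted identity~\eqref{eq:Nmot} is then equivalent to $f_\Gamma(q)=\sum_{r=0}^{g}N^{\mathrm{mot}}_r(\cC_\Gamma)\,p_r(q)$ with $p_r(q)=q^r\bigl((1-q)(1-\LL q)\bigr)^{g-r}$. These $g+1$ polynomials have strictly decreasing degrees $2g,2g-1,\dots,g$ with leading coefficients $\LL^{g-r}$, so they form a basis of the free $K_0(\VF_K)[\LL^{-1}]$-module of polynomials of degree $\le 2g$ satisfying $q^{2g}\LL^g p(1/(q\LL))=p(q)$ (the change-of-basis matrix to the coefficients of $q^{g},\dots,q^{2g}$ is triangular with unit diagonal $\LL^{g},\dots,\LL^{0}$; one checks directly that each $p_r$ lies in this module, and that the module has rank $g+1$). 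Since $f_\Gamma$ lies in this module by part~\eqref{it:imply1}, it is a unique combination of the $p_r$, giving existence and uniqueness of the $N^{\mathrm{mot}}_r(\cC_\Gamma)$; that they lie in the image of $K_0(\VF_K)$ follows, as in \cite{GS14}, by expressing them as explicit $\ZZ[\LL]$-linear combinations of the $[\Hilb^i(\cC_\Gamma)]$. The three named values are read off by writing $f_\Gamma(q)/q^g=\sum_{s=0}^{g}N^{\mathrm{mot}}_{g-s}(\cC_\Gamma)\,w^s$ with $w=(1-q)(1-\LL q)/q$: comparing the most singular term as $q\to 0$ gives $N^{\mathrm{mot}}_0(\cC_\Gamma)=f_\Gamma(0)=[\Hilb^0(\cC_\Gamma)]=[\,|L|_\Gamma]$; the next coefficient gives $N^{\mathrm{mot}}_1(\cC_\Gamma)=a_1+g(1+\LL)[\,|L|_\Gamma]=[\cC_\Gamma]+(g-1)(\LL+1)[\,|L|_\Gamma]$, using $[\Hilb^1(\cC_\Gamma)]=[\cC_\Gamma]$; and setting $q=1$ (so $w=0$) gives $N^{\mathrm{mot}}_g(\cC_\Gamma)=f_\Gamma(1)$, which evaluates to $[\oJac(\cC_\Gamma)]$ from the explicit form of the tail $\sum_{i\ge 2g-1}[\Hilb^i(\cC_\Gamma)]q^i$ supplied by input (a). The main obstacle is thus the bookkeeping in establishing identity (b) relative over the semialgebraic base, together with the degree-independence of $[\oPic^d(\cC_\Gamma)]$; the remaining steps are formal manipulations of the resulting rational generating function.
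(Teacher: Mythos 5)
Your proposal is correct and follows essentially the same route as the paper: the paper transports the G\"ottsche--Shende zeta-function argument (Abel--Jacobi maps, Riemann--Roch, Serre duality, and the $\PP^{i-g}$-bundle structure of $\mathrm{AJ}_i$ for $i\ge 2g-1$) to classes in $K_0(\VF_K)$, citing \cite{GS14} (Prop.~15, Cor.~17, Rem.~18) for the functional equation, which is precisely the content of your inputs (a) and (b). The only organizational difference is in part (2): the paper obtains the $N^{\mathrm{mot}}_r$ integrally for all $r\ge 0$ from the automorphism $q\mapsto q/((1-q)(1-q\LL))$ of $K_0(\VF_K)\llbracket q\rrbracket$ and then kills the tail $r>g$ in $K_0(\VF_K)[\LL^{-1}]$ via the functional equation, whereas you expand $f_\Gamma$ in the triangular basis $p_r(q)=q^r\bigl((1-q)(1-\LL q)\bigr)^{g-r}$ and read off $N^{\mathrm{mot}}_0$, $N^{\mathrm{mot}}_1$, $N^{\mathrm{mot}}_g$ from $q=0$, the linear coefficient, and $q=1$ --- a formally equivalent bookkeeping, with the integrality of the $N^{\mathrm{mot}}_r$ supplied, as you note, by the same recursion the paper invokes.
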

\begin{proof}
 We denote by $U\subset |L|$ the open subscheme parameterizing the curves in $|L|$ that are integral and do not meet the singular locus of $Y(\Delta)$. Then $|L|_{\Gamma}$ is contained in $U$, by Proposition \ref{prop:irreducible}.   We write $\cC_U\to U$ for the restriction of $\cC$ over $U$; this is a flat projective family of integral Gorenstein curves of arithmetic genus $g$. Since our family $\cC\to |L|$ has a section by construction, the compactified relative Picard schemes $\oPic^i(\cC_U)$ are all isomorphic to $\oPic^0(\cC_U)=\oJac(\cC_U)$.

 Now, we can copy the proofs of Proposition 15, Corollary 17 and Remark 18 in \cite{GS14}, using the Abel-Jacobi maps
$$\mathrm{AJ}_i:\Hilb^i(\cC_U)\to \oPic^i(\cC_U),$$ Riemann--Roch and Serre duality to prove all the properties in the statement.
 The proof of \eqref{it:imply1} is identical to that of Proposition 15 in \cite{GS14}. Since the transformation $q\mapsto q/(1-q)(1-q\LL)$ defines an automorphism of the ring $K_0(\VF_K)\llbracket q \rrbracket$, there exists a unique sequence of elements $N^{\mathrm{mot}}_0(\cC_\Gamma),N^{\mathrm{mot}}_1(\cC_\Gamma), \ldots $ in $K_0(\VF_K)$ such that
$$Z_\Gamma(q)=\sum_{r= 0}^\infty N^{\mathrm{mot}}_r(\cC_{\Gamma})\left(\frac{q}{(1-q)(1-q\LL)}\right)^{r+1-g}.$$
 Comparing the terms of degree $1-g$ and $2-g$ yields the displayed values for $N^{\mathrm{mot}}_0(\cC_\Gamma)$ and $N^{\mathrm{mot}}_1(\cC_\Gamma)$.
By the result in \eqref{it:imply1}, the series
$$P(q)=q^g\sum_{i= 1}^\infty N^{\mathrm{mot}}_{g+i}(\cC_{\Gamma})\left(\frac{q}{(1-q)(1-q\LL)}\right)^{i}$$ must be a polynomial over $K_0(\VF_K)$ of degree at most $2g$ in $q$.
  It also follows from \eqref{it:imply1} that $P(q)$ satisfies the functional equation $q^{2g}\LL^gP(1/(q\LL))=P(q)$ over $K_0(\VF_K)[\LL^{-1}]$, and this can only happen when $P(q)$ vanishes in $K_0(\VF_K)[\LL^{-1},q]$, because $P(q)$ is divisible by $q^{g+1}$. Thus $N^{\mathrm{mot}}_{g+i}$ vanishes in $K_0(\VF_K)[\LL^{-1}]$ for all $i>0$. This means that the degree $\geq g$ part of the Laurent expansion the right hand side of \eqref{eq:Nmot} only depends on the $r=g$ term. For large $i$, the Abel-Jacobi morphism $\mathrm{AJ}_i$ is a projective bundle, so that $$[\Hilb^i(\cC_\Gamma)]=[\oJac(\cC_\Gamma)][\PP^{i-g}_K].$$ It follows that $N^{\mathrm{mot}}_g(\cC_\Gamma)=[\oJac(\cC_\Gamma)]$.
\end{proof}

\begin{cor}\label{cor:imply}  The invariant $N_r(\cC_{\Gamma})$ vanishes for $r>g$, and furthermore $$N_g(\cC_{\Gamma})=\chi_{-y}(\oJac(\cC_\Gamma)).$$
In particular, Conjecture \ref{conj:chiydelta} implies Conjecture \ref{conj:rational}.
\end{cor}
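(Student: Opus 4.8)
The plan is to push the motivic identity of Theorem~\ref{thm:zeta}\eqref{it:imply2} through the ring morphism $\chi_{-y}\circ\Vol$ and compare the result with the recursion defining the $N_r(\cC_\Gamma)$. First I would record that $\Vol([\AA^1_K])=[\AA^1_k]=\LL$ in $K_0(\Var_k)$: decomposing $\AA^1_K(K)=\{\val\geq 0\}\sqcup\{x\in K^\ast\,|\,\val(x)<0\}$, the first piece equals $\cX(R)$ for $\cX=\AA^1_R$ and has motivic volume $\LL$ by Proposition~\ref{prop:smoothvol}, while the second equals $\trop^{-1}(G_{<0})$ and has volume $\chi'(G_{<0})(\LL-1)=0$ by Proposition~\ref{prop:chi-poly}. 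Hence $\chi_{-y}\circ\Vol\colon K_0(\VF_K)\to\ZZ[y]$ sends $\LL$ to $y$, and since $y$ is invertible in $\ZZ[y,y^{-1}]$ this morphism extends to a ring morphism $K_0(\VF_K)[\LL^{-1}]\to\ZZ[y,y^{-1}]$.

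Applying this extended morphism coefficientwise to equation~\eqref{eq:Nmot} — using that $\Vol$ is a ring homomorphism, so that $(\chi_{-y}\circ\Vol)(Z_\Gamma(q))=\sum_{i\geq 0}\chi_{-y}(\Vol(\Hilb^i(\cC_\Gamma)))\,q^{i+1-g}$, while $\frac{q}{(1-q)(1-q\LL)}$ maps to $\frac{q}{(1-q)(1-qy)}$ — yields
\[
q^{1-g}\sum_{i\geq 0}\chi_{-y}\bigl(\Vol(\Hilb^i(\cC_\Gamma))\bigr)\,q^i=\sum_{r=0}^{g}\chi_{-y}\bigl(\Vol(N^{\mathrm{mot}}_r(\cC_\Gamma))\bigr)\left(\frac{q}{(1-q)(1-qy)}\right)^{r+1-g}
\]
as an identity of Laurent series in $q$ over $\ZZ[y,y^{-1}]$. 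By Definition~\ref{def:chiy} the left-hand side is exactly $q^{1-g}\sum_i\chi_{-y}(\Hilb^i(\cC_\Gamma))\,q^i$, the series appearing in the definition of the $N_r(\cC_\Gamma)$. Since $\frac{q}{(1-q)(1-qy)}=q+O(q^2)$, the $r$th summand on the right has $q$-order exactly $r+1-g$ and leading coefficient $1$, so the expansion of any Laurent series of this form in the family $\bigl(\frac{q}{(1-q)(1-qy)}\bigr)^{r+1-g}$ with coefficients in $\ZZ[y,y^{-1}]$ is unique. Matching the expansion with coefficients $N_r(\cC_\Gamma)$ against the one just produced — whose coefficients are $\chi_{-y}(\Vol(N^{\mathrm{mot}}_r(\cC_\Gamma)))$ for $r\leq g$ and $0$ for $r>g$ — gives $N_r(\cC_\Gamma)=0$ for $r>g$ and $N_r(\cC_\Gamma)=\chi_{-y}(\Vol(N^{\mathrm{mot}}_r(\cC_\Gamma)))$ for $0\leq r\leq g$ (in particular these last elements, a priori only in $\ZZ[y,y^{-1}]$, must lie in $\ZZ[y]$). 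For $r=g$, plugging in $N^{\mathrm{mot}}_g(\cC_\Gamma)=[\oJac(\cC_\Gamma)]$ from Theorem~\ref{thm:zeta}\eqref{it:imply2} and using Definition~\ref{def:chiy} gives $N_g(\cC_\Gamma)=\chi_{-y}(\Vol(\oJac(\cC_\Gamma)))=\chi_{-y}(\oJac(\cC_\Gamma))$.

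Finally, for the implication between the conjectures I would specialize to $\delta=g$: a tropical curve of genus $g-\delta$ is then a rational tropical curve, so the hypotheses of Conjectures~\ref{conj:rational} and~\ref{conj:chiydelta} coincide, and Conjecture~\ref{conj:chiydelta} reads $N(\Gamma)=y^{-g}N_g(\cC_\Gamma)$, which by the identification just proved equals $y^{-g}\chi_{-y}(\oJac(\cC_\Gamma))$ — the statement of Conjecture~\ref{conj:rational}. I do not expect a real obstacle here: essentially all the content lies in Theorem~\ref{thm:zeta}. The single point that needs care is that $\chi_{-y}\circ\Vol$ does not factor through the localization at $\LL$ on the nose (since $y$ is not a unit in $\ZZ[y]$), so the comparison has to be carried out over $\ZZ[y,y^{-1}]$, with the integrality of the coefficients deduced only afterwards from the uniqueness of the expansion.
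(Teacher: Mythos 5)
Your proposal is correct and follows the same route as the paper: the paper's (one-line) proof is exactly the observation that applying the ring morphism $\chi_{-y}\circ\Vol$, which sends $\LL$ to $y$, to the identity \eqref{eq:Nmot} of Theorem~\ref{thm:zeta} reproduces the defining recursion for the $N_r(\cC_\Gamma)$, whence $N_r(\cC_\Gamma)=\chi_{-y}(N^{\mathrm{mot}}_r(\cC_\Gamma))$ for all $r$. You have merely spelled out the details the paper leaves implicit (that $\Vol(\LL)=\LL$, the extension to $K_0(\VF_K)[\LL^{-1}]\to\ZZ[y,y^{-1}]$, and uniqueness of the expansion), all of which is accurate.
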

\begin{proof}
By definition, $N_r(\cC_{\Gamma})=\chi_{-y}(N^{\mathrm{mot}}_r(\cC_{\Gamma}))$ for every $r\geq 0$.
\end{proof}

\subsection{Unrefined tropical multiplicities}

Here we prove one of the two main partial results toward Conjectures~\ref{conj:chiydelta} and \ref{conj:rational} mentioned in the introduction, that the conjectures are true after setting $y = 1$ and specializing from $\chi_{-y}$ to Euler characteristic.

\begin{lem} \label{lem:generalcount}
The linear series $|L|$ contains only finitely many integral curves of geometric genus $g - \delta$ that do not meet the singular points of $Y(\Delta)$, and these curves have only nodal singularities.
\end{lem}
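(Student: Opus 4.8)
The plan is to use tropicalization to cut the problem down to the finitely many tropical curves of genus $g-\delta$ through $\trop(S)$, and then to invoke the nonarchimedean correspondence theorems together with a genericity argument for nodality.

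First I would show that every integral curve $X$ in $|L|$ of geometric genus $g-\delta$ not meeting the singular points of $Y(\Delta)$ lies in one of the sets $|L|_\Gamma$ for a tropical curve $\Gamma$ of degree $\Delta$ and genus $g-\delta$ through $\trop(S)$. Indeed, as recalled in \S\ref{sec:tropicalization}, such an $X$ is the closure of the vanishing locus of a Laurent polynomial with Newton polygon $\Delta$, so $\Gamma:=\Trop(X\cap T)$ is a tropical curve of degree $\Delta$; its genus is at most the geometric genus $g-\delta$ of $X$, and it passes through the $n-\delta$ points of $\trop(S)$. Since the moduli space of tropical curves of degree $\Delta$ and genus $h$ has dimension $n-g+h$ and $\trop(S)$ is in general position with respect to the relevant evaluation map, there is no tropical curve of genus $<g-\delta$ through $\trop(S)$ and only finitely many of genus exactly $g-\delta$; each of the latter is simple, with only trivalent and $4$-valent vertices. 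Hence $\Gamma$ is one of these finitely many curves and $X\in|L|_\Gamma$. It therefore suffices to prove that, for each such $\Gamma$, the integral curves in $|L|_\Gamma$ of geometric genus $g-\delta$ are finite in number and nodal.

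Fix such a $\Gamma$ and a lift $\widetilde S\subset T(K)$ of $S$. Finiteness follows from the nonarchimedean correspondence theorems \cite{NS06, Nis09, Gro11, Tyo12}: the curves of geometric genus $g-\delta$ in $|L(\Delta)|$ through $\widetilde S$ and tropicalizing to $\Gamma$ are finite in number (and in bijection with the lattice data at the vertices of $\Gamma$), and $|L|_\Gamma$ is contained in this set. For nodality I would combine two inputs: that the toric Severi variety parameterizing integral curves of geometric genus $g-\delta$ in $|L(\Delta)|$ has pure dimension $n-\delta$ and that a general member of each of its components is nodal — the toric analogue, due to Tyomkin \cite{Tyo12}, of a classical theorem of Harris — together with a dimension count on the incidence variety $\{(X,p_1,\dots,p_{n-\delta}):X\text{ in the Severi variety},\ p_i\in X\}$, which has dimension $2(n-\delta)$ and dominates $Y(\Delta)^{n-\delta}$. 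Since the preimage of the locus of curves that are non-nodal or meet the singular locus of $Y(\Delta)$ has strictly smaller dimension, over the general configuration $S$ every curve of the Severi variety through $S$ is nodal and contained in the smooth locus of $Y(\Delta)$. (Alternatively, this last point admits a direct deformation-theoretic proof: at a curve $X\in|L|_\Gamma$ of geometric genus $g-\delta$ one compares the tangent space to the equigeneric stratum of $|L(\Delta)|$ with that of the equisingular stratum, and the $n-\delta$ general point conditions force them to agree, which is possible only when every singularity of $X$ is a node.)

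The main obstacle is the nodality assertion. Finiteness is essentially formal once the correspondence theorems are available, but excluding singularities worse than nodes with the same total $\delta$-invariant requires the genericity input in an essential way, and one must take the same care as in the correspondence theorems to transfer the general position of $\trop(S)$ to the general position of the algebraic configuration needed for the Harris-type statement.
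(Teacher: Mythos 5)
Your route is genuinely different from the paper's, and it does not close. The paper proves Lemma~\ref{lem:generalcount} with no tropical input at all: it passes to a resolution of singularities to assume $Y(\Delta)$ smooth, lets $V\subset|L(\Delta)|\cong\PP^n_K$ be the closure of the locus of integral curves of geometric genus $g-\delta$, observes that $\dim V\leq n-\delta$, so that the $\delta$-dimensional linear series $|L|$ meets $V$ in finitely many points, and invokes \cite[Proposition~2.1]{Har86} to conclude that the general member of each $(n-\delta)$-dimensional component of $V$ is nodal. Measured against that, the central step of your proposal has a genuine gap which you flag but do not fill: both your incidence-variety count over $Y(\Delta)^{n-\delta}$ and the parenthetical equigeneric/equisingular tangent-space argument require $S$ to be a \emph{general configuration of algebraic points}, whereas the hypothesis only puts $\trop(S)$ in general position in $\RR^{2(n-\delta)}$. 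The fibre of coordinatewise valuation over a fixed general tropical configuration is a full-dimensional polyannulus, which can perfectly well meet the image of the non-nodal locus, so ``over the general configuration $S$'' is not available for the given $S$; the promised transfer of genericity from $\trop(S)$ to $S$ is precisely the missing content of the nodality assertion, not a detail to be deferred. The paper sidesteps this by invoking genericity once, at the level of the linear section $|L|$ of the closed locus $V$, after which Harris's theorem does all the work.

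There is also a gap in your reduction to the sets $|L|_\Gamma$ with $\Gamma$ of degree $\Delta$. The dictionary recalled in \S\ref{sec:tropicalization} requires the curve to avoid \emph{all} $T$-fixed points of $Y(\Delta)$, not merely the singular points, and the lemma only excludes the latter. When $\Delta$ has smooth vertices (e.g.\ $Y(\Delta)=\PP^2$ or $\PP^1\times\PP^1$), an integral curve in $|L|$ may pass through a smooth fixed point; its defining Laurent polynomial then has Newton polygon a proper subpolygon of $\Delta$ (possibly a segment), its tropicalization is not of degree $\Delta$, and it is invisible to your finite list of tropical curves, so such curves would need a separate dimension count that your argument does not contain. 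Finally, note that your finiteness step imports the correspondence theorems (and, for nodality, Tyomkin's Severi-variety results), which is considerably heavier than what the paper uses here; the paper's proof is elementary dimension theory plus \cite{Har86}, and it is then \emph{combined} with the classical correspondence theorems in the proof of Theorem~\ref{thm:toricSeveri}, so keeping the lemma independent of that machinery is also structurally cleaner.
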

\begin{proof}
Taking a resolution of singularities, we can reduce to the case where $Y(\Delta)$ is smooth. Let $V\subset |L(\Delta)|\cong \PP^{n}_K$ be the closure of the locus of integral curves of geometric genus $g-\delta$.
 Then the dimension of $V$ is at most $n-\delta$, so that the intersection with the general linear subspace $|L|$ of dimension $\delta$ is finite.
 By \cite[Proposition~2.1]{Har86}, the general member of each $(n - \delta)$-dimensional component of $V$ has only nodal singularities.
\end{proof}

\begin{thm} \label{thm:toricSeveri}
 We denote by $n^{\Delta,\delta}$ the toric Severi degree associated with $(\Delta,\delta)$, that is, the number of integral $\delta$-nodal curves in $|L|$.
\begin{enumerate} \item \label{it:eu1} Let $U\subset |L|$ be the open subset parameterizing integral curves that are disjoint from the singularities of $Y(\Delta)$. Then the number $n^{\Delta,\delta}$ is equal to the coefficient $n_{\delta}(\cC_U)$ in the generating series
\[
q^{1-g} \sum_{i = 0}^\infty \, \eu(\Hilb^i(\cC\times_{|L|}U)) \, q^i = \sum_{r = 0}^\infty n_r(\cC_U)\, q^{r+1-g}(1-q)^{2g-2r-2}.
\]
For $\delta=g$, we have $n^{\Delta,g}=\eu(\oJac(\cC\times_{|L|}U))$.

\item \label{it:eu2} Let $\Gamma\subset \RR^2$ be a tropical curve of genus $g-\delta$ and degree $\Delta$ through the points of $\trop(S)$.
Then the tropical multiplicity $n(\Gamma)$ of $\Gamma$ is equal to $$n_{\delta}(\cC_\Gamma):=N_{\delta}(\cC_\Gamma)\vert_{y=1}.$$
 In particular, when $g=\delta$, we have $n(\Gamma)=\eu(\oJac(\cC_\Gamma))$.
 \end{enumerate}
\end{thm}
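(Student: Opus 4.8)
\emph{Plan.} I would treat the two assertions separately: part~\eqref{it:eu1} is deduced from the classical theory of Severi degrees via Hilbert schemes of points and compactified Jacobians, and part~\eqref{it:eu2} is reduced to it by integrating over the semialgebraic base $|L|_\Gamma$ with Corollary~\ref{cor:multiplicative}. For part~\eqref{it:eu1}, I would first note that $\cC_U\to U$ is a flat projective family of integral curves of arithmetic genus $g$ on the smooth locus of $Y(\Delta)$: by Lemma~\ref{lem:generalcount} it has only finitely many $\delta$-nodal fibers, and a dimension count in the Severi variety of $|L(\Delta)|$ (as in the proof of that lemma) shows that, for $S$ in general position, every other fiber has geometric genus $>g-\delta$ and every integral $\delta$-nodal curve of $|L|$ is disjoint from the singular points of $Y(\Delta)$, hence lies in $U$. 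Since $K=\Puis$ is algebraically closed of characteristic zero and the Euler characteristics in question are $\ell$-adic, the results of \cite{GV98,PT10,KST11} apply and give $n_r(\cC_U)=0$ for $r>\delta$ and $n^{\Delta,\delta}=n_\delta(\cC_U)$; when $\delta=g$, the same hypotheses together with \cite{YZ96,Bea99,FGS99} (or, equivalently, the argument of Theorem~\ref{thm:zeta}\eqref{it:imply2} applied over $U$) give $n^{\Delta,g}=\eu(\oJac(\cC\times_{|L|}U))$.

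For part~\eqref{it:eu2}, the idea is to carry out the same integration over the semialgebraic set $|L|_\Gamma\subset U$ in place of a variety. I would first recall the single-curve input: for an integral Gorenstein curve $C$ of arithmetic genus $g$ there are unique integers $n_r(C)$, $0\le r\le g$, with $q^{1-g}\sum_i\eu(\Hilb^iC)q^i=\sum_r n_r(C)\,q^{r+1-g}(1-q)^{2g-2r-2}$, and one has $n_0(C)=1$, $n_\delta(C)=1$ if $C$ is $\delta$-nodal, and $n_r(C)=0$ for $r$ larger than the $\delta$-invariant of $C$ (the unrefined analogue of \cite[Propositions~15 and~42]{GS14}, cf.\ \cite{PT10}). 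Since each $u\mapsto\eu(\Hilb^i(\cC_u))$ is constructible on $U$ and all of them are determined by the finitely many constructible functions $u\mapsto n_0(\cC_u),\dots,n_g(\cC_u)$, there is a finite partition $|L|_\Gamma=\bigsqcup_\alpha Z_\alpha$ into locally closed semialgebraic subsets on each of which the whole series $q^{1-g}\sum_i\eu(\Hilb^i(\cC_u))q^i$ is constant, say equal to $z_\alpha(q)$. Applying Corollary~\ref{cor:multiplicative} to $\Hilb^i(\cC_\Gamma)\to|L|_\Gamma$ over each $Z_\alpha$, and then summing over $i$, gives $q^{1-g}\sum_i\eu(\Hilb^i(\cC_\Gamma))q^i=\sum_\alpha\eu(Z_\alpha)\,z_\alpha(q)$; expanding each $z_\alpha(q)$ and comparing coefficients of the linearly independent Laurent series $q^{r+1-g}(1-q)^{2g-2r-2}$ yields $n_r(\cC_\Gamma)=\sum_\alpha\eu(Z_\alpha)\,n_r(\cC_{u_\alpha})$ for all $r$.

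Taking $r=\delta$ then finishes the argument. Every curve in $|L|_\Gamma$ is integral, avoids the singular locus of $Y(\Delta)$ (Proposition~\ref{prop:irreducible}), and has geometric genus at least $g-\delta$ (the genus of $\Gamma$), so its $\delta$-invariant is at most $\delta$; hence $n_\delta(\cC_{u_\alpha})=0$ unless $\cC_{u_\alpha}$ has geometric genus exactly $g-\delta$, in which case it is $\delta$-nodal by Lemma~\ref{lem:generalcount} and $n_\delta(\cC_{u_\alpha})=1$. By Lemma~\ref{lem:generalcount} these $\delta$-nodal fibers are finite in number, and by the correspondence theorem recalled in \S\ref{sec:tropicalcounting} there are exactly $n(\Gamma)$ of them; they form a union of point-strata of total Euler characteristic $n(\Gamma)$, and all other strata contribute $0$. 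Therefore $n_\delta(\cC_\Gamma)=n(\Gamma)$. Finally, when $\delta=g$, Theorem~\ref{thm:zeta}\eqref{it:imply2} identifies $N^{\mathrm{mot}}_g(\cC_\Gamma)=[\oJac(\cC_\Gamma)]$, whence $n_g(\cC_\Gamma)=\eu(\oJac(\cC_\Gamma))$ and the identity just proved becomes $n(\Gamma)=\eu(\oJac(\cC_\Gamma))$.

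I expect the main obstacle to be setting up the semialgebraic stratification of $|L|_\Gamma$ so that Corollary~\ref{cor:multiplicative} can be applied uniformly in $i$ — that is, so that the whole Hilbert-scheme generating function of the fibre, not just one of its coefficients, is locally constant on finitely many pieces — and, hand in hand with this, pinning down the single-curve statement that $n_r(C)$ vanishes above the $\delta$-invariant of $C$; it is this vanishing that forces only the finitely many $\delta$-nodal fibres to contribute to $n_\delta(\cC_\Gamma)$. A more routine point is transferring the classical results of \cite{GV98,PT10,KST11,YZ96,Bea99,FGS99} from $\CC$ to the Puiseux series field, which is harmless since they concern $\ell$-adic Euler characteristics of finite-type schemes.
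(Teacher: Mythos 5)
Your proposal is correct and follows essentially the same route as the paper's proof: Lemma~\ref{lem:generalcount} plus the classical per-fiber facts for part~\eqref{it:eu1}, and Proposition~\ref{prop:irreducible}, integration over the semialgebraic base via Corollary~\ref{cor:multiplicative}, the classical correspondence theorems, and Theorem~\ref{thm:zeta} for part~\eqref{it:eu2}. The explicit stratification of $|L|_\Gamma$ and the coefficient comparison you spell out are exactly the details the paper compresses into ``integrating with respect to Euler characteristic on the base.''
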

\begin{proof}
\eqref{it:eu1}  Lemma~\ref{lem:generalcount} tells us that $\cC\times_{|L|}U \rightarrow U$ has finitely many $\delta$-nodal fibers, and all other fibers have geometric genus greater than $g - \delta$.   We can compute $\eu(\Hilb^i(\cC\times_{|L|}U ))$ by integrating with respect to Euler characteristic on the base.  Each fiber of geometric genus greater than $g - \delta$ contributes 0 to $n_{\delta}(\cC_U)$ and each $\delta$-nodal fiber contributes 1.  It follows that $n_{\delta}(\cC_U)$ equals the toric Severi degree $n^{\Delta,\delta}$. The statement for $g=\delta$ now follows from the fact that $n_g(\cC_U)=\eu(\oJac(\cC\times_{|L|}U))$ by the same arguments as in the proof of Theorem \ref{thm:zeta}.

\eqref{it:eu2} All the curves in $|L|_{\Gamma}$ are integral and contained in the smooth locus of $Y(\Delta)$, by Proposition \ref{prop:irreducible}. Thus we can copy the proof of \eqref{it:eu1}, using Corollary~\ref{cor:multiplicative} to compute $\eu(\Hilb^i(\cC_\Gamma))$ by integrating with respect to Euler characteristic on the base.
  This shows that $n_{\delta}(\cC_\Gamma)$ is the number of $\delta$-nodal fibers in $\cC_{\Gamma}\to |L|_{\Gamma}$, which is the ordinary tropical multiplicity of $\Gamma$, by the classical correspondence theorems.
 The statement for $g=\delta$ again follows from Theorem \ref{thm:zeta}, since $n_{g}(\cC_\Gamma)=\eu(N^{\mathrm{mot}}_g(\cC_\Gamma))$ by definition.
\end{proof}

\section{Refined multiplicities for genus 1}  \label{sec:genus1}

In this section, we prove Conjectures~\ref{conj:rational} and \ref{conj:chiydelta} for $g = 1$. We keep the notations from \S\ref{ss:conjb}.

\begin{thm}\label{thm:genus1}
Assume that $g=1$, and let $\delta$ be either $0$ or $1$. Let $\Gamma\subset \RR^2$ be a tropical curve of genus $g-\delta$ and degree $\Delta$ through the points of $\trop(S)$.
Then the Block--G\"ottsche refined tropical multiplicity $N(\Gamma)$ is equal to
$ y^{-\delta} N_{\delta}(\cC_\Gamma)$. If $\delta=1$, then we also have $N(\Gamma)=y^{-1}\chi_{-y}(\cC_\Gamma)$.
\end{thm}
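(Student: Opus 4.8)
The plan is to reduce both equalities to a single computation of a motivic volume, and then to carry that out with the tropical formula of Proposition~\ref{prop:tropvol3} together with a local semi-stable analysis at the nodal fibers.

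\emph{Reductions.} The last assertion (for $\delta=1$) is immediate from the first equality together with Theorem~\ref{thm:zeta}: when $g=1$ the correction term $(g-1)(\LL+1)[\,|L|_\Gamma]$ vanishes, so $N^{\mathrm{mot}}_1(\cC_\Gamma)=[\cC_\Gamma]$ and hence $N_1(\cC_\Gamma)=\chi_{-y}(\cC_\Gamma)$. For $\delta=0$ the linear series $|L|\cong\PP^0$ is a single reduced point carrying its only member, a smooth curve of genus $1$; so there is exactly one realizable tropical curve $\Gamma$, with $n(\Gamma)=1$ and hence $m(v)=1$ at every vertex and $N(\Gamma)=1$, while $|L|_\Gamma$ is that point and Theorem~\ref{thm:zeta} gives $N_0(\cC_\Gamma)=\chi_{-y}([\,|L|_\Gamma])=1$. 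Thus everything reduces to proving $\chi_{-y}(\cC_\Gamma)=y\,N(\Gamma)$ for $\delta=1$, where $|L|\cong\PP^1$ is a pencil of integral arithmetic-genus-$1$ curves lying in the smooth locus of $Y(\Delta)$ (Proposition~\ref{prop:irreducible}).

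\emph{The defect of $\Gamma$.} Because $\Delta$ has a single interior lattice point $p$ and $\Gamma$ is a simple genus-$0$ tropical curve through general points, the Newton subdivision dual to $\Gamma$ is essentially unimodular apart from one feature at $p$, and exactly three cases arise: (a) $p$ is a vertex of the subdivision, which then has one unit parallelogram and otherwise only unimodular triangles, and $\Gamma$ has a loop through a $4$-valent self-crossing — here $N(\Gamma)=1$; (b) $p$ lies in the relative interior of an edge, necessarily of lattice length $2$, whose two endpoints in $\Gamma$ are trivalent vertices $v_1,v_2$ with $m(v_1)=m(v_2)=2$ — here $N(\Gamma)=M(v_1)M(v_2)=[2]_y^2$, writing $[m]_y=y^{(m-1)/2}+\cdots+y^{-(m-1)/2}$; (c) $p$ lies in the interior of a single non-unimodular triangle $T$, dual to a trivalent vertex $v_0$ with $m(v_0)=2\,\mathrm{area}(T)>1$, all other triangles unimodular — here $N(\Gamma)=M(v_0)=[m(v_0)]_y$. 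In each case every $X_t$ with $t\in|L|_\Gamma$ is schön and smooth of genus $1$, except for the $n(\Gamma)=\prod_v m(v)$ parameters $t_1,\dots,t_{n(\Gamma)}$ (Theorem~\ref{thm:toricSeveri}\eqref{it:eu2}) at which $X_{t_i}$ is a nodal rational curve whose node tropicalizes into the defect feature.

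\emph{The volume computation.} Partition $\cC_\Gamma$ into the $n(\Gamma)$ nodal fibers and the complementary open part $\cC_\Gamma^{\circ}$. A nodal rational curve is $\GG_{m,K}\sqcup\{\mathrm{pt}\}$ as a variety, so by Proposition~\ref{prop:limnolim} (recall $K=\Puis$) each nodal fiber contributes $\chi_{-y}=y$, hence $n(\Gamma)\,y$ in total. For $\cC_\Gamma^{\circ}$, pick a coordinate on $\PP^1$ with $|L|_\Gamma\subset\GG_{m,K}$ and regard the torus part of $\cC_\Gamma^{\circ}$ as a semialgebraic subset $V(f_s)(K)\cap\trop^{-1}(\Pi)$ of the surface $V(f_s)\subset\GG_{m,K}^{3}$, where $\Pi\subset\RR^3$ is the $2$-dimensional $G$-rational polyhedral set projecting onto the polyhedron defining $|L|_\Gamma$ in the first coordinate and onto $\Gamma$ in the last two. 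Refine $\Pi$ by a $G$-admissible fan so that the finitely many cells meeting the bad locus $\bigcup_i\{\trop(t_i)\}\times(\text{defect feature})$ are isolated. On the cells where the initial degeneration of $V(f_s)$ is smooth, Proposition~\ref{prop:tropvol3} (which computes the volume of $\overline{X}_\tau$, thereby also accounting for the part of $\cC_\Gamma$ over the toric boundary of $Y(\Delta)$) evaluates the contribution to $\Vol$ cell by cell; on the remaining pieces, a semialgebraic neighbourhood of the nodal fibers near the defect, one computes $\Vol$ from the local normal form $uv=(\text{local parameter})$ of a node-smoothing degeneration by passing to a strictly semi-stable model and invoking Corollary~\ref{cor:sstablecor}. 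These last pieces are where the unipotent monodromy around each $t_i$ contributes the extra terms beyond the naive $n(\Gamma)\,y$: a further $\LL$ per node in case (a), and $(1-\LL)$-corrections in cases (b) and (c). Summing all contributions and specializing $\LL\mapsto y$ yields $\chi_{-y}(\cC_\Gamma)$ equal to $y$, to $1+2y+y^2=y[2]_y^2$, or to $y[m(v_0)]_y$ — that is, $y\,N(\Gamma)$ — in cases (a), (b), (c) respectively.

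\emph{Main obstacle.} The work is entirely in the last paragraph: choosing the tropical refinement and the local semi-stable models so that Proposition~\ref{prop:tropvol3} and Corollary~\ref{cor:sstablecor} can be applied on complementary semialgebraic pieces and then glued by additivity of $\Vol$, and verifying that each local contribution near a node depends only on the combinatorial type of the defect and not on the chosen semialgebraic neighbourhood — this independence coming from the independence of the motivic volume of the tropical fan (Corollary~\ref{cor:tropvol} and Proposition~\ref{prop:tropvol3}). In practice I would organise this exactly as the three worked computations of Example~\ref{ex:cubic} — $\Vol(\cC_\Gamma)=\LL$, $(1+\LL)^2$, $1+\LL+\LL^2$ — and check that they go through verbatim once the defect type is fixed, the one new point being that the local node computation is uniform across lattice polygons $\Delta$ with a single interior lattice point.
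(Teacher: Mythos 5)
Your reductions (the $\delta=0$ case, and the passage from $N_1(\cC_\Gamma)$ to $\chi_{-y}(\cC_\Gamma)$ via Theorem~\ref{thm:zeta} and Corollary~\ref{cor:imply} when $g=1$) agree with the paper. But the heart of the matter, the computation of $\Vol(\cC_\Gamma)$, is left as a plan whose key quantitative claims are not derived, and at least one of them is inconsistent with the known answer. You split off the $n(\Gamma)$ nodal fibers (contributing $n(\Gamma)\,y$) and assert that the local pieces near the nodes supply ``a further $\LL$ per node in case (a)''; but in case (a) the total is $\Vol(\cC_\Gamma)=\LL$, i.e.\ $\chi_{-y}(\cC_\Gamma)=y$, so the single nodal fiber already accounts for everything and the complement contributes $0$; in cases (b) and (c) the correction beyond $n(\Gamma)y$ is $(1-y)^2$ in total. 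These corrections are precisely the content of the theorem, and your proposal never computes them. Moreover the proposed patching is problematic as stated: your open part $\cC_\Gamma^{\circ}$ is the preimage of a polyhedral set \emph{minus finitely many fibers}, so over the cells meeting the locus where the nodes tropicalize, Proposition~\ref{prop:tropvol3} (which computes the volume of the full preimage of $\mathring{\tau}$, or its closure) does not apply to $\cC_\Gamma^{\circ}$, while a local strictly semi-stable model of a node smoothing computed via Corollary~\ref{cor:sstablecor} would include pieces of the nodal fibers you have already counted; none of this bookkeeping, nor the claimed independence of the choices, is carried out.

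The proposal also misses the structural fact that makes the paper's computation work and renders any local semi-stable analysis unnecessary: the universal curve, embedded in $Y(\Delta)\times\PP^n$ and tropicalized there (so that the base direction is recorded: $\overline{\Gamma}\times\mathrm{pt}$ in the loop case, $\overline{\Gamma}\times(\RR_{\geq 0}\cup\infty)$ in the others), has \emph{smooth} initial degenerations along every face, including the ``defect'' faces (the $4$-valent vertex, the weight-$2$ edge and its endpoints, the multiplicity-$3$ vertex). This is not automatic: at the $4$-valent vertex one must show that the initial forms of the point conditions put an odd number of the four relevant coefficients in $\CC^{*}$ and the rest in $\CC^{*}z$, which the paper proves by the no-strings argument of Gathmann--Markwig; one also needs the tree structure of the marked edges in the Newton subdivision, and the case distinction of whether the weight-$2$ edge carries a marked point (this changes $|L|_\Gamma$ and the initial degenerations, even though $N(\Gamma)$ is the same, so your three-case list must be refined to four as in the paper). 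Once smoothness is established, Proposition~\ref{prop:tropvol3} applies uniformly to every face, the classes $[\mathrm{in}_\gamma\cC]$ are computed explicitly in each combinatorial subcase, and Pick's formula supplies the face counts; summing gives $y$, $1+2y+y^2$, or $1+y+y^2$ directly. Without either this smoothness analysis or an actual execution of your local computation with correct accounting, the proposal does not establish $\chi_{-y}(\cC_\Gamma)=y\,N(\Gamma)$.
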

\noindent  The case $\delta=0$ is straightforward: $\cC$ is a single elliptic curve and $N(\Gamma)=N_{0}(\cC_\Gamma)=1$. Thus, we may assume that $\delta=1$.
  Since $g=1$, the relative compactified Jacobian $\oJac(\cC)$ is simply the family $\cC$ itself. By Corollary \ref{cor:imply}, it is enough to show that
  $y^{-1}\chi_{-y}(\cC_\Gamma)$ is equal to the Block--G\"ottsche multiplicity $N(\Gamma)$ of $\Gamma$.

In the remainder of this section, we will compute $\chi_{-y}(\cC_\Gamma)$ by considering the natural embedding of $\cC$ in the toric variety $Y(\Delta) \times \PP^n$ and realizing
$\cC_\Gamma$ as the preimage of a polyhedral subset of $\Trop(\cC)$ along which all initial degenerations are smooth.  We then apply Proposition~\ref{prop:tropvol2} to compute the motivic volume of $\cC_\Gamma$ in terms of its initial degenerations, and confirm that $N(\Gamma)=y^{-1}\chi_{-y}(\cC_\Gamma)$.

\subsection{Initial degenerations of $\cC$}

Let $\PP^n \cong |L(\Delta)|$ be the projective space over $K$ with homogeneous coordinates $a_u$ for lattice points $u$ in $\Delta \cap \ZZ^2$.  The universal curve of the complete linear series $|L(\Delta)|$ is the hypersurface in $Y(\Delta) \times \PP^n$ defined by the vanishing locus of the universal equation
\begin{equation} \label{eq:universal}
f = \sum_{u \in \Delta \cap \ZZ^2} a_u x^u.
\end{equation}
Let $\overline a_u$, and $\overline x^u$ be the leading coefficients of $a_u$ and $x^u$, respectively.  We assume $g = 1$, so $\Delta$ contains a unique interior lattice point.

Let $v_1, \ldots, v_{n-1}$ be rational points in general position in $\RR^2$, let $x_i$ be a point in $T$ whose tropicalization is $v_i$, and let $|L|$ be the linear series of dimension $1$ parameterizing curves in $L(\Delta)$ that contain $x_1, \ldots, x_{n- 1}$, with $\cC \rightarrow |L|$ its universal curve.

There are finitely many parameterized tropical rational curves of degree $\Delta$ that contain $x_1, \ldots, x_{n-1}$.  Fix one such tropical curve $\Gamma$.  Recall that $|L|_\Gamma \subset |L|$ is the semialgebraic subset parameterizing curves with tropicalization $\Gamma$, and $\cC_\Gamma \rightarrow |L|_\Gamma$ is the restriction of the universal curve.

We consider four cases in our computation of $\chi_{-y} (\cC_\Gamma)$, similar to the cases in Example~\ref{ex:cubic}, according to whether $\Gamma$ contains a loop, a bounded edge of multiplicity 2 (with or without a marked point on that edge), or a vertex of multiplicity 3.  In each case, we decompose $\cC_\Gamma$ into smaller semialgebraic sets, given by preimages of faces of $\Trop(\cC_\Gamma)$, determine the contributions of preimages of different combinatorial types of faces of the tropicalization to $\chi_{-y}(\cC_\Gamma)$, and take a sum over faces to produce the desired result.

\subsection{Case 1: The tropical curve $\Gamma$ contains a loop.}  \label{sec:loop}

We observe that $\cC_\Gamma$ is the preimage in $\cC$ of a polyhedral subset of $\Trop(Y(\Delta) \times \PP^n)$ of the form $\overline \Gamma \times \mathrm{pt}$, where $\overline \Gamma$ is the closure of $\Gamma$ in $\Trop(Y(\Delta))$.  To see this, first note that there is a unique concave function $\phi: \Delta \cap \ZZ^2 \to \QQ$ whose value at the interior point is 0 and such that the corner locus of the corresponding concave piecewise linear function $\psi$ on $\RR^2$ given by
\[
\psi(v) = \min_{u \in \Delta \cap \ZZ^2} \langle u, v \rangle + \phi(u)
\]
is exactly $\Gamma$.  Since $\Gamma$ contains a loop, the interior point $0$ must be a vertex of the Newton subdivision, and it follows that all edges of $\Gamma$ have weight 1 and every lattice point in $\Delta$ is a vertex of the Newton subdivision.   Therefore, if $C \subset Y(\Delta)$ is the curve cut out by the equation
\[
f_C = \sum_{u \in \Delta \cap \ZZ^2} a_u x^u,
\]
with coefficients $a_{u} \in K$ such that $\Trop(C) = \overline \Gamma$, then each $a_{u}$ is in $K^*$.  Indeed, if we normalize so that $a_0 = 1$, then $\Trop(C \cap T)$ is equal to $\Gamma$ if and only if $\val(a_u) = \phi(u)$ for all $u$.  Therefore $\cC_\Gamma$ is the preimage in $\cC$ of $\overline \Gamma \times \mathrm{pt}$, where $\mathrm{pt}$ is the point in $\RR^n \subset \Trop(\PP^n)$ whose $u$th coordinate is $\phi(u)$.

For simplicity, we identify a face (vertex or edge) $\gamma$ of $\Gamma$ with the corresponding face of $\Gamma \times \mathrm{pt}$. We will consider the initial degenerations $\inn_w \cC$ at $\Q$-rational points $w\in \mathring{\gamma}$, and show that they are all smooth and isomorphic.  Since $\rec(\gamma)$ is a cone in the fan associated with $\Delta$ for each face $\gamma$, this will put us in position to apply Proposition~\ref{prop:tropvol3} (with $\overline{X} = \cC$) to compute $\chi_{-y}(\cC_\Gamma)$.  Indeed, with the notation from Section~\ref{sec:tropcomp}, $\cC_\Gamma$ decomposes as a disjoint union of the semialgebraic sets $\cC_\gamma$, and hence
\[
\chi_{-y}(\cC_\Gamma) = \sum_\gamma \chi_{-y}(\cC_\gamma).
\]
Moreover, Proposition~\ref{prop:tropvol3} says that 
\[
\chi_{-y}(\cC_\gamma) = (-1)^{\dim \gamma - \dim \rec(\gamma)} [\inn_\gamma \cC] / (\LL - 1)^{\dim \rec(\gamma)}.
\]

Each face of $\Gamma$ is dual to a positive dimensional face of the Newton subdivision.  Let $\gamma$ be the face of $\Gamma$ dual to $F$.  Then the initial form of the universal equation $f$ in \eqref{eq:universal} at any rational point $w$ in the interior of $\gamma$ is given by
\[
\inn_{\gamma} f = \sum_{u \in F \cap \ZZ^2} \overline{a}_{u} \overline x^u.
\]
   The linear point conditions that cut out the codimension $n-1$ linear series $|L|$ in the complete linear series $|L(\Delta)|$ involve only the coefficients $a_u$ and not the variables $x^u$. Thus their initial forms do not depend on the choice of $\gamma$ and $w$. 
  Our computations will show that $\inn_\gamma(f)$ together with the initial forms of the point conditions define a smooth closed subvariety of dimension two in the reduction of the torus $T$. Then this subvariety must be the initial degeneration of $\cC$ at $w$, for any rational point $w$ in the interior of $\gamma$, by \cite[Theorem~1.4]{OP13}. In particular, $\inn_w\cC$ does not depend on $w$; we denote it by $\inn_\gamma \cC$.
   The same observation applies in all the further cases.


\subsubsection{The linear relations imposed by point conditions} \label{sec:linearrelations}

Recall that $S$ is a set of $n-1$ points in $T(K)$ whose tropicalizations are in general position in $\RR^n$.  Say $s$ is a point in $S$ whose tropicalization lies in the edge $\gamma$ dual to the edge $[u,u']$ of the Newton subdivision.  Then the initial form of the linear relation imposed by vanishing at $s$ is simply $c\overline a_u + c' \overline a_{u'}=0$, where $c$ and $c'$ are the leading coefficients of the monomials $x^u$ and $x^{u'}$, respectively, evaluated at the point $s$.  It follows that if $u$ and $u'$ are any two vertices of the Newton subdivision connected by a series of edges that are dual to edges of $\Gamma$ containing marked points, then the linear relations force $\overline a_u$ to be a fixed nonzero scalar multiple of $\overline a_{u'}$.

We observe that the set of edges in the Newton subdivision dual to edges of $\Gamma$ that contain marked points form a disjoint union of two trees that together contain all lattice points in $\Delta$.  To see this, note that there are $n-1$ such edges among the $n+1$ lattice points in $\Delta$, and these edges cannot form a loop, due to the genericity of the marked points.  Normalizing so that $\overline a_u$ is 1 for one vertex of $\Delta$ and choosing a variable $z = \overline a_{u'}$ for some fixed $u'$ in the tree that does not contain $u$, we see that the initial forms of the linear relations determine $\overline a_{u''}$ for all lattice points in $u''$ as either a fixed element of $\CC^*$ or a fixed element of $\CC^*$ times $z$, according to which of the two trees contains $u''$.

\bigskip

We now compute  $\inn_w \cC$ and $\chi_{-y}(\cC_\gamma)$ for all faces $\gamma$ of $\Gamma$ and all rational points $w$ in $\mathring{\gamma}$.  We divide these computations into subcases, according to the combinatorial possibilities for $\gamma$.

\subsubsection{Subcase 1a:  $\gamma$ is a bounded edge of $\Gamma$.}
The face of the Newton subdivision dual to $\gamma$ is an edge of lattice length 1.  Therefore, after a change of coordinates on the dense torus in $Y(\Delta)$, the initial form $\inn_{\gamma}(f)$ is a linear function in one coordinate, with coefficients in the set $\{ \overline a_u \}$.  Since the linear relations imposed by point conditions allow us to identify each $\overline a_u$ with a monomial of degree 0 or 1 in $z$, we see that $\inn_{\gamma}\cC$ is isomorphic to a hypersurface in a three dimensional torus whose Newton polytope is an edge of length $1$.  It follows that $\inn_{\gamma} \cC \cong \mathbb{G}_{m,k}^2$, and $\chi_{-y}(\cC_\gamma) = -y^2 + 2y - 1$.

\subsubsection{Subcase 1b: $\gamma$ is an unbounded edge of $\Gamma$.}
Just as in the previous subcase, we have $\inn_{\gamma} \cC \cong \mathbb{G}_{m,k}^2$.  The only difference in this subcase is that $\dim \rec(\gamma) = 1$, and hence $\chi_{-y}(\cC_\gamma) = y - 1$.

\subsubsection{Subcase 1c: $\gamma$ is a 3-valent vertex}  After a change of coordinates, we may assume that the face dual to $\gamma$ is the standard unit triangle, so $\inn_{\gamma} f$ is a linear combination of $1$, $x$, and $y$, with coefficients in the set $\{ \overline a_u \}$.  After using the linear relations to identify each $\overline a_u$ with either a fixed element of $\CC^*$ or a fixed element of $\CC^* z$, we see that  $\inn_{\gamma}\cC$ is isomorphic to a hypersurface in the torus with coordinates $x$, $y$, and $z$, whose Newton polytope is a unimodular triangle.  It follows that $\inn_{\gamma}\cC$ is smooth, $[\inn_{\gamma}\cC] = (\LL - 1)(\LL - 2)$, and $\chi_{-y} (\cC_\gamma) = y^2 - 3y + 2$.

\subsubsection{Subcase 1d:  $\gamma$ is the 4-valent vertex $v$}  After a change of coordinates, we may assume that the face dual to $\gamma$ is the standard unit square, so $\inn_{v} f$ is a linear combination of $1$, $x$, $y$, and $xy$ with coefficients in $\CC^* \sqcup \CC^* z$.

We claim that the number of these coefficients that are in  $\CC^*$ is odd.  Let $\Gamma'$ denote the boundary between the union of the closed regions where the coefficients are in $\CC^*$ and those where the coefficients are in $\CC^* z$.  To prove the claim, we will show that $\Gamma'$ makes a turn at $v$, as shown in (a)--(d) of the following figure.  Each marked point is denoted by a $\times$ and the $4$-valent vertex $v$ is depicted as a black dot.

\bigskip

\begin {center} \scalebox{0.7}{\begin{picture}(0,0)%
\includegraphics{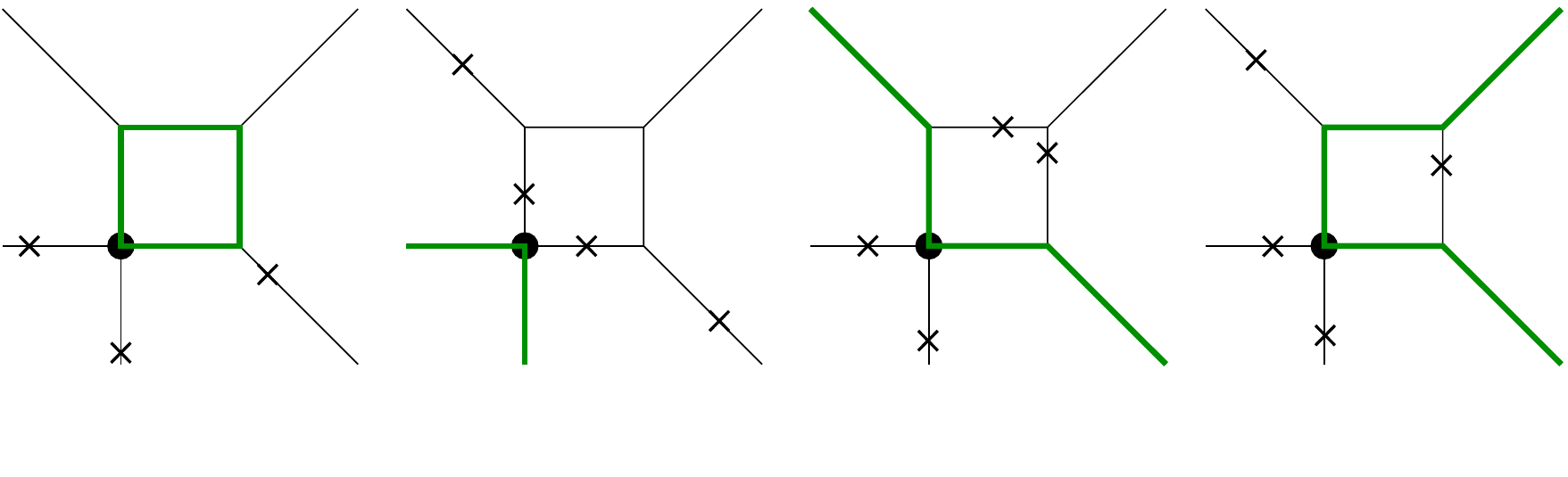}%
\end{picture}%
\setlength{\unitlength}{4144sp}%
\begingroup\makeatletter\ifx\SetFigFont\undefined%
\gdef\SetFigFont#1#2#3#4#5{%
  \reset@font\fontsize{#1}{#2pt}%
  \fontfamily{#3}\fontseries{#4}\fontshape{#5}%
  \selectfont}%
\fi\endgroup%
\begin{picture}(8032,2432)(5969,-7291)
\put(6770,-7200){\makebox(0,0)[lb]{\smash{{\SetFigFont{14}{16.8}{\familydefault}{\mddefault}{\updefault}{\color[rgb]{0,0,0}(a)}%
}}}}
\put(8776,-7171){\makebox(0,0)[lb]{\smash{{\SetFigFont{14}{16.8}{\familydefault}{\mddefault}{\updefault}{\color[rgb]{0,0,0}(b)}%
}}}}
\put(10891,-7171){\makebox(0,0)[lb]{\smash{{\SetFigFont{14}{16.8}{\familydefault}{\mddefault}{\updefault}{\color[rgb]{0,0,0}(c)}%
}}}}
\put(12961,-7171){\makebox(0,0)[lb]{\smash{{\SetFigFont{14}{16.8}{\familydefault}{\mddefault}{\updefault}{\color[rgb]{0,0,0}(d)}%
}}}}
\put(6796,-5281){\makebox(0,0)[lb]{\smash{{\SetFigFont{14}{16.8}{\familydefault}{\mddefault}{\updefault}{\color[rgb]{0,0,0}$\CC^*$}%
}}}}
\put(6041,-6532){\makebox(0,0)[lb]{\smash{{\SetFigFont{14}{16.8}{\familydefault}{\mddefault}{\updefault}{\color[rgb]{0,0,0}$\CC^* $}%
}}}}
\put(6796,-6541){\makebox(0,0)[lb]{\smash{{\SetFigFont{14}{16.8}{\familydefault}{\mddefault}{\updefault}{\color[rgb]{0,0,0}$\CC^*$}%
}}}}
\put(7426,-5911){\makebox(0,0)[lb]{\smash{{\SetFigFont{14}{16.8}{\familydefault}{\mddefault}{\updefault}{\color[rgb]{0,0,0}$\CC^*$}%
}}}}
\put(6031,-5911){\makebox(0,0)[lb]{\smash{{\SetFigFont{14}{16.8}{\familydefault}{\mddefault}{\updefault}{\color[rgb]{0,0,0}$\CC^* $}%
}}}}
\put(6751,-5911){\makebox(0,0)[lb]{\smash{{\SetFigFont{14}{16.8}{\familydefault}{\mddefault}{\updefault}{\color[rgb]{0,0,0}$\CC^* z$}%
}}}}
\put(8111,-6532){\makebox(0,0)[lb]{\smash{{\SetFigFont{14}{16.8}{\familydefault}{\mddefault}{\updefault}{\color[rgb]{0,0,0}$\CC^* z$}%
}}}}
\put(8111,-5925){\makebox(0,0)[lb]{\smash{{\SetFigFont{14}{16.8}{\familydefault}{\mddefault}{\updefault}{\color[rgb]{0,0,0}$\CC^*$}%
}}}}
\put(8866,-5281){\makebox(0,0)[lb]{\smash{{\SetFigFont{14}{16.8}{\familydefault}{\mddefault}{\updefault}{\color[rgb]{0,0,0}$\CC^*$}%
}}}}
\put(8866,-5911){\makebox(0,0)[lb]{\smash{{\SetFigFont{14}{16.8}{\familydefault}{\mddefault}{\updefault}{\color[rgb]{0,0,0}$\CC^*$}%
}}}}
\put(8866,-6541){\makebox(0,0)[lb]{\smash{{\SetFigFont{14}{16.8}{\familydefault}{\mddefault}{\updefault}{\color[rgb]{0,0,0}$\CC^*$}%
}}}}
\put(9496,-5911){\makebox(0,0)[lb]{\smash{{\SetFigFont{14}{16.8}{\familydefault}{\mddefault}{\updefault}{\color[rgb]{0,0,0}$\CC^*$}%
}}}}
\put(10181,-5925){\makebox(0,0)[lb]{\smash{{\SetFigFont{14}{16.8}{\familydefault}{\mddefault}{\updefault}{\color[rgb]{0,0,0}$\CC^* z$}%
}}}}
\put(10181,-6532){\makebox(0,0)[lb]{\smash{{\SetFigFont{14}{16.8}{\familydefault}{\mddefault}{\updefault}{\color[rgb]{0,0,0}$\CC^* z$}%
}}}}
\put(10936,-5281){\makebox(0,0)[lb]{\smash{{\SetFigFont{14}{16.8}{\familydefault}{\mddefault}{\updefault}{\color[rgb]{0,0,0}$\CC^*$}%
}}}}
\put(10936,-6541){\makebox(0,0)[lb]{\smash{{\SetFigFont{14}{16.8}{\familydefault}{\mddefault}{\updefault}{\color[rgb]{0,0,0}$\CC^* z$}%
}}}}
\put(10936,-5911){\makebox(0,0)[lb]{\smash{{\SetFigFont{14}{16.8}{\familydefault}{\mddefault}{\updefault}{\color[rgb]{0,0,0}$\CC^*$}%
}}}}
\put(11566,-5911){\makebox(0,0)[lb]{\smash{{\SetFigFont{14}{16.8}{\familydefault}{\mddefault}{\updefault}{\color[rgb]{0,0,0}$\CC^*$}%
}}}}
\put(13546,-5911){\makebox(0,0)[lb]{\smash{{\SetFigFont{14}{16.8}{\familydefault}{\mddefault}{\updefault}{\color[rgb]{0,0,0}$\CC^*$}%
}}}}
\put(12961,-5281){\makebox(0,0)[lb]{\smash{{\SetFigFont{14}{16.8}{\familydefault}{\mddefault}{\updefault}{\color[rgb]{0,0,0}$\CC^* z$}%
}}}}
\put(12961,-5911){\makebox(0,0)[lb]{\smash{{\SetFigFont{14}{16.8}{\familydefault}{\mddefault}{\updefault}{\color[rgb]{0,0,0}$\CC^*$}%
}}}}
\put(12206,-6532){\makebox(0,0)[lb]{\smash{{\SetFigFont{14}{16.8}{\familydefault}{\mddefault}{\updefault}{\color[rgb]{0,0,0}$\CC^* z$}%
}}}}
\put(12961,-6541){\makebox(0,0)[lb]{\smash{{\SetFigFont{14}{16.8}{\familydefault}{\mddefault}{\updefault}{\color[rgb]{0,0,0}$\CC^* z$}%
}}}}
\put(12196,-5911){\makebox(0,0)[lb]{\smash{{\SetFigFont{14}{16.8}{\familydefault}{\mddefault}{\updefault}{\color[rgb]{0,0,0}$\CC^* z$}%
}}}}
\end{picture}%
} \end {center}

\noindent If $\Gamma'$ is bounded, then there is only one region where the coefficient is in $\CC^* z$, and it is the bounded region, whose boundary bends at $v$ as shown in (a).  If $\Gamma'$ is unbounded then it is homeomorphic to $\RR$, with two unbounded directions.  If $\Gamma'$ does not bend at $v$, then it lifts to a \emph{string}, in the sense of \cite[Def. 3.5(a)]{GM08}, in the rational parameterizing curve $\widetilde \Gamma \rightarrow \Gamma$.  However, the rational parameterizing curve cannot contain any strings, by \cite[Remark 3.7]{GM08}.  Therefore, $\Gamma'$ bends at $v$, as shown in Figures (b)--(d), and hence the number of coefficients in $\inn_v{f}$ that are in $\CC^*$ is either 1 or 3, and the remaining coefficients are in $\CC^*z$.  It follows that $\inn_{v}f$ is a polynomial in $x$, $y$, and $z$ whose Newton polytope is a unimodular simplex of dimension 3.  We conclude that $\inn_{v} (\cC)$ is isomorphic to the intersection of a generic plane in $\PP^3$ with the dense torus.  Therefore $[\inn_v \cC] =  \LL^2 - 3\LL +3$ and $\chi_{-y}(\cC_\gamma) = y^2 - 3y + 3$.

\subsubsection{Computation of $\chi_{-y}(\cC_\Gamma)$}

We now use the computations in the four subcases above to compute $\chi_{-y}(\cC_\Gamma)$ and show that it is equal to $y$.

Say $\Delta$ has euclidean area $A/2$.  The Newton subdivision has one parallelogram of area 1, so it must contain $A-2$ unimodular triangles.  Therefore, $\Gamma$ has a unique 4-valent vertex and $A-2$ vertices that are 3-valent.  Since the union of the bounded edges has Euler characteristic zero, it follows that $\Gamma$ has $A-1$ bounded edges.  Finally, using Pick's formula and the fact that $\Delta$ has a unique interior lattice points, we see that $\Gamma$ has $A$ unbounded edges.

By the computations above, we conclude that
\[
\chi_{-y}(\cC_\Gamma) = (A - 1)(-y^2 + 2y - 1) + A(y-1) + (A-2)(y^2 - 3y+2) + y^2 - 3y + 3.
\]
Collecting terms gives $\chi_{-y}(\cC_\Gamma) = y$, as required.

\subsection{Case 2:  The tropical curve $\Gamma$ contains an edge of multiplicity 2 with a marked point.} \label{sec:withmarkedpoint}

We begin by observing that $\cC_\Gamma$ is the preimage in $\cC$ of a polyhedral subset of $\Trop(Y(\Delta) \times \PP^n)$ of the form $\overline \Gamma \times \{ \RR_{\geq 0} \cup \infty \}$.  To see this, first note that there is a unique concave function $\phi : \Delta \cap \ZZ^2 \rightarrow \QQ$ whose value at a fixed vertex $u_0$ is zero and such that the corner locus of the corresponding concave piecewise-linear function $\psi$ on $\RR^2$ given by
\[
\psi(v) = \min_{u \in \Delta \cap \ZZ^2} \langle u,v\rangle + \phi(u)
\]
is exactly $\Gamma$.  Since $\Gamma$ contains a bounded edge of multiplicity 2, the Newton subdivision must contain an edge of length 2 that contains the interior point in its relative interior.

Therefore, if $C \subset Y(\Delta)$ is the curve cut out by the equation
\[
f_C = \sum_{u \in \Delta \cap \ZZ^2} a_u x^u,
\]
with coefficients $a_{u} \in K$, then each $a_{u}$ other than the interior point must be in $K^*$ and if we normalize so that $a_{u_0} = 1$, then $\Trop(C \cap T)$ is equal to $\Gamma$ if and only if $\val(a_u) \geq \phi(u)$ for all $u$, with equality everywhere except possibly at the interior point.  In particular, $\cC_\Gamma$ is the preimage in $\cC$ of $\overline \Gamma \times \{ \RR_{\geq 0} \cup \infty \}$, where $\RR_{\geq 0} \cup \infty$ is identified with the set of points in $\Trop(\PP^n)$ whose $u$th coordinate is at least $\phi(u)$, with equality for all except the interior point.

We identify a face $\gamma$ of $\Gamma$ with the corresponding face $\gamma \times \{0\}$ in $\Trop(\cC_\Gamma)$, and we write
\[
\tilde \gamma = \gamma \times \RR_{\geq 0}.
\]
Just as in the previous case, we note that $\cC_\Gamma$ is the disjoint union, over all faces $\gamma$ of $\Gamma$ of the semialgebraic sets $\cC_\gamma \sqcup \cC_{\tilde \gamma}$.  We will show that $\inn_w \cC$ is smooth for all rational points $w$ in $\Gamma$, and that its isomorphic class is constant on the relative interiors of all the cells $\gamma$ and $\widetilde{\gamma}$.
  Then we can apply Proposition~\ref{prop:tropvol2} to compute $\chi_{-y}(\cC_\gamma)$ and $\chi_{-y}(\cC_{\tilde \gamma})$.

\subsubsection{The linear relations imposed by point conditions}  \label{sec:linearrelations2}
In this case, the edges of the Newton subdivision dual to the edges of $\Gamma$ that contain the marked points form a tree, whose vertices are all of the lattice points in $\Delta$ except the interior lattice point.

Since the edge of weight 2 contains a marked point, the tree contains the edge of lattice length 2.  In this case, the initial forms of the linear relations are different for $\gamma$ and for $\tilde \gamma$.  In $\tilde \gamma$, the coefficient of the interior lattice point vanishes in the initial form of the linear relations, which force all of the coefficients $\overline a_u$, for $u$ other than the interior point, to be fixed elements of $\CC^*$.  In $\gamma$, the coefficient of the interior lattice point does not vanish.  The linear relations force $\overline a_u$ to be a fixed nonzero scalar multiple of $\overline a_{u'}$ whenever $u$ is connected to $u'$ in this tree by a path that does not contain the edge of length 2.  In this case, we normalize so that the coefficient of the interior lattice point is 1, and the coefficients for the endpoints of the edge of length 2 are $z$ and $w$.  The linear relation imposed by the marked point on the edge of multiplicity 2 imposes a condition of the form $1 + az + bw = 0$, for some constants $a$ and $b$ in $\CC^*$.

We compute the contribution to $\chi_{-y}$ from each face of $\Gamma \times  \RR_{\geq 0}$, according to combinatorial type.  

\subsubsection{Subcase 2a: $\gamma$ is bounded edge of weight 1}  After a change of coordinates, we may assume $\inn_{\gamma}(f)$ is a linear combination of $1$ and $y$ with coefficients in $\CC^* z \sqcup \CC^* w$.  This equation, together with the linear relation $1 + az + bw$ from the point conditions, cuts out $\inn_{\gamma}(\cC)$ in the torus with coordinates $x$, $y$, $z$, and $w$.  It follows that $[\inn_\gamma (\cC)] = (\LL-1)(\LL-2)$ and $\chi_{-y}(\cC_\gamma) = -y^2 + 3y - 2$.

Similarly, we find that $\inn_{\tilde \gamma}(\cC)$ is cut out by a linear combination of $1$ and $y$ with coefficients in $\CC^*$, in the torus with coordinates $x$, $y$, and $z$.
We conclude that $[\inn_{\tilde \gamma}(\cC)] = (\LL-1)^2$ and $\chi_{-y}(\cC_{\tilde \gamma}) = -y+1$.

\subsubsection{Subcase 2b: $\gamma$ is an unbounded edge}  All unbounded edges have weight 1 and, just as for bounded edges, we find $[\inn_\gamma (\cC)] = (\LL-1)(\LL-2)$ and $[\inn_{\tilde \gamma}(\cC)] = (\LL-1)^2$.  Since $\dim \rec(\gamma) = 1$ and $\dim \rec(\tilde \gamma) = 2$, we then have $\chi_{-y}(\cC_\gamma) = y-2$ and $\chi_{-y}(\cC_{\tilde \gamma}) = 1$.

\subsubsection{Subcase 2c: $\gamma$ is a vertex that is not contained in the edge of weight 2}  After a change of coordinates, we may assume $\inn_{\gamma}(f)$ is a linear combination of $1$,  $x$, and $y$ with coefficients in $\CC^* z \sqcup \CC^* w$.  It follows that $[\inn_\gamma (\cC)] = (\LL-2)^2$ and $\chi_{-y}(\cC_\gamma) = y^2 - 4y + 4$.

Similarly, we find that $\inn_{\tilde \gamma}(\cC)$ is cut out by a linear combination of $1$, $x$, and $y$, with coefficients in $\CC^*$.  We conclude that $[\inn_{\tilde \gamma}(\cC)] = (\LL-1)(\LL-2)$ and $\chi_{-y}(\cC_{\tilde \gamma}) = y-2$.

\subsubsection{Subcase 2d: $\gamma$ is a vertex of the edge of weight 2}
After choosing coordinates, we may assume  $\inn_{\gamma} (f) = z + x + w x^2 + ay$, where $a \in \CC^* z$.  Substitute $w = (-1 -az)/b$ to get an equation in $x$, $y$, and $z$, whose Newton polytope is a pyramid over a trapezoid with height 1 and parallel edges of lengths 1 and 2.  The class of such a hypersurface is $\LL^2 - 3 \LL + 5$.  Then we need to subtract off the contribution from the locus where $z = -1/a$, which has class $\LL-2$. We conclude that $[\inn_\gamma(\cC)] = \LL^2 - 4\LL + 7$ and $\chi_{-y}(\cC_\gamma) =  y^2 - 4y + 7$.

Similarly, we find that $\inn_{\tilde \gamma} (f)$ is a linear combination of $1$, $x^2$, and $y$, with coefficients in $\CC^*$.  We conclude that $[\inn_{\tilde \gamma} (\cC)] = (\LL-1)(\LL-3)$ and $\chi_{-y}(\cC_{\tilde \gamma}) = y-3$.

\subsubsection{Subcase 2e: $\gamma$ is the edge of weight 2}  After
choosing coordinates, we may assume that
\[
\inn_{\gamma} (f) = z + x + w x^2.
\]
An explicit computation then shows that $[\inn_{\gamma} (\cC)] = \LL^2 - 6\LL + 5$ and hence $\chi_{-y}(\cC_\gamma)= -y^2 + 6y -5$.

Similarly, we find $\inn_{\tilde \gamma} (f)$ is a linear combination of $1$ and $x^2$ with coefficients in $\CC^*$.  We conclude that $[\inn_{\tilde \gamma} (\cC)] = 2 (\LL-1)^2$ and $\chi_{-y} (\cC_{\tilde \gamma}) = -2y+2$.

\subsubsection{Computation of $\chi_{-y}(\cC_\Gamma)$}

As in the previous case, Pick's formula and an Euler characteristic computation determine the number of faces of each type in terms of the euclidean area of $\Delta$.  Say $\Delta$ has euclidean area $A/2$.  Then $\Gamma$ has $A-4$ bounded edges of weight 1, $A$ unbounded edges, $A-4$ vertices that are not contained in the edge of weight 2, and $2$ vertices on that edge, in addition to the single edge of weight 2.  Applying Proposition~\ref{prop:tropvol} and combining terms for $\gamma$ and $\tilde \gamma$ in each case gives
\[
\begin{split}
\chi_{-y}(\cC_\Gamma)  =  \ &  (A-4) (-y^2 + 2y - 1) + A(y-1) + (A-4)(y^2 - 3y +2)  + \\ & +  2(y^2 - 3y +4) + (-y^2 + 4y -3).
\end{split}
\]
This simplifies to $y^2 + 2y + 1$, as required.

\subsection{Case 3: The tropical curve $\Gamma$ contains an edge of multiplicity 2 that does not contain a marked point} \label{sec:withoutmarkedpoint}

Suppose the edge of weight 2 does not contain a marked point.  Then the tree of edges in the Newton subdivision dual to edges with marked points does not contain the edge of lattice length 2.  In this case, the linear relations force all of the coefficients $\overline a_u$, for $u$ other than the interior point, to be fixed scalar multiples of each other.  In this case, we fix one of these to be $1$, and let $z$ be a variable for the coefficient of the interior lattice point.  The computations are then similar to the case above, but simpler.

\subsubsection{Subcase 3a: $\gamma$ is bounded edge of weight 1}
In this case, we find $[\inn_\gamma(\cC)]$ and $[\inn_{\tilde \gamma}(\cC)]$ are both equal to $(\LL-1)^2$.  Hence $\chi_{-y}(\cC_\gamma) = -y^2 + 2y -1$ and $\chi_{-y}(\cC_{\tilde \gamma}) = -y + 1$.

\subsubsection{Subcase 3b: $\gamma$ is an unbounded edge}
Again, $[\inn_\gamma(\cC)]$ and $[\inn_{\tilde \gamma}(\cC)]$ are both equal to $(\LL-1)^2$.  Accounting for the dimensions of the recession cones then gives $\chi_{-y}(\cC_\gamma) = y-1$ and $\chi_{-y}(\cC_{\tilde \gamma}) = 1$.

\subsubsection{Subcase 3c: $\gamma$ is a vertex that is not contained in the edge of weight 2}

In this case, we compute that $[\inn_\gamma(\cC)]$ and $[\inn_{\tilde \gamma}(\cC)]$ are both equal to $(\LL-1)(\LL-2)$.  Hence $\chi_{-y}(\cC_\gamma) = y^2 - 3y + 2$ and $\chi_{-y}(\cC_{\tilde \gamma}) = y - 2$.

\subsubsection{Subcase 3d: $\gamma$ is a vertex of the edge of weight 2}

In this case, $[\inn_\gamma(\cC)]$ and $[\inn_{\tilde \gamma}(\cC)]$ are equal to $\LL^2 - 3\LL + 4$ and $(\LL-1)(\LL-3)$, respectively.  Hence $\chi_{-y}(\cC_\gamma) = y^2 - 3y + 4$ and $\chi_{-y}(\cC_{\tilde \gamma}) = y - 3$.

\subsubsection{$\gamma$ is the edge of weight 2}

Here, we find that $[\inn_\gamma(\cC)]$ and $[\inn_{\tilde \gamma}(\cC)]$ are equal to $(\LL-3)(\LL-1)$ and $2(\LL-1)^2$, respectively.  Hence $\chi_{-y}(\cC_\gamma) = -y^2 + 4y -3$ and $\chi_{-y}(\cC_{\tilde \gamma}) = -2y + 2$.

\subsubsection{Computation of $\chi_{-y}(\cC_\Gamma)$}

The numbers of faces of each type do not depend on the location of the marked points, and hence are the same as in the previous case.  Summing over faces and combining terms for $\gamma$ and $\tilde \gamma$ produces
\[
\begin{split}
\chi_{-y}(\cC_\Gamma)  =  \ & (A - 4) (-y^2 +y) +  A y + (A-4)(y^2 - 2y) +   \\ & + 2(y^2 - 2y +1)   - y^2 + 2y - 1.
\end{split}
\]
This again simplifies to $y^2 + 2y + 1$, as required.

\subsection{Case 4: The tropical curve $\Gamma$ has a vertex of multiplicity 3}  \label{sec:specialvertex}

The computations in this case are similar to the previous two cases.  One minor difference is that some of the classes that appear as $[\inn_\gamma(\cC)]$ are not polynomials in $\LL$.  Nevertheless, the formulas for $\chi_{-y}$ are relatively simple to obtain.

We begin by observing that $\cC_\Gamma$ is again the preimage in $\cC$ of a polyhedral subset of $\Trop(Y(\Delta) \times \PP^n)$ of the form $\overline \Gamma \times \{ \RR_{\geq 0} \cup \infty \}$, and we write $\gamma$ and $\tilde \gamma$ for the faces $\gamma \times \{0\}$ and $\gamma \times \RR_{\geq 0}$ of $\Trop(\cC_\Gamma)$, respectively.

\subsubsection{The linear relations imposed by point conditions}  \label{sec:linearrelations4}
The edges of the Newton subdivision dual to edges that contain marked points form a tree on all vertices other than the interior vertex.  Therefore, the linear relations force all of the coefficients $\overline a_u$, for $u$ other than the interior point, to be fixed scalar multiples of each other.  In this case, we fix one of these to be $1$, and let $z$ be a variable for the coefficient of the interior lattice point.
\subsubsection{The initial degenerations}

In this case, $\Gamma$ has four combinatorial types of faces:  bounded edges, unbounded edges, ordinary vertices of multiplicity 1 (dual to unimodular triangles in the Newton subdivision), and one special vertex (dual to a triangle of area 3/2).

\subsubsection{Subcase 4a: $\gamma$ is a bounded edge}
If $\gamma$ is an edge, then $[\inn_\gamma(\cC)]$ and $[\inn_{\tilde \gamma}(\cC)]$ are both $(\LL-1)^2$.  When the edge is bounded, this gives $\chi_{-y}(\cC_\gamma) = -y^2 + 2y-1$ and $\chi_{-y}(\cC_{\tilde \gamma}) = -y + 1$.

\subsubsection{Subcase 4b: $\gamma$ is an unbounded edge}
Again, $[\inn_\gamma(\cC)]$ and $[\inn_{\tilde \gamma}(\cC)]$ are both $(\LL-1)^2$.  When the edge is unbounded this gives $\chi_{-y}(\cC_\gamma) = y-1$ and $\chi_{-y}(\cC_{\tilde \gamma}) = 1$.

\subsubsection{Subcase 4c: $\gamma$ is an ordinary vertex}
In this case, $[\inn_\gamma(\cC)]$ and $[\inn_{\tilde \gamma}(\cC)]$ are both equal to $(\LL-1)(\LL-2)$.  This gives $\chi_{-y}(\cC_\gamma) = y^2 - 3y +2$ and $\chi_{-y}(\cC_{\tilde \gamma}) = y - 2$.

\subsubsection{Subcase 4d: $\gamma$ is the special vertex}  In this case, $\inn_\gamma(\cC)$ is isomorphic to the complement in $\CC^* \times \CC^*$ of a smooth genus 1 curve minus 3 points.  The class of this variety is not a polynomial in $\LL$, but since $\chi_{-y}$ is additive and vanishes on smooth genus 1 curves, we find that $\chi_{-y}(\cC_\gamma) = \chi_{-y}(\inn_\gamma(\cC)) = y^2 - 2y +4$.
    Similarly, $\inn_{\tilde \gamma}(\cC)$ is isomorphic to the product of $\CC^*$ with a smooth genus 1 curve minus 3 points, and $\chi_{-y}(\cC_{\tilde \gamma}) = -3$.

\subsubsection{Computation of $\chi_{-y}(\cC_\Gamma))$}

As in the previous cases, Pick's formula tells us the number of faces of $\Gamma$ of each combinatorial type in terms of the area of $\Delta$.  In this case, if $\Delta$ has area $A/2$ then $\Gamma$ has $A-3$ bounded edges, $A$ unbounded edges, and $A-3$ ordinary vertices in addition to the 1 special vertex.  Summing over faces and combining terms for $\gamma$ and $\tilde \gamma$ then produces
\[
\begin{split}
\chi_{-y}(\cC_\Gamma) = & \  (A-3) (-y^2 +y) + Ay + (A-3) (y^2 - 2y) + y^2 - 2y + 1,
\end{split}
\]
which simplifies to give $\chi_{-y}(\cC_\Gamma) = y^2 + y + 1$, as required.  This completes the proof of Theorem~\ref{thm:genus1}.

\begin{rem}
The Block-G\"ottsche refined multiplicity associated to a tropical curve is always symmetric under the transformation $y \mapsto y^{-1}$.  Therefore, Conjecture~\ref{conj:rational} implies that $\chi_{-y} (\oJac(\cC_\Gamma))$ is invariant under the transformation $f (y) \mapsto y^g f(y^{-1})$, and we have confirmed that this is true for $g = 1$. However, in our proof of Theorem~\ref{thm:genus1}, we have expressed $\chi_{-y}(\oJac(\cC_\Gamma))$ as a sum of pieces that do not have this symmetry, and we do not know how to show that $\chi_{-y} (\oJac(\cC_\Gamma))$ has this symmetry in general.
\end{rem}

\section{Refined mulitiplicities for $\delta \leq 1$}

In this section, we extend the computations from the previous section to prove Conjecture~\ref{conj:chiydelta} for $\delta \leq 1$, in arbitrary genus.

\begin{thm}\label{thm:delta1}
Assume that $\delta$ is $0$ or $1$. Let $\Gamma\subset \RR^2$ be a tropical curve of genus $g-\delta$ and degree $\Delta$ through the points of $\trop(S)$.
Then the Block--G\"ottsche refined tropical multiplicity $N(\Gamma)$ is equal to
$ y^{-\delta} N_{\delta}(\cC_\Gamma)$.
\end{thm}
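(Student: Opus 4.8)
The plan is to reduce, via Theorem~\ref{thm:zeta}, to a motivic-volume computation of the kind carried out in Section~\ref{sec:genus1}. The case $\delta=0$ is immediate: then $|L|$ is a single reduced point, hence so is $|L|_\Gamma$, and Theorem~\ref{thm:zeta}\eqref{it:imply2} gives $N_0(\cC_\Gamma)=\chi_{-y}(N^{\mathrm{mot}}_0(\cC_\Gamma))=\chi_{-y}([\,|L|_\Gamma])=1=N(\Gamma)$. So assume $\delta=1$. By Theorem~\ref{thm:zeta}\eqref{it:imply2} we have $N^{\mathrm{mot}}_1(\cC_\Gamma)=[\cC_\Gamma]+(g-1)(\LL+1)[\,|L|_\Gamma]$; applying $\chi_{-y}$ and using $\chi_{-y}(\LL)=y$, it therefore suffices to prove
\[
\chi_{-y}(\cC_\Gamma)+(g-1)(y+1)\,\chi_{-y}(|L|_\Gamma)\;=\;y\,N(\Gamma).
\]
Since $N(\Gamma)=\prod_v M(v)$ with $M(v)=1$ at every trivalent vertex dual to a unimodular triangle, and since $\Gamma$ has genus $g-1$, the right-hand side is controlled by the single exceptional feature of $\Gamma$: a bounded edge of weight~$2$ (with or without a marked point on it), a trivalent vertex of multiplicity~$3$, or a unit parallelogram in the Newton subdivision dual to $\Gamma$ --- precisely the four situations of Cases~1--4 in Section~\ref{sec:genus1}, where $y\,N(\Gamma)$ equals $y$, $1+2y+y^2$, and $1+y+y^2$ in the loop, weight-$2$, and multiplicity-$3$ cases respectively.

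Both terms on the left will be computed by the tropical formulas of \S\ref{sec:tropcomp}. Embedding $\cC$ in $Y(\Delta)\times\PP^n$ via the universal equation~\eqref{eq:universal}, the set $\cC_\Gamma$ is, exactly as in Section~\ref{sec:genus1}, the preimage of a polyhedral subset $\overline\Gamma\times R\subset\Trop(Y(\Delta)\times\PP^n)$, where $R$ is cut out by $\val(a_u)=\phi(u)$ for each $u$ that is a vertex of the Newton subdivision dual to $\Gamma$, together with $\val(a_{u^\ast})\geq\phi(u^\ast)$ for the unique interior lattice point $u^\ast$ of $\Delta$ (if any) that is not such a vertex; likewise $|L|_\Gamma\subseteq|L|\cong\PP^1$ is the corresponding semialgebraic realization space. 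For each face $\gamma$ of $\Gamma$ one obtains the pieces $\cC_\gamma$ (where $\val(a_{u^\ast})=\phi(u^\ast)$) and, when $u^\ast$ exists, $\cC_{\widetilde\gamma}$ (where the inequality is strict). As in Section~\ref{sec:genus1}, the initial forms of the $n-1$ linear point conditions do not depend on the chosen face, so by \cite[Theorem~1.4]{OP13} all the relevant initial degenerations $\inn_w\cC$ are smooth and locally constant on relative interiors, and Proposition~\ref{prop:tropvol2} applies. The key point is that near the exceptional feature the combinatorics of the point conditions and the resulting initial degenerations are literally those analysed in Cases~1--4 of Section~\ref{sec:genus1} (those computations only used the exceptional cell, its vertices, the edges incident to it, and the global normalization of the pencil), while at every other face --- an ordinary trivalent vertex, a bounded edge of weight~$1$, or an unbounded edge --- the initial degeneration is again one of the standard toric-hypersurface pieces with the same contribution to $\chi_{-y}$.

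To finish, one counts the faces of each combinatorial type: writing $b$ for the number of boundary lattice points of $\Delta$, so that $\Gamma$ has $b$ unbounded edges and $2\,\mathrm{Area}(\Delta)=2g+b-2$ by Pick's theorem, and using that the parameterizing curve of $\Gamma$ is trivalent of first Betti number $g-1$ with $b$ unbounded ends, the numbers of faces of each type are affine-linear functions of $g$ and $b$. Summing the face contributions then expresses $\chi_{-y}(\cC_\Gamma)$ as a polynomial in $y$ whose $g$-dependence is a multiple of $(g-1)(y+1)$. A separate, entirely parallel tropical computation --- via Corollary~\ref{cor:tropvol}, from the initial degenerations of $|L|\subset\PP^n$ along $\Trop(|L|)$ --- evaluates $\chi_{-y}(|L|_\Gamma)$, and substituting into the displayed identity cancels exactly the $g$-dependent contributions, leaving $y\,N(\Gamma)$ as already computed for $g=1$ in Section~\ref{sec:genus1}. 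I expect the main obstacle to be precisely this bookkeeping: tracking how the $g-1$ extra interior lattice points, each now an ordinary vertex of the subdivision, contribute over all faces the exact multiple of $(g-1)(y+1)$ that is absorbed by the correction term $(g-1)(y+1)\chi_{-y}(|L|_\Gamma)$ supplied by Theorem~\ref{thm:zeta} --- a combinatorial verification more elaborate than, but of the same nature as, the closing summations of the four cases in Section~\ref{sec:genus1}.
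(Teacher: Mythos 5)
Your proposal is correct and takes essentially the same route as the paper's proof: the reduction via Theorem~\ref{thm:zeta}\eqref{it:imply2} to the identity $\chi_{-y}(\cC_\Gamma)+(g-1)(y+1)\chi_{-y}(|L|_\Gamma)=y\,N(\Gamma)$, the case division by the unique exceptional feature of $\Gamma$, the reuse of the genus-one initial-degeneration computations, the tropical evaluation of $\chi_{-y}(|L|_\Gamma)$ (which is $y-1$ in the first two cases and $y$ in the last two), and the Pick's-formula face counts are exactly the steps the paper carries out. The bookkeeping you defer is precisely what the paper does, and it confirms your structural prediction: in each case the $g$-dependent part of $\chi_{-y}(\cC_\Gamma)$ equals $-(g-1)(y+1)\chi_{-y}(|L|_\Gamma)$, so the identity reduces to the genus-one values $y$, $y^2+2y+1$, and $y^2+y+1$.
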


\noindent  The case $\delta=0$ is straightforward: $\cC$ is a single elliptic curve and $N(\Gamma)=N_{0}(\cC_\Gamma)=1$. Thus, we may assume that $\delta=1$.

We recall from Theorem~\ref{thm:zeta} that $N^{\mathrm{mot}}_1(\cC_\Gamma) = [\cC_\Gamma]+(g-1)(\LL+1)[\,|L|_{\Gamma}]$, and hence
\[
N_1(\cC_\Gamma) = \chi_{-y}(\cC_\Gamma) + (g-1)(y+1)(\chi_{-y}(|L|_\Gamma)).
\]
As in the case $g = 1$, we consider four cases, according to whether $\Gamma$ contains a 4-valent vertex, an edge of multiplicity 2 (with or without a marked point), or a vertex of multiplicity 3.  The computations are very similar to those in the corresponding cases for $g = 1$; the only differences are that the number of faces of each combinatorial type in $\Gamma$ depend on the genus $g$ as well as the area of $\Delta$, and we must also compute $\chi_{-y}(|L|_\Gamma)$.  (The term in our expression for $N_1(\cC_\Gamma)$ that involves $\chi_{-y}(|L|_\Gamma)$ vanishes when $g = 1$, since it appears with coefficient divisible by $(g-1)$.)

\subsection{Case 1: The tropical curve $\Gamma$ contains a 4-valent vertex}

Our computations in this case are very similar to those in Section~\ref{sec:loop}.  Every lattice point in $\Delta$ is a vertex of the Newton subdivision, and the set of edges in the Newton subdivision dual to edges of $\Gamma$ that contain marked points form a disjoint union of two trees that together contain all lattice points.  As in Section~\ref{sec:loop}, this means that every coefficient of $f$ must be nonzero, and normalizing so that one coefficient is 1 determines the valuation of all of the others.  These valuations determine a point in $\Trop(|L|)$, and the base of the family $|L|_\Gamma$ is the preimage of this point under tropicalization.  The initial degeneration of $|L|$ at this point is cut out by the initial forms of the linear relations imposed by point conditions, as discussed in Section~\ref{sec:linearrelations}.  These initial forms cut out a translate of a one parameter subgroup in the dense torus in $\PP^n$.  In particular, the initial degeneration is smooth and isomorphic to $\G_m$.  Applying Proposition~\ref{prop:tropvol} then shows $\Vol(|L|_\Gamma) = \LL - 1$ and hence $\chi_{-y}(|L|_\Gamma) = y-1$.   It remains to show that $\chi_{-y}(\cC_\Gamma) = y - (g-1)(y^2-1)$.



Just as in Section~\ref{sec:loop}, $\cC_\Gamma$ is the preimage under tropicalization of $\overline \Gamma$ and all initial degenerations $\inn_\gamma(\cC_\Gamma)$ are smooth.  Moreover, the computation of the classes of the initial degenerations in each combinatorial subcase are exactly the same.  The only remaining difference is the number of faces of each combinatorial type.

Applying Pick's formula and using the fact that $\Delta$ has $g$ interior lattice points, we find that $\Gamma$ has $A-2+g$ bounded edges, $A + 2 -2g$ unbounded edges, $A-2$ vertices of valence 3, and one vertex of valence 4.  This gives
\[
\begin{split}
\chi_{-y}(\cC_\Gamma) = \ & (A - 2+g)(-y^2 + 2y - 1) + (A+2-2g)(y-1) + \\ & + (A-2)(y^2 - 3y+2) + y^2 - 3y + 3,
\end{split}
\]
which simplifies to $y - (g-1)(y^2-1)$, as required.

\subsection{Case 2: The tropical curve $\Gamma$ contains an edge of weight 2 with a marked point}

Our computations in this case are very similar to those in Section~\ref{sec:withmarkedpoint}.  The base of the family is the preimage under tropicalization of the closure of a ray $\RR_{\geq 0}$ in $\Trop(\PP^n)$ (i.e. the projection of the space $\Gamma \times \RR_{\geq 0}$ considered in Section~\ref{sec:withmarkedpoint}).  The initial degeneration of $|L|$ at the vertex of this ray is isomorphic to the subvariety of $\G_m^2$ (with coordinates $z$ and $w$) cut out by an equation $1 + az + bw = 0$ for some nonzero scalars $a$ and $b$ (see the discussion of linear relations imposed by the point conditions in Section~\ref{sec:linearrelations2}).  In particular, the class of this degeneration is $\LL - 2$.  The initial degeneration along the interior of the ray is isomorphic to $\G_m$.  Applying Proposition~\ref{prop:tropvol2} then gives $\Vol(|L|_\Gamma) = \LL - 1$ and hence $\chi_{-y}(|L|_\Gamma) = y-1$.  It remains to show that $\chi_{-y}(\cC_\Gamma) = y^2 + 2y + 1 - (g-1)(y^2 - 1)$.

We find that $\Gamma$ contains $A - 5 + g$ bounded edges of weight 1, $A + 2 - 2g$ unbounded edges, and $A-4$ vertices that are not in the edge of weight 2, in addition to the 2 vertices on the edge of weight 2, and the edge of weight 2.  The computations of the initial degenerations in each combinatorial subcase are unchanged.  This gives
\[
\begin{split}
\chi_{-y}(\cC_\Gamma)  =  \ &  (A-5+g) (-y^2 + 2y - 1) + (A+ 2 - 2g)(y-1) + \\ &+ (A-4)(y^2 - 3y +2)  +  2(y^2 - 3y +4) + (-y^2 + 4y -3).
\end{split}
\]
This formula simplifies to $y^2 + 2y + 1 - (g-1)(y^2-1)$, as required.

\subsection{Case 3: The tropical curve $\Gamma$ contains an edge of weight 2 without a marked point}

In this case again, $|L|_\Gamma$ is the preimage under tropicalization of the closure of a ray.  However, now the initial degeneration at every point, including the vertex, is isomorphic to $\G_m$.  Applying Proposition~\ref{prop:tropvol2} gives $\Vol(|L|_\Gamma) = \LL$ and $\chi_{-y}(|L|_\Gamma) = y$.  It remains to show that $\chi_{-y}(\cC_\Gamma) = y^2 + 2y + 1 - (g-1)(y^2 + y)$.

As in the previous case $\Gamma$ contains $A -  5 + g$ bounded edges of weight 1, $A + 2 - 2g$ unbounded edges, and $A-4$ vertices that are not in the edge of weight 2, in addition to the 2 vertices on the edge of weight 2, and the edge of weight 2.  The initial degenerations in each combinatorial subcase are just as in Section~\ref{sec:withoutmarkedpoint}.  This gives
\[
\begin{split}
\chi_{-y}(\cC_\Gamma)  =  \ & (A - 5 + g) (-y^2 +y) +  (A + 2 -2g) y + (A-4)(y^2 - 2y) +   \\ & + 2(y^2 - 2y +1)   - y^2 + 2y - 1,
\end{split}
\]
which simplifies to $y^2+ 2y + 1 - (g-1)(y^2 + y)$, as required.

\subsection{Case 4: The tropical curve $\Gamma$ contains a vertex of multiplicity 3}

In this case, a computation identical to that in the previous case shows $\chi_{-y}(|L|_\Gamma) = y$.  It remains to show that $\chi_{-y}(\cC_\Gamma) = y^2 + y + 1 - (g-1)(y^2 + y)$.

We find that $\Gamma$ contains $A - 4 + g$ bounded edges, $A+ 2 - 2g$ unbounded edges, $A-3$ ordinary vertices, and the 1 special vertex.  The initial degenerations are exactly as in Section~\ref{sec:specialvertex}.  This gives
\[
\chi_{-y}(\cC_\Gamma) =  (A-4+g) (-y^2 +y) + (A+2-2g)y + (A-3) (y^2 - 2y) + y^2 - 2y + 1,
\]
which simplifies to $y^2+y+1 - (g-1)(y^2+y)$, as required.  This completes the proof of Theorem~\ref{thm:delta1}.

\bibliographystyle{amsalpha}
\bibliography{RefinedCurveCounting}

\end{document}